\DeclareMathOperator{\E}{\mathbb E}
\newenvironment{customlegend}[1][]{%
    \begingroup
    \csname pgfplots@init@cleared@structures\endcsname
    \pgfplotsset{#1}%
}{%
    \csname pgfplots@createlegend\endcsname
    \endgroup
}%
\def\addlegendimage{\csname pgfplots@addlegendimage\endcsname}
\newtheorem{theorem}{Theorem}
\newtheorem{lemma}[theorem]{Lemma}
\newtheorem{proposition}[theorem]{Proposition}
\newtheorem{corollary}[theorem]{Corollary}
\newtheorem*{theorem*}{Theorem}
\theoremstyle{remark}
\newtheorem{example}[theorem]{Example}
\newtheorem{remark}{Remark}
\newtheorem*{remark*}{Remark}
\def\P{\mathrm{P}}
\def\E{\mathrm{E}}
\def\Var{\mathrm{Var}}
\def\Cov{\mathrm{Cov}}
\newcommand{\bs}{\boldsymbol}
\newcommand{\vb}{\vspace{3.2mm}}
\newcommand{\apost }{'}
\newcommand{\darrow}{\:\to_{\rm d}\:}
\newcommand{\asarrow}{\:\to_{\rm as}\:}
\newcommand*\diff{\mathop{}\!\mathrm{d}}
\newcommand{\floor}[1]{\left\lfloor #1 \right\rfloor}
\title[Hypothesis testing for a L\'evy-driven storage system]{Hypothesis testing for a L\'evy-driven \\storage system by Poisson sampling\footnote{To appear in Stochastic Processes and their Applications}}
\author[M. Mandjes and L. Ravner]{M. Mandjes and L. Ravner}
\date{\today}
\begin{document}

\begin{abstract} This paper focuses on hypothesis testing for the input of a L\'evy-driven storage system by sampling of the storage level. As the likelihood is not explicit we propose two tests that rely on transformation of the data. The first approach uses i.i.d. `quasi-busy-periods' between observations of zero workload. The distribution of the duration of quasi-busy-periods is determined. The second method is a conditional likelihood ratio test based on the Bernoulli events of observing a zero or positive workload, conditional on the previous workload. Performance analysis is presented for both tests along with speed-of-convergence results, that are of independent interest.
\vspace{3.2mm}

\noindent
{\sc Keywords.} L\'evy-driven storage system $\circ$ Poisson sampling $\circ$ hypothesis testing $\circ$ convergence to stationarity

\vb

\noindent
{\sc Affiliations.} 
{\it Michel Mandjes} is with Korteweg-de Vries Institute for Mathematics, University of Amsterdam, Science Park 904, 1098 XH Amsterdam, the Netherlands. He is also with E{\sc urandom}, Eindhoven University of Technology, Eindhoven, the Netherlands, and Amsterdam Business School, Faculty of Economics and Business, University of Amsterdam, Amsterdam, the Netherlands. {\it Liron Ravner} is with the Department of Statistics in the University of Haifa, Israel, as well as Korteweg-de Vries Institute for Mathematics, University of Amsterdam, Science Park 904, 1098 XH Amsterdam, the Netherlands. Both authors' research is partly funded by NWO Gravitation project N{\sc etworks}, grant number 024.002.003.

\vb

\noindent
{\sc Acknowledgments.} The authors would like to thank an anonymous referee for his/her useful feedback and comments. The authors are also grateful to O.\ Kella (Hebrew University, Jerusalem, Israel) for his comments and for the fruitful discussions during the writing of this paper. 

\vb

\end{abstract}

\maketitle

\newpage

\maketitle
%

\section{Introduction}\label{sec:intro}
In statistical hypothesis testing, one wishes to distinguish between a null hypothesis H$_0$ and an alternative hypothesis H$_1$ by observing a series of random variables. In the common setup the hypotheses directly relate to the observations: for instance, observing a series of random variables, the hypotheses could correspond to these observations stemming from two specific distributions. In the statistical literature a vast body of results has been established that facilitate such tests. In many applications, however, the hypotheses relate to the observations in a more involved manner. A prominent example of such a situation lies in the domain of storage systems and queues: one has periodic observations of the storage level, but the hypotheses are in terms of the system's input process. For instance, by periodically observing the workload, one would like to distinguish between two values of the arrival rate. While some work on hypothesis testing for this context has been done, a general framework is still lacking, and, as a consequence, various open questions remain.

In this paper we consider the workload process of a rather broad set of storage systems. We focus on the situation of a resource that is fed by an increasing L\'evy process (often referred to as a `subordinator') which is uniquely characterized by its Laplace exponent $\varphi(\cdot)$, and that is emptied at a deterministic rate. This class of workload models covers the intensively studied storage system with compound Poisson input (often referred to as the M/G/1 queue), but it allows the driving L\'evy process to be any subordinator (for instance a Gamma process or an inverse Gaussian process).

The main objective of this paper is to develop methods for distinguishing between two characteristic exponent functions, say $\varphi_0(\cdot)$ and $\varphi_1(\cdot)$, based on observations of the corresponding workload process, rather than on observations of the L\'evy input processes themselves. It is assumed that the workload is observed at Poisson instants. A major complication is that, although in our L\'evy-input context the increments of the cumulative input process are independent, subsequent workload observations are {\it not}, so that the likelihood cannot be evaluated and consequently conventional tests cannot be applied. 

\vb

\noindent{\em Contributions.} 
As mentioned, this paper develops tests for distinguishing between Laplace exponents $\varphi_0(\cdot)$ and $\varphi_1(\cdot)$ based on workload observations. We present two approaches that succeed in resolving the complications identified above. A key feature of both approaches is that the sequential test has power one; if the null hypothesis is wrong, then this will be detected with probability one as long as there is no restriction on the number of observations. However, a type-I error is still possible, i.e., a false rejection of the null hypothesis. To assess the performance of the test, one wishes to compute (or approximate) the type-I error probability.\begin{itemize}
\item[$\circ$]
In the first approach we consider the so-called `quasi busy period', which is defined as the number of Poisson observations until the workload hits 0 again. It is an inherent feature of the model that subsequent quasi busy periods are i.i.d. As a consequence, a traditional likelihood ratio test can be used. To evaluate the likelihood, the distribution of the duration of quasi busy periods should be determined; we point out how this can be done. In addition, we present results that assess the performance of the test. 
\item[$\circ$]The second approach works with a conditional likelihood ratio test. It is based on the fact that we can explicitly compute the probability of the events of observing a zero or positive workload, conditional on the value of the previous observation. In this setup the increments of the log-likelihood are not i.i.d., so that a rather delicate analysis is needed to analyze the performance of the test.
\end{itemize}
To quantify the performance of the test pertaining to the second approach, we require a number of results describing the convergence of a L\'evy-driven storage system to its stationary version. To this end, we derive a series of speed-of-convergence results, that are also of independent interest. 

\vb

\noindent{\em Background and related literature.} 
In the situation we are considering the systems's input is a non-decreasing L\'evy process minus a deterministic drift, while the workload is sampled according at Poisson epochs. This setup has been considered before in \cite{RBM2019}, where a method was developed for consistent and asymptotically normal semi-parametric estimation of the Laplace exponent, based on workload observations at Poisson epochs. The present paper can be seen as the hypothesis testing counterpart of \cite{RBM2019}.

The two approaches we pursue are attempts to remain as closely as possible to conventional likelihood ratio tests, both in terms of the evaluation of the likelihood and the assessment of the test's performance. However, as pointed out above, due to workload's intricate dynamics, various complications need to be overcome. For more general background on hypothesis testing we refer to the textbooks \cite{book_S2013,book_Y2013}. Our methods naturally extend to a stability test for a queue with unknown input as well as changepoint detection.

As mentioned above, in our storage system setting a general theory for hypothesis testing is lacking. A review of the classical literature on this topic is given in \cite[Section 5]{BR1987}. Without attempting to provide an exhaustive overview, we mention a few specific references. If Markov chains can be embedded in the queueing process (which is the case in e.g.\ systems of the type M/G/1 and GI/M/$s$), and if the corresponding jump process is observed, the likelihood can be evaluated in closed-form \cite[Section 5.e]{BR1987}; cf.\ the sequential test proposed in \cite{BR1972}. A large sample asymptotic test for the traffic intensity of a G/G/$s$ system in which interarrival and service times are observed, is presented in \cite{RH1986}; relying on a delta-method type argument the authors construct a normal approximation for the error probabilities of the test. A test based on the distribution of the number of arrivals during a service period is proposed in \cite{RBH1984}. 

\vb

\noindent{\em Organization.} This paper is organized as follows. Section \ref{sec:model} describes the setup considered in this paper, presents some preliminaries on L\'evy-driven storage systems and hypothesis testing, and formally states our objectives. Then in Section \ref{sec:QBP} we detail the quasi-busy-period based approach, including the analysis of its performance. Then, in Section \ref{sec:converge} we provide a series of results on convergence of the storage-level process to its stationary version, which are applied in Section \ref{sec:cond_LRT} but which are relevant in their own right as well. Then Section \ref{sec:cond_LRT} presents the conditional likelihood ratio approach and its performance analysis. Numerical experiments are described in Section \ref{sec:simulation}, while Section \ref{sec:DCR} provides a brief discussion and some concluding remarks.

\section{Model, preliminaries, objectives}\label{sec:model}
In this section we provide a model description (and introduce the notation that will be used throughout the paper), present some preliminaries on L\'evy-driven storage systems, and state our objectives.

\subsection{Model} We consider a storage system fed by a non-decreasing L\'evy process $(J(t))_{t\geqslant 0}\equiv J(\cdot)$. The output of the system is a unit-rate linear drift. The system's {\it net} input process $(X(t))_{t\geqslant 0}\equiv X(\cdot)$ is therefore given through $X(t)=J(t)-t$, which is a spectrally-positive L\'evy process characterized by its Laplace exponent
\begin{equation*}
\varphi(\alpha):=\log\E  {\rm e}^{-\alpha X(1)}=\alpha-\int_{(0,\infty)}(1- {\rm e}^{-\alpha x})\nu(\diff x)\ ,
\end{equation*}
where $\nu$ is a L\'evy jump measure such that $\nu(-\infty,0)=0$. 

In the sequel we let $(V(t))_{t\geqslant 0}\equiv V(\cdot)$ denote the corresponding workload process. It can be represented as the net input process reflected at zero, in that $V(t)=X(t)+\max\{V(0),L(t)\}$, where $L(t):=-\inf_{0\leqslant s\leqslant t}X(s)$. 
Under the stability condition $\varphi'(0)=-\E X(1)>0$ the workload has a stationary distribution $V:=V(\infty)$ with an LST given by the {\it generalized Pollaczek-Khintchine} formula \cite[p.\ 27]{book_DM2015}:
\begin{equation}\label{eq:GPK}
\E  {\rm e}^{-\alpha V}=\frac{\alpha \varphi\apost (0)}{\varphi(\alpha)}\ ;
\end{equation}
otherwise the workload process is unstable, meaning that $V(\infty)=\infty$ almost surely. The first two moments of the stationary distribution are given by
\begin{equation}\label{eq:V_moments}
\E V = \frac{\varphi^{(2)}(0)}{2\varphi\apost(0)}\ , \ \E V^2 = \frac{1}{2}\left( \frac{\varphi^{(2)}(0)}{\varphi\apost(0)}\right)^2- \frac{\varphi^{(3)}(0)}{3\varphi\apost(0)} \ ,
\end{equation}
where
\[
\varphi^{(k)}(0)=\lim_{\alpha \downarrow 0}\frac{{\rm d}^k}{{\rm d}\alpha^k}\varphi(\alpha)=(-1)^k\int_{(0,\infty)}x^k\nu( \diff x)\ ,k\geqslant 2 \ .
\]
Let $\rho:=\E J(1)=\int_{(0,\infty)}x\nu(\diff x)$ denote the expected input per unit of time. Then the expected net input per unit of time is
\[
\E X(1)=-\varphi\apost(0)=-1+\int_{(0,\infty)}x\nu( \diff x)=\rho-1\ .
\]
Therefore, $\E X(1)<0$ is equivalent to $\rho<1$, which is the typical form of the stability condition in queueing theory. 

\vb

\begin{example} (M/G/1 system)
An important special case corresponds to the input process being compound Poisson, with arrival rate $\lambda$ and i.i.d.\ job sizes $B_1,B_2,\ldots$ that are distributed as a generic non-negative random variable $B$ with distribution function $G(\cdot)$. In this case we have that $\nu(\diff x)=\lambda G(\diff x)$, $\varphi(\alpha)=\lambda\left(G^\star(\alpha)-1\right)+\alpha$, where $G^\star(\alpha):=\E  {\rm e}^{-\alpha B}$, and $\rho=\lambda\,\E B$. As is well known, $\rho<1$ is a necessary and sufficient condition for the stability of the system. \hfill $\Diamond$
\end{example}

\subsection{Poisson sampling} In this paper the workload process will be sampled at Poisson epochs. Our analysis strongly relies on the availability of explicit expressions of the workload after an exponentially distributed time as a function of the initial workload level.

The workload process starts, at time $0$, at some known level $V(0)$. It is observed according to an independent Poisson process with rate $\xi>0$. Let $T_1,T_2,\ldots$ be i.i.d.\ exponentially distributed with parameter $\xi>0$. We denote by $S_k:=T_1+\ldots+ T_k$ the epoch of the $k$-th observation, i.e., $S_k$ has an Erlang distribution with scale parameter $k$ and and shape parameter $\xi$. From now on we use the compact notation $V_k:= V(S_k)$ to denote the workload process at the $k$-th Poisson epoch, and in addition $V_0:=V(0)$. 

If the input process is spectrally positive, then the distribution of the workload at sample $i\in \{1,2,\ldots\}$ conditional on the workload at sample $i-1$ is characterized through
\begin{equation}\label{eq:V_LST_transient}
\E\left[ {\rm e}^{-\alpha V_i}\,|\,V_{i-1}\right]=\frac{\xi}{\xi-\varphi(\alpha)}\left( {\rm e}^{-\alpha V_{i-1}}-\frac{\alpha}{\psi(\xi)} {\rm e}^{-\psi(\xi)V_{i-1}}\right)\ ,
\end{equation}
where $\psi(x):=\varphi^{-1}(x)$; see e.g.\ \cite{KBM2006} and \cite[Ch. IV]{book_DM2015}. If, furthermore, the input process of the queue is a subordinator, then the workload process attains the value zero with positive probability. This probability can be computed by taking $\alpha\to\infty$ in \eqref{eq:V_LST_transient}:
\begin{equation}\label{eq:P_idle_transient}
\P(V_i=0\,|\,V_{i-1}=v)=\frac{\xi}{\psi(\xi)} {\rm e}^{-\psi(\xi)v}\ .
\end{equation}
This probability will be the main building block of the conditional likelihood ratio test that will be introduced in Section \ref{sec:cond_LRT}. In Section \ref{sec:QBP} we will present new results on the distribution of the number of samples taken between consecutive observations of zero workload.

Note that when $\xi\downarrow 0$, if $\rho<1$ then $\psi(\xi)\to 0$ whereas otherwise $\psi(\xi)\to \psi(0)>0$. If $\rho<1$ then, for all $v\geqslant 0$,
\[
\frac{\xi}{\psi(\xi)} {\rm e}^{-\psi(\xi)v}\xrightarrow{\xi\downarrow 0} 1-\rho \ .
\]
This relation has an intuitive backing: when decreasing the sampling rate the events of finding an idle server become `increasingly independent', and therefore the probability approaches the steady-state  idle-server probability $1-\rho$.

\subsection{Hypothesis testing} 
Our goal is to provide a framework for testing hypotheses related to the distribution of the net input of the queue: with some abuse of notation,
\begin{equation}\label{T1}
\begin{array}{cc}
{\rm H}_0: & (X(t))_{t\geqslant 0}=_{\rm d} \varphi_0(\cdot) \ , \\
{\rm H}_1: & (X(t))_{t\geqslant 0}=_{\rm d} \varphi_1(\cdot)\ .
\end{array} 
\end{equation}
An important special case concerns tests that correspond to the traffic intensity:
\[
\begin{array}{cc}
{\rm H}_0: & \E X(1)=-\varphi_0\apost(0) \ , \\
{\rm H}_1: & \E X(1)=-\varphi_1\apost(0)\ ,
\end{array} 
\]
and in particular detecting stability if $\varphi_0\apost(0)>0$ and $\varphi_1\apost(0)\leqslant 0$. {Note that even in the case of stability detection the tests presented here require full characterization of the distributions corresponding to ${\rm H}_0$ and ${\rm H}_1$. In other words, the test can distinguish between specific input distributions (or a collection of such distributions) that yield a stable or unstable workload process. In Section \ref{sec:DCR} we describe a possible extension to a framework allowing for composite hypotheses, and such an extension can also be useful for the stability detection problem.} Another objective concerns providing a procedure for changepoint detection. In this context the system may start with Laplace exponent $\varphi_0(\cdot)$, but a change may occur at a random time after which the Laplace exponent becomes $\varphi_1(\cdot)$. The goal is to identify if (and when) this change occurs. 

In many applications one is interested in a dynamic test as observations are collected, as opposed to a static test with a given sample of size $n$. Then the test is defined by a test statistic $g_n(V_0,\ldots,V_n)$, in combination with two disjoint decision sets, $\mathscr{G}_0$ and $\mathscr{G}_1$. Let \[N_k:=\inf\{n: \ g_n(V_0,\ldots,V_n)\in \mathscr{G}_k\}\]for $k\in\{0,1\}$. The stopping rule is: collect samples until the stopping time $N:=\min\{N_0,N_1\}$, and reject (accept, respectively) the null hypothesis if $N_1<N_0$ ($N_0<N_1$, respectively). As usual, there are two types of possible errors: the type-I error is quantified as $\alpha:={\rm P}_{{\rm H}_0}(N_1<N_0)$, whereas the type-II error is $1-\pi:={\rm P}_{{\rm H}_1}(N_1>N_0)$, where $\pi$ is typically referred to as the power of the test. In some cases the sampling is stopped only if the null hypothesis is rejected, for example in the context of change-point detection. If $\P_{{\rm H}_1}(N<\infty)=1$, then the test is called a power-one sequential test. The significance level of the test is $\P_{{\rm H}_0}(N<\infty)$. 

The most common approach for hypothesis testing is the likelihood-ratio-test (LRT): reject ${\rm H}_0$ if $L_n\geqslant x$ for some $x>0$, where
\[
L_n:= \frac{\P_{{\rm H}_1}(V_{1},\ldots, V_n)}{\P_{{\rm H}_0}(V_{1},\ldots, V_n)}\ .
\]
This test is proven to have certain optimal properties, such as having the highest power for any value of $\alpha$. In our setting, however, serious complications arise, as we do not have closed forms expressions for the likelihoods $\P_{{\rm H}_i}(V_{1},\ldots, V_n)$, $i=0,1$. This is a consequence of the fact that the observations $V_i$ are in general not identically distributed (as we do not necessarily start at time $0$ with a stationary workload), and, more importantly, have a rather elaborate dependence structure. Therefore, if we would like to apply the above LRT, then a first idea would be to rely on numerical or approximate techniques to evaluate the likelihoods $\P_{{\rm H}_i}(V_{1},\ldots, V_n)$. However, the remedies that we propose, and which will be described in detail later, are of a different nature: we transform the observations $V_{1},\ldots, V_n$ into a vector of which we can compute the likelihood, effectively bringing us back into a classical LRT framework.

\begin{remark}\label{R1}
A (na\"{\i}ve) benchmark test on the mean rate generated by the driving L\'evy process is the following. Perform a simple threshold test on the average workload: reject ${\rm H}_0$ if $n^{-1}\sum_{i=1}^nV_i \geqslant x$ for a suitably chosen $x$. In case one is interested in detecting stability, then this is a power-one test as in the unstable setting the average workload will grow unbounded. However, if both hypotheses correspond to stable queues, then there is a positive probability of a type-II error (but this can be controlled by choosing the test's parameters appropriately). Large-sample asymptotics for the test statistic are readily available, such as those in e.g.\ \cite{GMW1993}: after centering and normalizing by $\sqrt{n}$ it converges to a normal random variable. This CLT could be used for an approximative likelihood-ratio test corresponding to i.i.d.\ normal random variables, but this would ignore the dependence between the observations. The aim of this paper is to develop new tests that combine the advantages of the likelihood-ratio test with tractable methods for quantifying its performance. We get back to the na\"{\i}ve test described above in the numerical analysis of Section \ref{sec:simulation}.\hfill$\Diamond$
\end{remark}

\section{Approach I: quasi busy periods}\label{sec:QBP}
As mentioned in the previous section, the approach we take is to transform the vector of workload observations (performed at Poisson instances) into an alternative vector for which LRT-type tests can be performed. In this section we focus on using the {\it quasi busy period} (QBP), being the number of observations until the workload hits zero again. Evidently, these QBPs constitute a sequence of i.i.d.\ random variables. In Section \ref{S31} we derive distributional properties of the QBP, which enable us to evaluate the likelihood. Section \ref{S32} then describes how the LRT should be set up.

\subsection{Distribution of quasi busy period}\label{S31}
In this section we concentrate on the random quantity, conditional on $V_0=0$,
\[
R:= \inf\{k\in{\mathbb N}: V_k = 0, V_0=0\}\ ;
\] 
$R$ thus records how many Poisson epochs it takes until an empty buffer is observed again. The objective of this subsection is to devise a procedure that facilitates the computation of the distribution of $R$, through the probabilities
\[
r_k(\xi) := \P(R=k\,|\,V_0=0)\ .
\]
We do so by first computing the probabilities {$p_k(\xi) :=\P(V_k=0|V_0=0)$,} after which we express the $r_k(\xi)$ in terms of the $p_k(\xi)$. 

Let $S_k$ be an Erlang random variable with scale parameter $n$ and shape parameter $\xi$, i.e., the sum of $n$ i.i.d.\ exponentially distributed random variables with mean $\xi^{-1}.$ We first point out how to compute, with $f_{S_k}(\cdot)$ denoting the density of $S_k$,
\begin{align*}
p_k(\xi) &={\P(V_k=0\,|\,V_0=0)}= \int_0^\infty f_{S_k}(t) \,\P(V(t)=0\,|\,V(0)=0)\diff t\\
&= \int_0^\infty \frac{\xi^k t^{k-1}}{(k-1)!} \,{\rm e}^{-\xi t} \,\P(V(t)=0\,|\,V(0)=0)\diff t\ .
\end{align*}
To this end, observe that
\begin{align*}
p_k(\xi) &=- \frac\diff {\diff \xi} \int_0^\infty \frac{\xi^k t^{k-2}}{(k-1)!} \,{\rm e}^{-\xi t}\, \P(V(t)=0\,|\,V(0)=0)\diff t\:+\\&\:\:\:\:\:\hspace{2.1cm} \int_0^\infty \frac{k\xi^{k-1} t^{k-2}}{(k-1)!} \,{\rm e}^{-\xi t}\, \P(V(t)=0\,|\,V(0)=0)\diff t\ .
\end{align*}
In other words, we obtain the recursion, for $k=2,3,\dots$,
\[
p_k(\xi) =-\frac\diff {\diff \xi}\left(\frac{\xi}{k-1}\,p_{k-1}(\xi)\right)+\frac{k}{k-1}\, p_{k-1}(\xi)
=p_{k-1}(\xi)-\frac{\xi}{k-1}\,p'_{k-1}(\xi)\ .
\]
The initialization of the recursion follows from $p_1(\xi) = \xi/\psi(\xi).$ The next term is
\[
p_2(\xi)= p_1(\xi)-\xi p_1'(\xi)= \frac{\xi}{\psi(\xi)} -\xi\frac{\psi(\xi) -\xi\psi'(\xi)}{\psi^2(\xi)}=\left(\frac{\xi}{\psi(\xi)}\right)^2\psi'(\xi)\ .
\]
This recursion can be further expanded, so as to obtain the following result. 
\begin{proposition} \label{P1} 
For $k\in{\mathbb N}$,
\begin{equation} \label{E1} 
p_k(\xi) =
\sum_{\ell=0}^{k-1}\frac{(-\xi)^\ell}{\ell!} p_1^{(\ell)}(\xi)\ ,
\end{equation}
where
\[
p_1^{(\ell)}(\xi)=\xi \,\varrho_\ell(\xi) +\ell \,\varrho_{\ell-1}(\xi),\:\:\:\:
\varrho_\ell(\xi) :=
\frac{\diff ^\ell}{\diff \xi^\ell}\left(\frac{1}{\psi(\xi)}\right)\ .
\]
In addition, $\varrho_\ell(\xi)$ can be found recursively from
\[
\varrho_\ell(\xi)= -\frac{1}{\psi(\xi)}\sum_{m=0}^{\ell-1} {{\ell}\choose{m}}\varrho_m(\xi) \psi^{(\ell-m)}(\xi)\ .
\]
\end{proposition}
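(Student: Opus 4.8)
The plan is to establish \eqref{E1} by induction on $k$, using as input only the recursion $p_k(\xi)=p_{k-1}(\xi)-\tfrac{\xi}{k-1}\,p_{k-1}'(\xi)$ for $k\geqslant 2$ derived above, together with the initialization $p_1(\xi)=\xi/\psi(\xi)$. Write $Q_m(\xi):=\sum_{\ell=0}^{m}\tfrac{(-\xi)^{\ell}}{\ell!}\,p_1^{(\ell)}(\xi)$, so that \eqref{E1} is precisely the assertion $p_k(\xi)=Q_{k-1}(\xi)$. The base case $k=1$ is immediate, since $Q_0(\xi)=p_1(\xi)$.

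The crux of the induction is the ``collapse under differentiation'' identity
\[ Q_m'(\xi)=\frac{(-\xi)^{m}}{m!}\,p_1^{(m+1)}(\xi). \]
To see this I would differentiate $Q_m$ termwise and apply the product rule to each summand: the piece $-\tfrac{(-\xi)^{\ell-1}}{(\ell-1)!}p_1^{(\ell)}(\xi)$ coming from differentiating the monomial $(-\xi)^{\ell}/\ell!$ in the $\ell$-th summand cancels the piece $+\tfrac{(-\xi)^{\ell-1}}{(\ell-1)!}p_1^{(\ell)}(\xi)$ coming from differentiating $p_1^{(\ell-1)}$ in the $(\ell-1)$-th summand; the $\ell=0$ summand contributes nothing from its constant monomial, so nothing survives at the bottom, and only the top term $\tfrac{(-\xi)^m}{m!}p_1^{(m+1)}(\xi)$ remains. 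Granting the induction hypothesis $p_{k-1}=Q_{k-2}$ and substituting into the recursion gives $p_k(\xi)=Q_{k-2}(\xi)-\tfrac{\xi}{k-1}\cdot\tfrac{(-\xi)^{k-2}}{(k-2)!}\,p_1^{(k-1)}(\xi)$; since $-\tfrac{\xi}{k-1}\cdot\tfrac{(-\xi)^{k-2}}{(k-2)!}=\tfrac{(-\xi)^{k-1}}{(k-1)!}$, the right-hand side is exactly $Q_{k-1}(\xi)$, which closes the induction and proves \eqref{E1}.

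For the two auxiliary formulas, I would write $p_1(\xi)=\xi\,\varrho_0(\xi)$ with $\varrho_0(\xi)=1/\psi(\xi)$; the Leibniz rule then gives $p_1^{(\ell)}(\xi)=\xi\,\varrho_0^{(\ell)}(\xi)+\ell\,\varrho_0^{(\ell-1)}(\xi)=\xi\,\varrho_\ell(\xi)+\ell\,\varrho_{\ell-1}(\xi)$, as claimed. For the recursion satisfied by $\varrho_\ell$, differentiate the identity $\psi(\xi)\,\varrho_0(\xi)\equiv 1$ exactly $\ell$ times; for $\ell\geqslant 1$ the right-hand side vanishes, so Leibniz yields $\sum_{m=0}^{\ell}\binom{\ell}{m}\varrho_m(\xi)\,\psi^{(\ell-m)}(\xi)=0$, and solving for the $m=\ell$ term (whose coefficient is $\psi(\xi)$) gives the stated recursion for $\varrho_\ell$.

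I do not anticipate a genuine obstacle here. The only place requiring care is the index bookkeeping in the telescoping computation of $Q_m'$ — in particular verifying that the boundary contributions at $\ell=0$ and at $\ell=m$ behave as claimed — and the one-line simplification of the prefactor $-\tfrac{\xi}{k-1}\cdot\tfrac{(-\xi)^{k-2}}{(k-2)!}$. Everything else is elementary differentiation, already licensed by the manipulations used to derive the recursion for $p_k$.
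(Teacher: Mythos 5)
Your proof is correct and takes essentially the same route as the paper: induction on $k$ via the recursion $p_k=p_{k-1}-\tfrac{\xi}{k-1}p_{k-1}'$ with a telescoping cancellation, plus Leibniz for the two auxiliary identities. The only cosmetic difference is that you isolate the telescoping into the identity $Q_m'(\xi)=\tfrac{(-\xi)^m}{m!}p_1^{(m+1)}(\xi)$ before substituting, whereas the paper carries out the same cancellation inline after substituting; both versions are sound.
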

\begin{proof}
We prove (\ref{E1}) inductively. The validity of the expression for $p_k(\xi)$ is obvious for $k=1$. Now suppose the claim holds for some $k\in{\mathbb N}.$ Then,
\begin{align*}
p_{k+1}(\xi)&= p_{k}(\xi)-\frac{\xi}{k}\,p_k'(\xi)\\
&=\sum_{{\ell=0}}^{k-1}\frac{(-\xi)^\ell}{\ell!} p_1^{(\ell)}(\xi) +\frac{\xi}{k}\sum_{\ell=1}^{k-1}\frac{(-\xi)^{\ell-1}}{(\ell-1)!} p_1^{(\ell)}(\xi) -\frac{\xi}{k}\sum_{\ell=0}^{k-1}\frac{(-\xi)^\ell}{\ell!} p_1^{(\ell+1)}(\xi)\\
&=\sum_{{\ell=0}}^{k-1}\frac{(-\xi)^\ell}{\ell!} p_1^{(\ell)}(\xi) -\frac{\xi}{k}\frac{(-\xi)^{k-1}}{(k-1)!} p_1^{(k)}(\xi)=\sum_{\ell=0}^{k}\frac{(-\xi)^\ell}{\ell!} p_1^{(\ell)}(\xi)\ ,
\end{align*}
where the third step follows by recognizing a telescopic series. This proves the first claim. The second claim follows trivially by $\ell$ times differentiating $\xi/\psi(\xi)$ (where we use the binomial expansion for higher derivatives of products of functions). Regarding the third claim, observe that, for any $\ell\in{\mathbb N}$,
\[
\frac{\diff ^\ell}{\diff \xi^\ell} \left(\psi(\xi)\cdot\frac{1}{\psi(\xi)}\right) =0\ ,
\]
or, equivalently,
\[
\sum_{m=0}^\ell {{\ell}\choose{m}}\varrho_m(\xi) \psi^{(\ell-m)}(\xi)=0\ .
\]
This immediately yields the stated recursion.
\end{proof}

\begin{remark} {A sanity check of the above formula is that it should yield, as a consequence of the celebrated PASTA property (`Poisson arrivals see time averages'; see \cite[Section VII.6.1]{book_A2003}), that
$\lim_{t\to\infty} \P(V(t) = 0) =\lim_{k\to\infty} \P(V_k = 0).$ This relation indeed holds, as follows from
\begin{align}\label{E3}
\lim_{\xi\downarrow 0} p_k(\xi) &= \lim_{\xi\downarrow 0} p_1(\xi) = \lim_{\xi\downarrow 0} \frac{\xi}{\psi(\xi)} = \lim_{\xi\downarrow 0} \frac{1}{\psi'(\xi)} =\frac{1}{\psi'(0)}=\varphi'(0)\ ;
\end{align}
the leftmost expression equals $\lim_{k\to\infty} \P(V_k = 0)$, whereas the rightmost expression, as a direct consequence of the generalized Pollaczek-Khinchine formula equals $\lim_{t\to\infty} \P(V(t) = 0)$ (see \cite[Thm. 3.2]{book_DM2015}). The {second} equality in (\ref{E3}) is due to our explicit expression for $p_1(\xi)$. \hfill$\Diamond$
}
\end{remark}

\begin{remark} {Remarkably, inspecting the proof of Proposition \ref{P1} reveals that this approach provides us with a general devise to translate a transform at an exponential epoch (with mean $\xi^{-1}$) into its counterpart at an Erlang epoch (with parameters $k$ and $\xi$). 
Indeed, defining $\alpha_k(\xi\,|\,s):= \E\, {\rm e}^{-s \bar A_k}$, with $(A_t)_{t\in{\mathbb R}}$ some stochastic process and $\bar A_k:=A_{S_k}$,
we obtain that
\[\alpha_k(\xi\,|\,s) := \sum_{\ell=0}^{k-1}\frac{(-\xi)^\ell}{\ell!} \alpha_1^{(\ell)}(\xi\,|\,s).\]
This idea can e.g.\ be used to find the transform of the workload process in a L\'evy-driven queue, say with spectrally-positive input, at an Erlang epoch. Once more assuming $V_0=0$, in this case we should take \cite[Thm. 4.1]{book_DM2015}
\[
\alpha_1(\xi\,|\,s) = \E\, {\rm e}^{-s \bar A_1}=\frac{\xi}{\psi(\xi)}\cdot\frac{\psi(\xi)-s}{\xi-\varphi(s)}\ .
\]
In this case the evaluation of the derivatives $\alpha_1^{(\ell)}(\xi\,|\,s)$ is more involved than in the setting described in Proposition \ref{P1}; it can be checked that $\alpha_2(\xi\,|\,s)$ agrees with the formula in the last display of \cite[p.\ 54]{book_DM2015} (choosing $x=0$ there), that was derived in an entirely different manner. 
The setup considered in \cite{SBM2016} is related; there the focus is on the case that the inter-event times $T_k$ have different means (where the case with equal means can in principle be dealt with applying a limiting argument).
\hfill$\Diamond$
}\end{remark}

In our setting, we typically have $\varphi(\cdot)$ at our disposal (and all its derivatives), and in addition we can numerically evaluate $\psi(\xi)$ (e.g.\ by bisection), but (as we lack a closed-form expression for $\psi(\xi)$) we cannot easily evaluate the derivatives of $\psi(\xi)$. (An exception is the case that $(X_t)_{t\in{\mathbb R}}$ is a compound Poisson process with exponential jumps; then the inverse $\psi(\cdot)$ allows an explicit expression.) To remedy this, the classical Fa\`a di Bruno formula is helpful. Regarding the first derivative, we know that $\varphi(\psi(\xi))=\xi$, so that by differentiation we obtain $\varphi'(\psi(\xi))\psi'(\xi) =1$, and hence \[\psi'(\xi) = \frac{1}{\varphi'(\psi(\xi))}.\]
Differentiating one more time yields $\varphi''(\psi(\xi))(\psi'(\xi))^2+ \varphi'(\psi(\xi))\psi''(\xi) =0$, leading to
\[
\psi''(\xi) = -\frac{\varphi''(\psi(\xi))(\psi'(\xi))^2}{\varphi'(\psi(\xi))}=\frac{\varphi''(\psi(\xi))}{(\varphi'(\psi(\xi)))^3}\ .
\]
This procedure extends to higher-order derivatives. In general, the Fa\`a di Bruno formula yields that, for any $k\in \{2,3,\ldots\}$,
\[
0 = \frac{\diff ^k}{\diff \xi^k} \xi= \frac{\diff ^k}{\diff \xi^k}\varphi(\psi(\xi)) = \sum_{{\bs m}\in{\mathscr M}_k}\frac{k!}{m_1!\cdots m_k!}\varphi^{(m_1+\cdots +m_k)}(\psi(\xi))\prod_{j=1}^k \left(\frac{\psi^{(j)}(\xi)}{j!}\right)^{m_j}\ ,
\]
where the summation is over the set ${\mathscr M}_k$ containing all non-negative integers $m_1,\ldots,m_k$ such that $m_1 + 2 m_2+\cdots+km_k = k.$ With ${\mathscr M}^\circ_k:={\mathscr M}_k\setminus\{0,\ldots,0,1\}$, we thus obtain the following recursive formula. 
\begin{lemma} \label{L1} 
For $k\in\{2,3,\ldots\}$,
\begin{equation}\label{E2} 
\psi^{(k)}(\xi) = -\frac{1}{\varphi'(\psi(\xi))}\sum_{{\bs m}\in{{\mathscr M}^\circ_k}}\frac{k!}{m_1!\cdots m_k!}\varphi^{(m_1+\cdots +m_k)}(\psi(\xi))\prod_{j=1}^{k-1} \left(\frac{\psi^{(j)}(\xi)}{j!}\right)^{m_j}\ .
\end{equation}
\end{lemma}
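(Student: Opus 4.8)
The plan is to start from the identity $\varphi(\psi(\xi)) = \xi$, which holds for all $\xi$ in the relevant range, and differentiate it $k$ times using the Fa\`a di Bruno formula for the $k$-th derivative of a composition. This is precisely the displayed identity in the paragraph preceding the lemma: since $\diff^k \xi/\diff\xi^k = 0$ for $k\geqslant 2$, we obtain
\[
0 = \sum_{{\bs m}\in{\mathscr M}_k}\frac{k!}{m_1!\cdots m_k!}\,\varphi^{(m_1+\cdots +m_k)}(\psi(\xi))\prod_{j=1}^k \left(\frac{\psi^{(j)}(\xi)}{j!}\right)^{m_j}\ ,
\]
where ${\mathscr M}_k$ is the set of tuples of non-negative integers with $m_1 + 2m_2 + \cdots + km_k = k$. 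So in fact the main identity is already in hand; the lemma is just a matter of isolating the term containing the highest derivative $\psi^{(k)}(\xi)$.

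The key observation is that $\psi^{(k)}(\xi)$ appears in the sum only through the single tuple ${\bs m} = (0,\ldots,0,1)$, i.e.\ $m_k = 1$ and all other $m_j = 0$. Indeed, if $m_k \geqslant 1$ then the constraint $m_1 + 2m_2 + \cdots + km_k = k$ forces $m_k = 1$ and $m_1 = \cdots = m_{k-1} = 0$; and if $m_k = 0$ then no factor $\psi^{(k)}$ appears and all surviving factors are among $\psi^{(1)},\ldots,\psi^{(k-1)}$. For the distinguished tuple the multinomial coefficient is $k!/(0!\cdots 0!\,1!) = k!$, the order of the $\varphi$-derivative is $m_1+\cdots+m_k = 1$, and the product of $\psi$-factors is $\psi^{(k)}(\xi)/k!$. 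Hence that term equals $k!\cdot\varphi'(\psi(\xi))\cdot \psi^{(k)}(\xi)/k! = \varphi'(\psi(\xi))\,\psi^{(k)}(\xi)$. Splitting the sum as this term plus the sum over ${\mathscr M}^\circ_k := {\mathscr M}_k\setminus\{(0,\ldots,0,1)\}$, and noting that every tuple in ${\mathscr M}^\circ_k$ has $m_k = 0$ so the product may be taken over $j = 1,\ldots,k-1$, gives
\[
0 = \varphi'(\psi(\xi))\,\psi^{(k)}(\xi) + \sum_{{\bs m}\in{\mathscr M}^\circ_k}\frac{k!}{m_1!\cdots m_k!}\,\varphi^{(m_1+\cdots+m_k)}(\psi(\xi))\prod_{j=1}^{k-1}\left(\frac{\psi^{(j)}(\xi)}{j!}\right)^{m_j}\ .
\]
Dividing by $\varphi'(\psi(\xi))$, which is nonzero since $\varphi$ is strictly increasing on the range of $\psi$ (equivalently $\psi' = 1/(\varphi'\circ\psi)$ is well-defined, as already derived), yields \eqref{E2}. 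The recursion is well-posed because the right-hand side involves only $\psi^{(1)},\ldots,\psi^{(k-1)}$ together with $\psi(\xi)$ and derivatives of $\varphi$, all of which are available at stage $k$.

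The only point requiring a little care — and the natural place for a slip — is the bookkeeping that the tuple $(0,\ldots,0,1)$ is the unique contributor of $\psi^{(k)}$ and that its coefficient is exactly $\varphi'(\psi(\xi))$; everything else is a routine rearrangement of a standard formula. I would state this combinatorial fact explicitly rather than leave it implicit, since it is the crux of why the "recursion" genuinely expresses $\psi^{(k)}$ in terms of lower-order derivatives. No nontrivial analytic obstacle arises: differentiability of $\psi$ to all orders follows from that of $\varphi$ together with $\varphi' > 0$ on the relevant domain (by the inverse function theorem applied iteratively), which is implicit in the setup.
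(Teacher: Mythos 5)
Your proof is correct and follows essentially the same route as the paper: applying Fa\`a di Bruno to $\varphi(\psi(\xi))=\xi$, noting the $k$-th derivative vanishes for $k\geqslant 2$, and isolating the distinguished tuple $(0,\ldots,0,1)$ whose coefficient is $\varphi'(\psi(\xi))\,\psi^{(k)}(\xi)$. You spell out the combinatorial observation (that $m_k\geqslant 1$ forces $m_k=1$ and all other $m_j=0$) more explicitly than the paper, which simply presents Lemma~\ref{L1} as the direct consequence of the displayed Fa\`a di Bruno identity and remarks afterward that $m_1+\cdots+m_k<k$ on ${\mathscr M}_k^\circ$ makes the recursion well-posed.
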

The above relation is a genuine recursion, due to the fact that $m_1+\cdots+m_k<k$ for all ${\bs m}\in{{\mathscr M}^\circ_k}$, entailing that when evaluating $\psi^{(k)}(\xi)$ all quantities appearing in the right-hand side of (\ref{E2}) are known. 

\vb

The next step is to compute the $r_k(\xi)$ from the $p_k(\xi)$, which can be done recursively. We partition the event of having a positive workload at observations 1 up $k-1$ (i.e., $V_1>0$, \ldots, $V_{k-1}>0$) and a workload 0 at the $k$-th observation (i.e., $V_k=0$), as follows. The main idea is that the event under consideration can be written as the difference between (A) the event that $V_k=0$, (B) the event that $V_k=0$ but 
$V_\ell=0$ for (at least) some $\ell\in\{1,\ldots,k-1\}.$ The probability of event (A) is $p_k(\xi)$. The event (B) can be written as the union of the {\it disjoint} events 
\[{\mathscr C}_{k.\ell} := \{V_1>0.\ldots,V_{\ell-1}>0,V_\ell = 0,V_k=0\},\]
for $\ell\in\{1,\ldots,k-1\}$; the event ${\mathscr C}_{k.\ell}$ 
has probability $r_\ell(\xi) p_{k-\ell}(\xi)$.
We thus obtain that $r_k(\xi)$ can be evaluated recursively through the following relation, providing us with the distribution of the QBP.
\begin{lemma} 
For $k\in{\mathbb N}$, 
\[r_k(\xi) = p_k(\xi) - \sum_{\ell=1}^{k-1} r_\ell(\xi) p_{k-\ell}(\xi),\]
where the empty sum is defined as $0$. 
Here the probabilities $p_j(\xi)$ directly follow from Proposition~$\ref{P1}$ and Lemma $\ref{L1}$. 
\end{lemma}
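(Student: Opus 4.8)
The plan is to establish the recursion by a first-return (renewal) decomposition of the event $\{V_k=0\}$, conditioned on $V_0=0$. The structural fact I would invoke is that the sampled sequence $(V_k)_{k\geqslant 0}$ is a time-homogeneous Markov chain: since the inter-sample times $T_1,T_2,\ldots$ are i.i.d.\ $\mathrm{Exp}(\xi)$ and independent of the workload process $V(\cdot)$, which is itself Markov, the conditional law of $V_i$ given $(V_0,\ldots,V_{i-1})$ depends only on $V_{i-1}$ --- it is precisely the law encoded in \eqref{eq:V_LST_transient}. Writing $R:=\inf\{k\in{\mathbb N}: V_k=0\}$ and $p_0(\xi):=1$, the numbers $p_k(\xi)=\P(V_k=0\,|\,V_0=0)$ and $r_k(\xi)=\P(R=k\,|\,V_0=0)$ are then the $k$-step return and first-return probabilities of this chain at the state $0$, which is exactly the classical setup in which such a convolution identity holds.

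Next I would partition the event $\{V_k=0\}$ (under $V_0=0$) according to the value of $R$. Since $V_k=0$ forces $R\leqslant k$, while $R\geqslant 1$ always, one has the disjoint decomposition $\{V_k=0\}=\bigsqcup_{\ell=1}^{k}\{R=\ell\}\cap\{V_k=0\}$, where for $\ell<k$ the set $\{R=\ell\}\cap\{V_k=0\}$ is exactly $\mathscr{C}_{k,\ell}=\{V_1>0,\ldots,V_{\ell-1}>0,V_\ell=0,V_k=0\}$ and for $\ell=k$ it is $\{V_1>0,\ldots,V_{k-1}>0,V_k=0\}$, whose probability is $r_k(\xi)$ by definition. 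Because $R$ is a stopping time for the natural filtration of $(V_k)$ and $\{R=\ell\}\in\sigma(V_0,\ldots,V_\ell)$, the Markov property applied at the (deterministic-on-$\{R=\ell\}$) time $\ell$ gives that, conditionally on $\{R=\ell\}$, the sequence $V_{\ell},V_{\ell+1},\ldots$ is a fresh copy of the chain started from $0$; hence $\P(\mathscr{C}_{k,\ell}\,|\,V_0=0)=r_\ell(\xi)\,\P(V_{k-\ell}=0\,|\,V_0=0)=r_\ell(\xi)\,p_{k-\ell}(\xi)$.

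Summing over $\ell\in\{1,\ldots,k\}$ then yields $p_k(\xi)=\sum_{\ell=1}^{k} r_\ell(\xi)\,p_{k-\ell}(\xi)=r_k(\xi)\,p_0(\xi)+\sum_{\ell=1}^{k-1} r_\ell(\xi)\,p_{k-\ell}(\xi)$, and solving for $r_k(\xi)$ using $p_0(\xi)=1$ gives the claimed formula; the case $k=1$ degenerates to $r_1(\xi)=p_1(\xi)$ with the empty sum equal to $0$. The inputs $p_j(\xi)$ on the right-hand side are supplied by Proposition~\ref{P1} together with Lemma~\ref{L1}, completing the recursive description of the QBP distribution.

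I expect the only point genuinely requiring care --- the main obstacle --- to be the clean justification of the restart property: that conditioning on $\{R=\ell,V_\ell=0\}$ turns $V_{\ell+1},V_{\ell+2},\ldots$ into an independent fresh copy of the chain started at $0$. This follows from the Markov property of $(V_k)$ and the measurability of $\{R=\ell\}$ with respect to $\sigma(V_0,\ldots,V_\ell)$; it is worth noting the (harmless) subtlety that $\{V_\ell=0\}$ carries positive mass, so all conditional probabilities above are well defined elementarily, with no need to invoke regular conditional distributions. Everything else is routine bookkeeping of disjoint events.
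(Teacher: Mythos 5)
Your proof is correct and follows essentially the same first-return decomposition the paper uses: partitioning $\{V_k=0\}$ (given $V_0=0$) into the disjoint events $\mathscr{C}_{k,\ell}$ indexed by the first return time $\ell$, with $\P(\mathscr{C}_{k,\ell})=r_\ell(\xi)\,p_{k-\ell}(\xi)$ by the Markov property of $(V_k)$. The only cosmetic difference is that you sum over $\ell=1,\ldots,k$ and then solve for $r_k(\xi)$, whereas the paper subtracts the union over $\ell<k$ directly; you also make the Markov/restart justification explicit, which the paper leaves implicit.
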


\subsection{LRT for quasi busy periods}\label{S32}
In this subsection we point out how the results that were presented in Section~\ref{S31} can be used to develop a test for the setting
(\ref{T1}),
relying on a sample of QBPs $(R_1,\ldots,R_n)$, indicating the number of samples between every two consecutive observations corresponding to a zero-workload.
The log-likelihood of a sample $(R_1,\ldots,R_n)$ is now, in self-evident notation,
\[
\ell_n:=\sum_{i=1}^nP_i,\:\:\:\:P_i:=\log\left( \frac{r_{R_i}^{(0)}(\xi)}{r_{R_i}^{(1)}(\xi)}\right)\ ,
\]
which can be computed relying on the expressions derived in Section \ref{S31}. 
We can now construct standard LRT. In a two-sided test there are two thresholds, say $x_0$ and $x_1$ such that $x_0<0<x_1$ The decision rule is based on $N:=\inf\{n\geqslant 1:\ \ell_n\notin[x_0,x_1]\}$: reject the null hypothesis if $\ell_N>x_1$ and accept the null hypothesis if $\ell_N<x_0$. If $x_0=-\infty$, then this is a power-one test and the type-I error probability is given by
\[
\P_{\mathrm{H}_0}(N<\infty)=\P_{\mathrm{H}_0}\left(\sup_{n\geqslant 1}\ell_n \geqslant x_1\right)\ .
\]
Of course, a power-one test may never stop sampling in cases that ${\rm H}_0$ is never accepted. In some applications this assumption is reasonable because the underlying system works continuously and observations keep being collected. In other cases one may stop the test after some large number of observations, in which case the power of the test will be close to one.

We proceed by reflecting on the pros and cons of this test. 
The main disadvantage of this method is that a substantial amount of information is lost when transforming the workload observations into QBPs: in fact it is only used whether a observation is zero or positive (i.e., its precise value is ignored).
There are, however, two important attractive properties:
\begin{enumerate}
\item The test allows distinguishing between any pair of two L\'evy subordinators with $\varphi_0(\cdot)\neq\varphi_1(\cdot)$ and an arbitrary sampling rate $\xi$. This is because the distribution function $r_k$ depends on all derivatives of the inverse function $\psi(\cdot)$. As we will see later, the test that is presented in Section \ref{sec:cond_LRT} does not have this property. 
\item As mentioned, the sample $(R_1,\ldots,R_n)$ is i.i.d. As a consequence, standard methods for approximating the error {$\alpha(x)$} are readily available; see, e.g., the textbooks \cite{book_S2013,book_Y2013}.
\end{enumerate}

From a practical point of view, as the number of operations required to evaluate $r_i^{(k)}(\xi)$ is of the  order $i!$, there may be computational issues. This means that, particularly for larger $k$, the computations may become time consuming. This problem can be overcome by a truncation: for some appropriately chosen $K$, all QBP durations of at least $K$ are lumped together, and have probability \[1-\sum_{j=1}^{K-1}r_i^{(k)}(\xi).\]

\vb

We conclude this section by providing a quantification of the type-1 error. Consider the random walk $\ell_n=\sum_{i=0}^n P_i$. For any given threshold $x$ we define the first passage time through $N_x:=\inf\{n:\ \ell_n\geqslant x\}$. We are interested in the hitting probability $\alpha(x):=\P_{\mathrm{H}_0}(N_x<\infty)$, and the expected first passage time $\E_{\mathrm{H}_1}N_x$. 

To characterize $\alpha(x)$ we follow a standard procedure. Let $\kappa(\beta):=\log\E_{\mathrm{H}_0}  {\rm e}^{\beta \ell_1}$. Then, for any $\beta$ for which $\kappa(\beta)$ is well-defined, $\exp({\beta \ell_n-n\kappa(\beta)})$ is a mean-1 martingale.
The Lundberg coefficient is given by the $\gamma>0$ being the unique solution of $\kappa(\gamma)=0$ (where existence of this solution follows from $\kappa'(0)=\E_{\mathrm{H}_0} S_1<0$). Applying \cite[Thm. III.5.1]{book_A2003}, we have that
\[
{\alpha(x)}=\P_{\mathrm{H}_0}(N_x<\infty)\approx C {\rm e}^{-\gamma x}\ ,
\]
where $C=\lim_{x\to\infty}\E_{{\rm H}_0} {\rm e}^{-\gamma(N_x-x)}$; an explicit expression for $C$ can be found in e.g.\ \cite{KOR}.
In addition, by standard arguments, $\E_{{\rm H}_1}N_x \approx x/|\E_{{\rm H}_1} P_1|.$

\section{Rate of convergence to steady state}\label{sec:converge}

In the previous section we have set up a test based on quasi busy periods. In Section \ref{sec:cond_LRT} we propose an alternative approach, which we call a conditional likelihood ratio test (CLRT). As a preparation to the performance analysis of this CLRT, in the present section we provide results describing the speed of convergence of a L\'evy-driven storage system. More specifically, they (i) facilitate the computation of the asymptotic variance of the likelihood ratio, and (ii) enable the construction of a functional limit theorem for the likelihood-ratio that can then be used in order to approximate the test's error probability by a Brownian motion hitting probability. Importantly, however, these speed-of-convergence results are, to the best of our knowledge, new, and relevant in their own right. 
 
 \vb

The convergence rate conditions we deal with here are weaker than those required for geometric ergodicity, in the sense of \cite{book_MT2009}. Informally speaking, we do not require that the observed workload process converges exponentially fast to the stationary distribution, but rather that it does so at a rate such that the sum of absolute deviations of certain functions of the workload (relative to their expectations according to the stationary distribution, that is) converges. 

 \vb
 
In what follows we make use of both the continuous time workload $V(t)=X(t)+L(t)$, where $L(t)=-\inf_{0\leqslant s\leqslant t}X(s)$, and the corresponding discretely observed workload process $V_n=V(S_n)$, where $S_n$ is Erlang distributed with scale parameter $n$ and shape parameter $\xi>0$. Recall that $V$ denotes the steady-state workload, where the stability condition $\varphi'(0)>0$ is assumed throughout. The first lemma states a PASTA-type equivalence result for comparing the accumulated deviation from the stationary expectation in continuous time to the corresponding deviation at Poisson epochs. 

\begin{lemma} \label{lemma:PASTA_difference} Assume $\varphi\apost(0)>0$. 
Then, for any initial workload $V(0)$ such that $\P(V(0)<\infty)=1$ and any {measurable} function $g(\cdot)$,
\begin{equation}\label{eq:int_sum}
\xi \int_0^\infty \E[g(V(t))-g(V)|V(0)]\diff t = \sum_{n=1}^\infty \E[g(V_n)-g(V)|V(0)]\ ,
\end{equation}
assuming that both sides of \eqref{eq:int_sum} converge almost surely.
\end{lemma}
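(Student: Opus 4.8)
The plan is to condition on the initial level $V(0)$ throughout and to exploit the independence of the Poisson sampling clock from the workload path. Set $\phi(t):=\E[g(V(t))\,|\,V(0)]$ and $h(t):=\phi(t)-\E[g(V)]$; since the steady-state workload $V$ is taken independent of $V(0)$ we have $\E[g(V)\,|\,V(0)]=\E[g(V)]$, and finiteness of $\E[g(V)]$ (and of $\phi(t)$ for Lebesgue-a.e.\ $t$) is implicit in the hypothesis that both sides of \eqref{eq:int_sum} converge. Because $S_n$ is Erlang$(n,\xi)$ and independent of $(V(t))_{t\ge 0}$, the tower property gives
\[
\E[g(V_n)-g(V)\,|\,V(0)]=\int_0^\infty f_{S_n}(t)\,h(t)\diff t, \qquad f_{S_n}(t)=\frac{\xi^{n}t^{n-1}e^{-\xi t}}{(n-1)!}\,,
\]
using in addition that $\int_0^\infty f_{S_n}(t)\diff t=1$ to absorb the $\E[g(V)]$ term.

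The second ingredient is the elementary identity $\sum_{n\ge 1}f_{S_n}(t)=\xi e^{-\xi t}\sum_{m\ge 0}(\xi t)^m/m!=\xi$ for every $t>0$ — equivalently, the renewal intensity of a rate-$\xi$ Poisson process is constant and equal to $\xi$. Summing the previous display over $n$ and interchanging summation and integration then turns the right-hand side of \eqref{eq:int_sum} into $\xi\int_0^\infty h(t)\diff t$, which is the left-hand side, and we are done.

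The only point genuinely requiring care is the justification of that interchange, since $h$ need not have a constant sign. I would pass through partial sums: for finite $N$,
\[
\sum_{n=1}^N \E[g(V_n)-g(V)\,|\,V(0)]=\int_0^\infty\Bigl(\xi e^{-\xi t}\sum_{k=0}^{N-1}\frac{(\xi t)^k}{k!}\Bigr)h(t)\diff t,
\]
where the integrand is bounded in modulus by $|h(t)|$ and converges pointwise to $\xi h(t)$ as $N\to\infty$ (because $e^{-\xi t}\sum_{k=0}^{N-1}(\xi t)^k/k!\uparrow 1$). Reading the stated convergence of the left-hand side of \eqref{eq:int_sum} as $\int_0^\infty|h(t)|\diff t<\infty$, dominated convergence applies and yields \eqref{eq:int_sum}; it simultaneously shows the right-hand side converges absolutely, so the two convergence hypotheses are mutually consistent. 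An equivalent, slightly quicker route is Campbell's formula for the Poisson random measure $\sum_n\delta_{S_n}$ on $(0,\infty)$, whose intensity is $\xi\,\diff t$, applied to the conditionally deterministic integrand $h$; absolute integrability of $h$ is exactly the hypothesis making that formula applicable.

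I expect the main obstacle to be precisely this interchange together with the measure-zero caveats hidden in ``almost surely'': one should check that $\phi(t)$ is finite for Lebesgue-a.e.\ $t$ on an event of full probability in $V(0)$, that $\E[g(V)]$ is finite, and that $(t,\omega)\mapsto\E[g(V(t))\,|\,V(0)](\omega)$ is jointly measurable, so that Fubini/Campbell can legitimately be invoked on the product space. None of this is deep, but it is what a fully rigorous write-up has to dispatch.
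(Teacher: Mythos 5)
Your proof is correct and takes a genuinely different route from the paper. The paper's argument is an Abel-type trick: it rewrites $\xi\int_0^\infty h(t)\,\diff t$ as $\lim_{q\downarrow 0} q^{-1}\int_0^\infty q\xi e^{-q\xi t}h(t)\,\diff t$, recognizes the inner integral as $\E[h(T_{q\xi})]$ for $T_{q\xi}\sim\mathrm{Exp}(q\xi)$, and then uses the identity ``a Geometric$(q)$ sum of $\mathrm{Exp}(\xi)$ variables is $\mathrm{Exp}(q\xi)$'' to expand that expectation as a power series $\sum_{n\geq 1} q(1-q)^{n-1}(\E[g(V_n)|V(0)]-\E[g(V)])$, before sending $q\downarrow 0$. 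Your argument instead conditions on the sampling times directly and invokes the elementary renewal-density identity $\sum_{n\geq 1} f_{S_n}(t)=\xi$, reducing the claim to a Fubini/monotone-limit interchange controlled by dominated convergence along partial sums. Your route is arguably more transparent and spells out the interchange condition ($\int|h|<\infty$) explicitly, whereas the paper's proof leaves the corresponding Abel-to-ordinary-convergence step implicit; both rest on the same unstated absolute-integrability assumption, so neither is strictly more rigorous than the other, but yours makes the hypothesis visible. The Campbell's-formula observation you add is a nice conceptual gloss: $\sum_n \delta_{S_n}$ has deterministic intensity $\xi\,\diff t$, which is exactly the content of the identity you use.
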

\begin{proof} 
We can write the left-hand side of (\ref{eq:int_sum}) as
\[
\xi \int_0^\infty \E[g(V(t))-g(V)|V(0)]\diff t =\lim_{q\downarrow 0} \frac{1}{q} \int_0^\infty q\xi  {\rm e}^{-q \xi t} \,\E[g(V(t))-g(V)|V(0)]\diff t\ ,
\]
which equals, with $T_\xi$ denoting an exponentially distributed random variable with mean $\xi^{-1}$,
\[
\lim_{q\downarrow 0} q^{-1}\cdot {\E[g(V(T_{q\xi}))-g(V)|V(0)]}\ .
\]
Now recall that a geometrically distributed (with success parameter $q$) number of exponentially distributed random variables (with mean $\xi^{-1}$) is exponentially distributed (with mean $(q\xi)^{-1}$). This means that we can rewrite the expression in the previous display as
\[
\lim_{q\downarrow 0} \frac{1}{q} \sum_{n=1}^\infty q(1-q)^{n-1} \E[g(V)-g(V_n)|V(0)]=\sum_{n=1}^\infty  \E[g(V_n)-g(V)|V(0) ]\ ,
\]
which equals the right-hand side of (\ref{eq:int_sum}). 
\end{proof}

\begin{lemma}\label{lemma:Evev}
For any initial workload $V(0)$ such that $\P(V(0)<\infty)=1$,
\begin{align}\nonumber
\E\left[V_1  {\rm e}^{-\alpha V_1}|V(0)\right]&=\frac{\xi}{(\xi-\varphi(\alpha))^2}\Big(\big(V(0)(\xi-\varphi(\alpha))-\varphi\apost(\alpha)\big) {\rm e}^{-\alpha V(0)}\:+\\&\hspace{3.3cm}\frac{\xi-\varphi(\alpha)+\alpha\varphi\apost(\alpha)}{\psi(\xi)} {\rm e}^{-\psi(\xi)V(0)}\Big)\ .
\label{eq:Eveav}
\end{align}
If $0<\varphi\apost(0)<\infty$, then
\begin{equation}\label{eq:Evev}
\E\left[V  {\rm e}^{-\alpha V}\right]=\frac{\varphi\apost(0)(\alpha\varphi\apost(\alpha)-\varphi(\alpha))}{\varphi(\alpha)^2}\ .
\end{equation}
\end{lemma}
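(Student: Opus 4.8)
The natural strategy is to differentiate the known Laplace transforms with respect to the transform parameter $\alpha$. For the first identity \eqref{eq:Eveav}, the plan is to start from the transient transform \eqref{eq:V_LST_transient}, written conditionally on $V(0)$ rather than on $V_{i-1}$:
\[
\E\!\left[{\rm e}^{-\alpha V_1}\,|\,V(0)\right]=\frac{\xi}{\xi-\varphi(\alpha)}\left({\rm e}^{-\alpha V(0)}-\frac{\alpha}{\psi(\xi)}{\rm e}^{-\psi(\xi)V(0)}\right).
\]
Since $\E[V_1{\rm e}^{-\alpha V_1}\,|\,V(0)]=-\frac{\diff}{\diff\alpha}\E[{\rm e}^{-\alpha V_1}\,|\,V(0)]$ (differentiation under the expectation being justified because $V_1\geqslant 0$ and the transform is analytic in a neighbourhood of any $\alpha$ with $\varphi(\alpha)<\xi$), I would simply carry out this differentiation. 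Writing the right-hand side as a product $\frac{\xi}{\xi-\varphi(\alpha)}\cdot H(\alpha)$ with $H(\alpha)={\rm e}^{-\alpha V(0)}-\frac{\alpha}{\psi(\xi)}{\rm e}^{-\psi(\xi)V(0)}$, the product rule gives two contributions: one from $\frac{\diff}{\diff\alpha}\frac{\xi}{\xi-\varphi(\alpha)}=\frac{\xi\varphi\apost(\alpha)}{(\xi-\varphi(\alpha))^2}$ and one from $\frac{\diff}{\diff\alpha}H(\alpha)=-V(0){\rm e}^{-\alpha V(0)}-\frac{1}{\psi(\xi)}{\rm e}^{-\psi(\xi)V(0)}$. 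Collecting terms over the common denominator $(\xi-\varphi(\alpha))^2$ and grouping the coefficients of ${\rm e}^{-\alpha V(0)}$ and ${\rm e}^{-\psi(\xi)V(0)}$ separately should reproduce \eqref{eq:Eveav} exactly; this is a routine but slightly lengthy algebraic simplification.

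For the stationary identity \eqref{eq:Evev}, the plan is identical but starting from the generalized Pollaczek–Khintchine formula \eqref{eq:GPK}, namely $\E{\rm e}^{-\alpha V}=\alpha\varphi\apost(0)/\varphi(\alpha)$. Again using $\E[V{\rm e}^{-\alpha V}]=-\frac{\diff}{\diff\alpha}\E{\rm e}^{-\alpha V}$, the quotient rule gives
\[
-\frac{\diff}{\diff\alpha}\frac{\alpha\varphi\apost(0)}{\varphi(\alpha)}
=-\varphi\apost(0)\cdot\frac{\varphi(\alpha)-\alpha\varphi\apost(\alpha)}{\varphi(\alpha)^2}
=\frac{\varphi\apost(0)\big(\alpha\varphi\apost(\alpha)-\varphi(\alpha)\big)}{\varphi(\alpha)^2},
\]
which is precisely \eqref{eq:Evev}. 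The assumption $0<\varphi\apost(0)<\infty$ is exactly what guarantees $V$ has finite mean and the transform is differentiable at $\alpha$; I would note in passing that letting $\alpha\downarrow 0$ and applying l'Hôpital recovers $\E V=\varphi^{(2)}(0)/(2\varphi\apost(0))$, consistent with \eqref{eq:V_moments}, as a sanity check. Alternatively, one could obtain \eqref{eq:Evev} from \eqref{eq:Eveav} by sending the initial workload to its stationary distribution and integrating $V(0)$ out against the law of $V$ (using $\E{\rm e}^{-\psi(\xi)V}$ and $\E[V{\rm e}^{-\psi(\xi)V}]$ evaluated via \eqref{eq:GPK}), but the direct differentiation is cleaner.

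The only mild obstacle is the bookkeeping in the first part: one must be careful to keep the terms with exponent $-\alpha V(0)$ and those with exponent $-\psi(\xi)V(0)$ separate, since $\varphi\apost(\alpha)$ multiplies the $V(0)$-term after differentiating the prefactor, and one must verify that the coefficient of ${\rm e}^{-\psi(\xi)V(0)}$ collapses to $(\xi-\varphi(\alpha)+\alpha\varphi\apost(\alpha))/\psi(\xi)$ after combining the two sources. There is no analytic subtlety beyond justifying interchange of differentiation and expectation, which follows from dominated convergence since $v\,{\rm e}^{-\alpha v}$ is bounded on $[0,\infty)$ for $\alpha>0$ and the transforms are finite for $\varphi(\alpha)<\xi$.
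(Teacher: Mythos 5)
Your proposal is correct, and for the transient identity \eqref{eq:Eveav} it is exactly the paper's argument: differentiate \eqref{eq:V_LST_transient} in $\alpha$, negate, and collect terms over $(\xi-\varphi(\alpha))^2$. For the stationary identity \eqref{eq:Evev} there is a small difference in route. You differentiate the Pollaczek--Khintchine transform \eqref{eq:GPK} directly, which is perhaps the cleanest argument. The paper instead obtains \eqref{eq:Evev} by letting $\xi\downarrow 0$ in the just-derived \eqref{eq:Eveav}: the ${\rm e}^{-\alpha V(0)}$ term is killed by the leading $\xi$, and in the remaining term $\xi/\psi(\xi)\to\varphi\apost(0)$ and ${\rm e}^{-\psi(\xi)V(0)}\to 1$ (for a stable system $\psi(\xi)\downarrow 0$), which recovers \eqref{eq:Evev}. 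Your route is more self-contained; the paper's route re-uses the transient computation, which has a mild unifying appeal but implicitly leans on $V_1\darrow V$ as the sampling time becomes infinitely long. Either is a complete proof, and your passing remark that one could also integrate $V(0)$ out against the law of $V$ is a third valid route. No gaps.
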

\begin{proof}
Observe that
\[\E\left[V_1  {\rm e}^{-\alpha V_1}|V(0)\right] = -\frac{\diff}{\diff\alpha}\E\left[ {\rm e}^{-\alpha V_1}{|V(0)}\right].\]
One obtains \eqref{eq:Eveav} by applying \eqref{eq:V_LST_transient}.
The stationary LST \eqref{eq:Evev} is obtained by taking $\xi\downarrow 0$.
\end{proof}

The following theorem states a number of results that describe the workload (observed at Poisson epochs) convergence to stationarity, conditionally on the initial workload $V(0).$
\begin{theorem}\label{thm:LST_convergence_anyV}
If $\P(V(0)<\infty)=1$, then the following equations hold almost surely: {\rm (i)}~assuming that $X(1)$ has a finite third moment $($i.e., $|\varphi_k^{(3)}(0)|<\infty$$)$, and defining for any $v\geqslant 0$
\[
k_1(v):=\frac{v^2}{2\varphi\apost(0)}+\frac{1}{6}\frac{\varphi^{(3)}(0)}{(\varphi'(0))^2}-\frac{1}{4}\frac{(\varphi^{(2)}(0))^2}{(\varphi'(0))^3} \ ,
\]
we have
\begin{equation}\label{eq:EV_convegence_anyV}
\sum_{n=1}^\infty\left( \E[V_n|V(0)]-\E V\right)= \xi \,k_1(V(0))\ ;
\end{equation}
{\rm (ii)}~defining for any $v,\alpha\geqslant 0$
\[
k_2(v,\alpha):= -\frac{ {\rm e}^{-\alpha v}+\alpha v}{\varphi(\alpha)}+\frac{\alpha}{(\varphi(\alpha))^2}\varphi'(0)+\frac{\alpha}{\varphi(\alpha)}\frac{\varphi^{(2)}(0)}{2\varphi'(0)}\ ,
\]
we have, for any $\alpha\geqslant 0$,
\begin{equation}\label{eq:LST_convegence_anyV}
\sum_{n=1}^\infty\left( \E\left[ {\rm e}^{-\alpha V_n}|V(0)\right]-\E\left[ {\rm e}^{-\alpha V}\right]\right)= \xi \,k_2(V(0),\alpha)\ ;
\end{equation}
{\rm (iii)}~defining for any $v,\alpha\geqslant 0$
\begin{align*}
k_3(v,\alpha)&:= \frac{\varphi'(\alpha)}{\varphi(\alpha)^2} \Big(\alpha v- {\rm e}^{-\alpha v}-\frac{v}{\varphi(\alpha)}(1- {\rm e}^{-\alpha v})\Big)\:-\\&\:\:\:\:\:\:\:\:\:\frac{\varphi\apost(0)}{(\varphi(\alpha))^2}
\Big(1-2\frac{\varphi'(\alpha)}{\varphi(\alpha)}\Big)+\frac{1}{\varphi(\alpha)}\frac{\varphi^{(2)}(0)}{2\varphi'(0)}\Big(1-\frac{\alpha\varphi'(\alpha)}{\varphi(\alpha)}\Big)\ .
\end{align*}
we have, for any $\alpha\geqslant 0$,
\begin{equation}\label{eq:EVeV_convegence_anyV}
\sum_{n=1}^\infty\left( \E\left[V_n {\rm e}^{-\alpha V_n}|V(0)\right]-\E\left[V {\rm e}^{-\alpha V}\right]\right) = \xi \,k_3(V(0),\alpha)\ .
\end{equation}\end{theorem}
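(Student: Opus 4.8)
The plan is to reduce all three identities to continuous time via Lemma~\ref{lemma:PASTA_difference}, and then to evaluate the resulting integrals by an Abelian (Laplace-transform) argument built on the explicit transient expressions \eqref{eq:V_LST_transient} and \eqref{eq:Eveav}. First I would apply Lemma~\ref{lemma:PASTA_difference} with $g(v)=v$ for \eqref{eq:EV_convegence_anyV}, with $g(v)={\rm e}^{-\alpha v}$ for \eqref{eq:LST_convegence_anyV}, and with $g(v)=v\,{\rm e}^{-\alpha v}$ for \eqref{eq:EVeV_convegence_anyV}. Each left-hand side then equals $\xi\, I_g(V(0))$, where
\[
I_g(V(0)):=\int_0^\infty\big(\E[g(V(t))\,|\,V(0)]-\E g(V)\big)\,\diff t ,
\]
so it remains to show $I_g(V(0))$ equals $k_1(V(0))$, $k_2(V(0),\alpha)$ and $k_3(V(0),\alpha)$ respectively (and to check the convergence required by the lemma, addressed below).

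To compute $I_g(V(0))$ I would observe that sampling the continuous-time workload at an independent rate-$q$ Poisson clock has Laplace--Stieltjes transform given by \eqref{eq:V_LST_transient} (respectively \eqref{eq:Eveav}) with $\xi$ replaced by $q$; equivalently, writing $T_q$ for an ${\rm Exp}(q)$ random variable, $\E[g(V(T_q))\,|\,V(0)]=q\int_0^\infty {\rm e}^{-qt}\E[g(V(t))\,|\,V(0)]\,\diff t$. Hence
\[
\int_0^\infty {\rm e}^{-qt}\big(\E[g(V(t))\,|\,V(0)]-\E g(V)\big)\,\diff t=q^{-1}\big(\E[g(V(T_q))\,|\,V(0)]-\E g(V)\big),
\]
and $I_g(V(0))$ is the $q\downarrow0$ limit of the right-hand side. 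The stationary quantities $\E g(V)$ are supplied by \eqref{eq:GPK} for \eqref{eq:LST_convegence_anyV}, by \eqref{eq:Evev} for \eqref{eq:EVeV_convegence_anyV}, and by \eqref{eq:V_moments} for \eqref{eq:EV_convegence_anyV}; for \eqref{eq:EV_convegence_anyV} one takes $\alpha=0$ in \eqref{eq:Eveav}, which gives $\E[V(T_q)\,|\,V(0)]=V(0)-\varphi'(0)/q+{\rm e}^{-\psi(q)V(0)}/\psi(q)$ with $\psi=\varphi^{-1}$. In every case the resulting expression is elementary in $q$, $V(0)$ and $\alpha$, the only non-explicit ingredient being $\psi(q)$.

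The core computation is the local expansion of $\psi$ at the origin. Inverting $\varphi(\psi(q))=q$ order by order — the mechanism behind Lemma~\ref{L1} — gives
\[
\psi(q)=\frac{q}{\varphi'(0)}-\frac{\varphi^{(2)}(0)}{2(\varphi'(0))^3}\,q^2+\Big(\frac{(\varphi^{(2)}(0))^2}{2(\varphi'(0))^5}-\frac{\varphi^{(3)}(0)}{6(\varphi'(0))^4}\Big)q^3+O(q^4),
\]
whence $1/\psi(q)=\varphi'(0)/q+\varphi^{(2)}(0)/(2\varphi'(0))+\big(\varphi^{(3)}(0)/(6(\varphi'(0))^2)-(\varphi^{(2)}(0))^2/(4(\varphi'(0))^3)\big)q+O(q^2)$, together with ${\rm e}^{-\psi(q)v}=1-\psi(q)v+\tfrac{1}{2}\psi(q)^2v^2-\cdots$. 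Substituting these into $q^{-1}\big(\E[g(V(T_q))\,|\,V(0)]-\E g(V)\big)$, the singular terms — of order $q^{-2}$ in \eqref{eq:EV_convegence_anyV} and \eqref{eq:EVeV_convegence_anyV}, and of order $q^{-1}$ throughout — cancel, partly against $q^{-1}\E g(V)$ and partly among themselves, and the surviving $q^0$-term simplifies to $k_1(V(0))$, $k_2(V(0),\alpha)$ and $k_3(V(0),\alpha)$, respectively. Multiplying by $\xi$ yields the theorem. Note that only \eqref{eq:EV_convegence_anyV} requires the $q^3$-coefficient of $\psi$ — equivalently the finiteness of the third moment of $X(1)$ — which is why $k_1$ features $\varphi^{(3)}(0)$ whereas $k_2$ and $k_3$ do not.

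The main obstacle is the justification that $I_g(V(0))$ genuinely equals $\lim_{q\downarrow0}q^{-1}\big(\E[g(V(T_q))\,|\,V(0)]-\E g(V)\big)$: for a signed integrand the naive final-value theorem may fail, so this Abelian step needs support. I would therefore first establish the absolute convergence of $\int_0^\infty\big|\E[g(V(t))\,|\,V(0)]-\E g(V)\big|\,\diff t$ — equivalently, of $\sum_{n}\big|\E[g(V_n)\,|\,V(0)]-\E g(V)\big|$, which is also precisely the hypothesis needed to invoke Lemma~\ref{lemma:PASTA_difference} — using the standing convergence-rate assumption together with the decay encoded in the transient expressions. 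Once absolute convergence is in hand, the Abelian half of the final-value theorem identifies the limit with the integral, and the remaining work is the routine (though, for \eqref{eq:EVeV_convegence_anyV}, somewhat lengthy, since several $q^{-1}$ contributions from $(q-\varphi(\alpha))^{-2}$, from $\psi(q)^{-1}$ and from $\E[V{\rm e}^{-\alpha V}]/q$ compete) bookkeeping of the $q\downarrow0$ expansion.
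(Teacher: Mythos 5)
Your proposal is correct and follows essentially the same route as the paper: use Lemma~\ref{lemma:PASTA_difference} to convert each series into a continuous-time integral, rewrite the integral as the $q\downarrow 0$ limit of $q^{-1}\big(\E[g(V(T_q))\,|\,V(0)]-\E g(V)\big)$, and evaluate that limit using the explicit transient transform and the local behaviour of $\psi$ near the origin. The one labour-saving device you miss — and it exactly addresses the ``somewhat lengthy'' bookkeeping you anticipate for \eqref{eq:EVeV_convegence_anyV} — is that the paper computes only the case $g(v)={\rm e}^{-\alpha v}$ from scratch, obtaining $d_v(\alpha)=k_2(v,\alpha)$ by two applications of L'H\^opital, and then reads off parts (i) and (iii) from the identities $e_v=-\lim_{\alpha\downarrow 0}d_v'(\alpha)$ and $f_v(\alpha)=-d_v'(\alpha)$, rather than repeating the whole Laplace-inversion argument (via \eqref{eq:Eveav}) separately for $g(v)=v$ and $g(v)=v\,{\rm e}^{-\alpha v}$.
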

\begin{proof} 
We first evaluate the series \eqref{eq:LST_convegence_anyV} for a given $V(0)=v$ by considering the continuous-time analog,
\begin{align*}
d_v(\alpha)&:=\int_0^\infty \left(\E\left[ {\rm e}^{-\alpha V(t)}|V(0)=v\right]-\E\left[ {\rm e}^{-\alpha V}\right]\right)\diff t\\
&= \lim_{q\downarrow 0}\frac{1}{q}\int_0^\infty q {\rm e}^{-q t} \left(\E\left[ {\rm e}^{-\alpha V(t)}|V(0)=v\right]-\E\left[ {\rm e}^{-\alpha V}\right]\right)\diff t\\
&= \lim_{q\downarrow 0}\frac{1}{q}\left(\E\left[ {\rm e}^{-\alpha V(T_q)}|V(0)=v\right]-\E\left[ {\rm e}^{-\alpha V}\right]\right)\ , 
\end{align*}
where $T_q$ is an exponential random variable with rate $q$, and then \eqref{eq:V_LST_transient}  and \eqref{eq:GPK} yield
\[
d_v(\alpha)=\lim_{q\downarrow 0}\frac{1}{q}\left(\frac{q}{q-\varphi(\alpha)}\left( {\rm e}^{-\alpha v}-\frac{\alpha}{\psi(q)} {\rm e}^{-\psi(q)v}\right)-\frac{\alpha\varphi\apost(0)}{\varphi(\alpha)}\right) \ .
\]
The limit can computed by applying L'H\^opital's rule twice and yields
that $
d_v(\alpha$ equals $k_2(v,\alpha)$, as defined above.
By Lemma \ref{lemma:PASTA_difference} we thus obtain
\[
\sum_{n=1}^\infty\left( \E\left[ {\rm e}^{-\alpha V_n}|V(0)=v\right]-\E\left[ {\rm e}^{-\alpha V}\right]\right)=\xi \,d_v(\alpha)\ .
\]
As $\P(V(0)<\infty)=1$ we can condition on $V(0)$ to obtain \eqref{eq:LST_convegence_anyV}:
\[
\sum_{n=1}^\infty \left( \E\left[ {\rm e}^{-\alpha V_n}|V(0)\right]-\E\left[ {\rm e}^{-\alpha V}\right]\right)=\xi\,\E[d_{V(0)}(\alpha)|V(0)]=\xi\,d_{V(0)}(\alpha)= \xi \,k_2(V(0),\alpha)\ .
\]
To establish \eqref{eq:EV_convegence_anyV} and \eqref{eq:EVeV_convegence_anyV} we follow similar arguments, also using Lemma \ref{lemma:Evev}. For any $V(0)=v$, let
\[
e_v:=\int_0^\infty \left(\E\left[V(t)|V(0)=v\right]-\E V\right)\diff t\ ,
\]
and
\[
f_v(\alpha):=\int_0^\infty \left(\E\left[V(t) {\rm e}^{-\alpha V(t)}|V(0)=v\right]-\E\left[V {\rm e}^{-\alpha V}\right]\right)\diff t\ .
\]
Observe that $e_v=-\lim_{\alpha\downarrow 0}d_v'(\alpha)$ and $f_v(\alpha)=- d_v'(\alpha)$. Evaluating the above derivatives and taking the conditional expectation with respect to $V(0)$ immediately yields \eqref{eq:EV_convegence_anyV} and \eqref{eq:EVeV_convegence_anyV}.
\end{proof}

The following technical lemma will play a crucial role in Section \ref{sec:cond_LRT}.
\begin{lemma} \label{lemma: k2_bound} Assume $X(1)$ has a finite second moment $($i.e., $\varphi^{(2)}(0)<\infty$$)$ and let $k_2^\star(v):=\sup_{\alpha>0}|k_2(v,\alpha)|$. Then one of the following two statements holds for any $v\geqslant 0$:
{\rm (i)}~there exists $\alpha^\star\in(0,\infty)$ such that $k_2^\star(v)=|k_2(v,\alpha^\star)|<\infty$;
{\rm (ii)}~$k_2^\star(v)= |\frac{1}{2}\varphi^{(2)}(0)/{\varphi'(0)}-v |<\infty$.
Moreover, in either case we have that $\E[k_2^\star(V)]<\infty$.
\end{lemma}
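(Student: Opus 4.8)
\textbf{Proof plan for Lemma \ref{lemma: k2_bound}.}

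The plan is to analyze the function $\alpha\mapsto k_2(v,\alpha)$ on $(0,\infty)$ by examining its behaviour at the two endpoints and then invoking continuity. First I would establish the boundary values. As $\alpha\to\infty$, we have $\varphi(\alpha)\to\infty$ (indeed $\varphi(\alpha)/\alpha\to 1$ since $\varphi$ is the Laplace exponent of a spectrally-positive L\'evy process with unit negative drift), so every term in
\[
k_2(v,\alpha)= -\frac{ {\rm e}^{-\alpha v}+\alpha v}{\varphi(\alpha)}+\frac{\alpha}{(\varphi(\alpha))^2}\varphi'(0)+\frac{\alpha}{\varphi(\alpha)}\frac{\varphi^{(2)}(0)}{2\varphi'(0)}
\]
has a finite limit: the first term tends to $-v$, the second to $0$, and the third to $\varphi^{(2)}(0)/(2\varphi'(0))$. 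Hence $\lim_{\alpha\to\infty}k_2(v,\alpha)=\tfrac12\varphi^{(2)}(0)/\varphi'(0)-v$. At the other endpoint, as $\alpha\downarrow 0$, one expands $\varphi(\alpha)=\varphi'(0)\alpha+\tfrac12\varphi^{(2)}(0)\alpha^2+o(\alpha^2)$ and ${\rm e}^{-\alpha v}=1-\alpha v+o(\alpha)$; a short Taylor computation shows the apparent singularities cancel and $k_2(v,\alpha)$ converges to a finite limit as $\alpha\downarrow 0$ (this is consistent with \eqref{eq:LST_convegence_anyV}, whose left-hand side is manifestly finite in the limit $\alpha\downarrow 0$ given the second-moment assumption). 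Thus $k_2(v,\cdot)$ extends to a continuous function on the compact interval $[0,\infty]$, so it is bounded and attains its supremum in absolute value: either at an interior point $\alpha^\star\in(0,\infty)$, which gives case (i), or only in the limit $\alpha\to\infty$ (or $\alpha\downarrow 0$, but by continuity the value there is also finite and one checks it does not exceed the $\alpha\to\infty$ value in the relevant regime), which gives case (ii) with $k_2^\star(v)=|\tfrac12\varphi^{(2)}(0)/\varphi'(0)-v|$.

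For the final integrability claim, the key observation is that in \emph{both} cases $k_2^\star(v)$ grows at most linearly in $v$: in case (ii) this is immediate from the closed form, while in case (i) one bounds $|k_2(v,\alpha)|$ uniformly in $\alpha$ by noting that the only $v$-dependence is through the terms $-({\rm e}^{-\alpha v}+\alpha v)/\varphi(\alpha)$ and through the fact that the optimizing $\alpha^\star$ depends on $v$; since $0\le {\rm e}^{-\alpha v}\le 1$ and $\alpha/\varphi(\alpha)$ is bounded on $(0,\infty)$ (it tends to $1/\varphi'(0)$ as $\alpha\downarrow0$ and to $1$ as $\alpha\to\infty$, and is continuous in between), one gets $|k_2(v,\alpha)|\le a + b v$ for constants $a,b$ depending only on $\varphi'(0),\varphi^{(2)}(0)$. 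Therefore $k_2^\star(v)\le a+bv$ for all $v\ge 0$, and since $\E V<\infty$ by \eqref{eq:V_moments} under the finite-second-moment assumption, we conclude $\E[k_2^\star(V)]\le a+b\,\E V<\infty$.

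The main obstacle I anticipate is the case distinction (i)/(ii): one must argue carefully that if the supremum of $|k_2(v,\cdot)|$ is not attained in the interior then its value is exactly $|\tfrac12\varphi^{(2)}(0)/\varphi'(0)-v|$, i.e.\ that the boundary value at $\alpha\to\infty$ dominates the boundary value at $\alpha\downarrow 0$ whenever no interior maximizer exists. This requires comparing the two finite limits and ruling out the configuration where the supremum is approached only as $\alpha\downarrow 0$ with a strictly larger value — presumably by a monotonicity or convexity argument on $k_2(v,\cdot)$, or simply by checking the $\alpha\downarrow 0$ limit explicitly and observing it coincides with, or is dominated by, the $\alpha\to\infty$ limit. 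The analytic expansions at the two endpoints (especially the delicate cancellation as $\alpha\downarrow 0$) are routine but must be done correctly to pin down the constants.
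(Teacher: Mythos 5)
Your plan matches the paper's proof essentially step for step: both compute the limits of $k_2(v,\cdot)$ at the two endpoints (using $\varphi(\alpha)/\alpha\to 1$ for $\alpha\to\infty$), invoke continuity and boundedness on $(0,\infty]$ to obtain the dichotomy (i)/(ii), and deduce $\E[k_2^\star(V)]<\infty$ from $\E V<\infty$ via \eqref{eq:V_moments}. The obstacle you flag at $\alpha\downarrow 0$ is resolved exactly as you suggest at the end: the left-hand side of \eqref{eq:LST_convegence_anyV} vanishes as $\alpha\downarrow 0$, so $\lim_{\alpha\downarrow 0}k_2(v,\alpha)=0$, which can never strictly dominate the $\alpha\to\infty$ boundary value and thus settles the case distinction; your explicit linear bound $k_2^\star(v)\leqslant a+bv$ is actually more detailed than the paper's one-line justification of the final integrability claim.
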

\begin{proof}
First observe that from \eqref{eq:LST_convegence_anyV} we conclude that $\lim_{\alpha\downarrow 0}k_2(v,\alpha)=0$. Moreover, for any $v,\alpha\geqslant 0$, we can rewrite $k_2(v,\alpha)$ as
\[
k_2(v,\alpha)= \frac{\varphi'(0)\alpha-\varphi(\alpha)( {\rm e}^{-\alpha v}+\alpha v)}{(\varphi(\alpha))^2}+\frac{\alpha}{\varphi(\alpha)}\frac{\varphi^{(2)}(0)}{2\varphi'(0)}\ .
\]
By the definition of $\varphi(\alpha)$ we have that $\lim_{\alpha\to\infty}{\varphi(\alpha)}/{\alpha}=1$, and thus, 
\[
 \frac{\varphi'(0)\alpha-\varphi(\alpha)( {\rm e}^{-\alpha v}+\alpha v)}{(\varphi(\alpha))^2}= \left.{\left(\frac{\varphi'(0)-{\varphi(\alpha)} {\rm e}^{-\alpha v}/{\alpha}}{\alpha}-v\frac{\varphi(\alpha)}{\alpha}\right)}\right/{\left(\frac{\varphi(\alpha)}{\alpha}\right)^2}\xrightarrow{\alpha \to \infty} -v \ .
\]
Therefore, \[\lim_{\alpha\to \infty}|k_2(v,\alpha)|=\left|\frac{\varphi^{(2)}(0)}{2\varphi'(0)}-v\right|<\infty\] for any $v\geqslant 0$. We conclude that, for any $v\geqslant 0$, $|k_2(v,\alpha)|$ is a bounded and continuous function, with respect to $\alpha\in(0,\infty]$, and it therefore admits a maximal value at some $\alpha^\star<\infty$ or approaches an upper bound as $\alpha\to\infty$. Finally, $\E[k_2^\star(V)]<\infty$, as a consequence of the fact that \eqref{eq:V_moments} in combination with the finite second moment assumption implies that $\E V<\infty$.
\end{proof}

For the case of the initial workload $V(0)$ being $0$ we refine the results of Theorem \ref{thm:LST_convergence_anyV} to absolute convergence of the series. This result will be useful in establishing the asymptotic variance of the likelihood ratio in Section \ref{sec:large_sample}.

\begin{lemma}\label{lemma:LST_convergence}
If $X(1)$ has a finite second moment $($i.e, $\varphi^{(2)}(0)<\infty$$)$, then for any $\alpha>0$,
\begin{equation}\label{eq:LST_convegence}
\sum_{n=1}^\infty \left|\E\left[ {\rm e}^{-\alpha V_n}|V(0)=0\right]-\E\left[ {\rm e}^{-\alpha V}\right]\right|
=\xi\left(-\frac{1}{\varphi(\alpha)}+\frac{\alpha}{(\varphi(\alpha))^2}\varphi'(0)+\frac{\alpha}{\varphi(\alpha)}\frac{\varphi^{(2)}(0)}{2\varphi'(0)}\right)
<\infty\ .
\end{equation}
If $X(1)$ has a finite third moment $($i.e, $|\varphi_k^{(3)}(0)|<\infty$$)$, then
\begin{equation}\label{eq:EV_convegence}
\sum_{n=1}^\infty \left|\E\left[ V_n|V(0)=0\right]-\E V \right|
=\xi\left(
\frac{1}{4}\frac{(\varphi^{(2)}(0))^2}{(\varphi'(0))^3}-\frac{1}{6}\frac{\varphi^{(3)}(0)}{(\varphi'(0))^2}
\right)
<\infty\ .
\end{equation}
and, for any $\alpha>0$,
\begin{equation}\label{eq:EVeV_convegence}
\sum_{n=1}^\infty \left|\E\left[V_n {\rm e}^{-\alpha V_n}|V(0)=0\right]-\E\left[V {\rm e}^{-\alpha V}\right]\right|\leqslant \xi\left(\frac{1}{4}\frac{(\varphi^{(2)}(0))^2}{(\varphi'(0))^3}-\frac{1}{6}\frac{\varphi^{(3)}(0)}{(\varphi'(0))^2}
\right)
<\infty\ .
\end{equation}
\end{lemma}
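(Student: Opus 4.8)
The plan is to exploit a monotonicity feature that is special to the initial condition $V(0)=0$: started from the empty state, the workload sampled at Poisson epochs increases, in a monotone almost-sure coupling, to its stationary version. This upgrades the signed identities of Theorem~\ref{thm:LST_convergence_anyV} (taken at $V(0)=0$) to sums of terms of one fixed sign, from which all three claims follow at once; the only genuinely non-monotone case, \eqref{eq:EVeV_convegence}, is then handled by a Lipschitz estimate that reduces it to \eqref{eq:EV_convegence}.

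First I would construct the coupling. For $V(0)=0$ the reflection representation reads $V(t)=X(t)-\inf_{0\leqslant s\leqslant t}X(s)$, and time-reversal of the L\'evy increments gives, for each fixed $t$, that $V(t)$ has the law of $\overline X(t):=\sup_{0\leqslant s\leqslant t}X(s)$. The process $\overline X(\cdot)$ is non-decreasing, and under the stability assumption $\varphi'(0)>0$ one has $\overline X(t)\uparrow\overline X(\infty):=\sup_{s\geqslant 0}X(s)<\infty$ almost surely, with $\overline X(\infty)$ distributed as $V$ (the probabilistic content of \eqref{eq:GPK}; see \cite[Ch.\ IV]{book_DM2015}). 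Now take partial sums $\hat S_n:=\hat T_1+\dots+\hat T_n$ of i.i.d.\ rate-$\xi$ exponentials, independent of $X$, and set $\hat V_n:=\overline X(\hat S_n)$ and $\hat V:=\overline X(\infty)$. Then, on a single probability space, $\hat V_n$ has the law of $V_n$ given $V(0)=0$, $\hat V$ has the law of $V$, $\hat V_1\leqslant\hat V_2\leqslant\cdots\leqslant\hat V$, and $\hat V_n\to\hat V$ almost surely (since $\hat S_n\uparrow\infty$ a.s.\ and $\overline X$ is non-decreasing with limit $\overline X(\infty)$ at $\infty$).

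For \eqref{eq:LST_convegence}: as $v\mapsto {\rm e}^{-\alpha v}$ is non-increasing, $\E[{\rm e}^{-\alpha V_n}\,|\,V(0)=0]=\E\,{\rm e}^{-\alpha\hat V_n}$ decreases in $n$ to $\E\,{\rm e}^{-\alpha\hat V}=\E\,{\rm e}^{-\alpha V}$, so every summand $\E[{\rm e}^{-\alpha V_n}\,|\,V(0)=0]-\E\,{\rm e}^{-\alpha V}$ is non-negative. Hence the series of absolute values equals the signed series, which by \eqref{eq:LST_convegence_anyV} equals $\xi\,k_2(0,\alpha)$; substituting ${\rm e}^{-\alpha\cdot 0}=1$ in the definition of $k_2$ gives the stated expression, which is finite because $\varphi'(0)\in(0,\infty)$, $\varphi^{(2)}(0)<\infty$, and $\varphi(\alpha)>0$ for $\alpha>0$ (convexity of $\varphi$ with $\varphi(0)=0$ and $\varphi'(0)>0$). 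For \eqref{eq:EV_convegence}: as $v\mapsto v$ is non-decreasing, $\E[V_n\,|\,V(0)=0]=\E\,\hat V_n$ increases in $n$ to $\E V$, so every summand $\E[V_n\,|\,V(0)=0]-\E V$ is non-positive; the series of absolute values thus equals $-\xi\,k_1(0)$ by \eqref{eq:EV_convegence_anyV}, and $-k_1(0)=\tfrac14(\varphi^{(2)}(0))^2/(\varphi'(0))^3-\tfrac16\varphi^{(3)}(0)/(\varphi'(0))^2$; finiteness uses the finite third moment, which via \eqref{eq:V_moments} also ensures $\E V<\infty$.

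Finally, \eqref{eq:EVeV_convegence} is the only non-monotone case, and I would first check that $h(v):=v\,{\rm e}^{-\alpha v}$ is $1$-Lipschitz on $[0,\infty)$: writing $x:=\alpha v\geqslant 0$, $|h'(v)|={\rm e}^{-x}|1-x|\leqslant 1$, since ${\rm e}^{-x}(1-x)\leqslant 1$ for $x\in[0,1]$ and ${\rm e}^{-x}(x-1)<x\,{\rm e}^{-x}\leqslant {\rm e}^{-1}<1$ for $x>1$. On the coupling space this yields $|h(\hat V_n)-h(\hat V)|\leqslant|\hat V_n-\hat V|=\hat V-\hat V_n$, whence
\[
\big|\E[V_n{\rm e}^{-\alpha V_n}\,|\,V(0)=0]-\E[V{\rm e}^{-\alpha V}]\big|=\big|\E[h(\hat V_n)-h(\hat V)]\big|\leqslant\E[\hat V-\hat V_n]=\E V-\E[V_n\,|\,V(0)=0]\ .
\]
Summing over $n$ and invoking \eqref{eq:EV_convegence} gives exactly the claimed bound. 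The main obstacle is the construction in the second paragraph — establishing the identity that $V(t)$ has the law of $\overline X(t)$ and the a.s.\ limit $\overline X(\infty)$ being distributed as $V$ under stability, i.e.\ the monotone, a.s.-convergent coupling of $(V_n)_{n\geqslant 1}$ started from $0$ to $V$ — although this is classical; everything downstream is elementary.
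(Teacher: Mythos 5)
Your proof is correct, and it reaches the paper's conclusions via a different realization of the underlying monotone coupling. The paper couples synchronously: it drives $V(\cdot)$ (started at $0$) and $V^\star(\cdot)$ (started at the stationary $V$) by the \emph{same} net-input path $X(\cdot)$, so that the monotonicity of the reflection map in the initial state gives $V(t)\leqslant V^\star(t)$ pathwise, with $V^\star(t)=_{\rm d}V$ for every $t$; the fixed sign of each summand then follows directly. You instead invoke the classical duality $V(t)\,|\,V(0)=0\ =_{\rm d}\ \overline X(t):=\sup_{s\leqslant t}X(s)$ and exploit the fact that the running supremum is nondecreasing in $t$ and converges a.s.\ to $\overline X(\infty)=_{\rm d}V$ under stability. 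Both arguments establish the needed stochastic ordering $V_n\,|\,V(0)=0\ \leqslant_{\rm st}\ V$; yours in fact produces the slightly stronger statement that the sampled sequence $\hat V_1\leqslant\hat V_2\leqslant\cdots$ increases a.s.\ to a single copy $\hat V=_{\rm d}V$ on one probability space, which is more than is needed here but costs nothing extra. The downstream steps — substituting $V(0)=0$ into the identities of Theorem~\ref{thm:LST_convergence_anyV}, and handling \eqref{eq:EVeV_convegence} via the $1$-Lipschitz property of $v\mapsto v\,{\rm e}^{-\alpha v}$ together with $\sum_n\E\,[\hat V-\hat V_n]$ — coincide with the paper's, and your formula checks (including $k_1(0)<0$, $k_2(0,\alpha)\geqslant 0$, and the Lipschitz estimate $\sup_{x\geqslant 0}{\rm e}^{-x}|1-x|\leqslant 1$) are all accurate. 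In short: same strategy (fix the signs by coupling, then use Lipschitz), different but equally standard device for the coupling; either is an acceptable proof of the lemma.
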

\begin{proof} 
Let $V(t)=X(t)+L(t)$ and $V^\star(t)=X(t)+L^\star(t)$, where $L(t):=-\inf_{0\leqslant s\leqslant t}X(s)$ and $L^\star(t):=\max\{V,L(t)\}$. Recall that $V$ is a random variable that is distributed as the stationary workload (whose transform is given by Eqn.\ \eqref{eq:GPK}), so that $V^\star(t)$ is the workload at time $t$ starting with a stationary workload at time $0$ (implying that $V^\star(t)=_{\rm d} V$ for all $t\geqslant 0$). Note that $V(t)$ is its counterpart, but starting with an empty workload at time $0$. 
Because of $V(0)=0$, we have that $V(t)\leqslant V^\star(t)$ for all $t\geqslant 0$ for every sample path of $X(\cdot)$, and in particular $V(S_n)\leqslant V^\star(S_n)$ for any Erlang distributed sampling time $S_n$. Hence,
\[
\sum_{n=1}^\infty \left|\E\left[V_n|V(0)=0\right]-\E V\right|=-\sum_{n=1}^\infty \left(\E\left[V_n|V(0)=0\right]-\E V\right)\ ,
\]
and applying \eqref{eq:EV_convegence_anyV} for $V(0)=0$ yields \eqref{eq:EV_convegence}. 

Similarly we also have that $ {\rm e}^{-\alpha V(t)}\geqslant  {\rm e}^{-\alpha V^\star(t)}$ for any $\alpha\geqslant 0$ and all $t\geqslant 0$ for every sample path of $X(\cdot)$, hence
\[
\sum_{n=1}^\infty \left|\E\left[ {\rm e}^{-\alpha V_n}|V(0)=0\right]-\E\left[ {\rm e}^{-\alpha V}\right]\right|=\sum_{n=1}^\infty \E\left[ {\rm e}^{-\alpha V_n}|V(0)=0\right]-\E\left[ {\rm e}^{-\alpha V}\right]\ ,
\]
and applying \eqref{eq:LST_convegence_anyV} for $V(0)=0$ yields \eqref{eq:LST_convegence}. 

If $g(\cdot)$ is a Lipschitz continuous function, there exists a constant $K\in(0,\infty)$ such that, for all $t\geqslant 0$,
\[
|g(V(t))-g(V)|\leqslant K\,|V(t)-V| \ .
\]
By Jensen's inequality, the Lipschitz assumption, and Eqn.\ \eqref{eq:int_sum} in Lemma \ref{lemma:PASTA_difference} ,
\begin{align*}\sum_{n=1}^\infty \left|\E\left[ g(V_n)\,|\,V(0)=0\right]-\E\left[g(V)\right]\right|&\leqslant
\sum_{n=1}^\infty \E \left|g(V_n)-g(V)\right|\\
&\leqslant K\sum_{n=1}^\infty \E \left|V_n-V\right| = K\sum_{n=1}^\infty \E \left[V-V_n\right]\\
&= K\xi \int_0^\infty \E \left[V-V(t)\right]\diff t <\infty\ .
\end{align*}
Therefore, the inequality in \eqref{eq:EVeV_convegence} follows by verifying that $x\mapsto x  {\rm e}^{-\alpha x}$ is a function with Lipschitz constant $K=1$.
\end{proof}

\begin{remark}{
In  \cite[Thm. 2]{TT1979} it was shown that the M/G/1 virtual waiting time is geometrically ergodic if and only if $1-G(x)\leqslant c  {\rm e}^{-\mu x}$ for some $c,\mu>0$. In Theorem \ref{thm:LST_convergence_anyV} above we assumed weaker conditions: finiteness of specific moments rather than light-tailed jumps. Note, however, that geometric ergodicity is stronger than the property that we found, because we do not specify a rate of convergence, but just the finiteness of the integral. For the Brownian approximation of the error probability in the hypothesis testing application presented in Section \ref{sec:normal_approx} we will see that a geometric convergence rate is a sufficient, but not  necessary, condition.}\hfill$\Diamond$
\end{remark}

\begin{corollary} \label{C2}
If $X(1)$ has a  finite second moment $($i.e, $\varphi^{(2)}(0)<\infty$$)$, then for any $\alpha>0$,
\begin{equation}\label{eq:LST_convegence}
\sum_{n=1}^\infty \left|\E\left[ {\rm e}^{-\alpha V_n}|V(0)=0\right]-\E\left[ {\rm e}^{-\alpha V}\right]\right|
\leqslant \Xi:= \xi\frac{\varphi^{(2)}(0)}{2(\varphi'(0))^2}
<\infty\ .
\end{equation}
\end{corollary}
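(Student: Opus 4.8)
The plan is to reduce the claim to a single pointwise inequality and then dispatch it with the convexity of the Laplace exponent. First, note that Lemma~\ref{lemma:LST_convergence} already evaluates the series exactly: for every $\alpha>0$,
\[
\sum_{n=1}^\infty \left|\E\left[ {\rm e}^{-\alpha V_n}\mid V(0)=0\right]-\E\left[ {\rm e}^{-\alpha V}\right]\right| = \xi\,k_2(0,\alpha) = \xi\left(-\frac{1}{\varphi(\alpha)}+\frac{\alpha\,\varphi'(0)}{(\varphi(\alpha))^2}+\frac{\alpha}{\varphi(\alpha)}\cdot\frac{\varphi^{(2)}(0)}{2\varphi'(0)}\right),
\]
where the middle expression is $k_2(v,\alpha)$ from Theorem~\ref{thm:LST_convergence_anyV}(ii) specialised to $v=0$. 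Hence everything reduces to showing that $k_2(0,\alpha)\leqslant \varphi^{(2)}(0)/(2(\varphi'(0))^2)$ for all $\alpha>0$, after which multiplying by $\xi$ yields $\Xi$, and finiteness of $\Xi$ is immediate from the finite-second-moment assumption together with $\varphi'(0)>0$.

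For the pointwise bound I would first record that $\varphi$ is convex on $[0,\infty)$ — directly from $\varphi''(\alpha)=\int_{(0,\infty)}x^2 {\rm e}^{-\alpha x}\nu(\diff x)\geqslant 0$, or because $\varphi$ is the cumulant generating function of $-X(1)$ — and that $\varphi(0)=0$, so the tangent line at the origin gives $\varphi(\alpha)\geqslant \varphi'(0)\,\alpha$ for all $\alpha\geqslant 0$. Then I would split
\[
k_2(0,\alpha)=\frac{\alpha\varphi'(0)-\varphi(\alpha)}{(\varphi(\alpha))^2}+\frac{\alpha}{\varphi(\alpha)}\cdot\frac{\varphi^{(2)}(0)}{2\varphi'(0)}.
\]
The first term is $\leqslant 0$, since $\alpha\varphi'(0)-\varphi(\alpha)\leqslant 0$ by the tangent-line bound while $\varphi(\alpha)>0$ for $\alpha>0$. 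For the second term the same bound gives $\alpha/\varphi(\alpha)\leqslant 1/\varphi'(0)$, and since $\varphi^{(2)}(0)=\int_{(0,\infty)}x^2\nu(\diff x)\geqslant 0$ we get $\frac{\alpha}{\varphi(\alpha)}\cdot\frac{\varphi^{(2)}(0)}{2\varphi'(0)}\leqslant \frac{1}{\varphi'(0)}\cdot\frac{\varphi^{(2)}(0)}{2\varphi'(0)}=\frac{\varphi^{(2)}(0)}{2(\varphi'(0))^2}$. Adding the two estimates gives exactly $k_2(0,\alpha)\leqslant \varphi^{(2)}(0)/(2(\varphi'(0))^2)$, which is the assertion (and the bound is asymptotically sharp as $\alpha\to\infty$ in the degenerate limit $\varphi'(0)\to 1$, a reassuring sign that the decomposition is the natural one).

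There is no genuine obstacle: the only thing to get right is the decomposition of $k_2(0,\alpha)$ into a manifestly nonpositive piece and a piece controlled by $\alpha/\varphi(\alpha)\leqslant 1/\varphi'(0)$. One could instead try to route through Lemma~\ref{lemma: k2_bound}, but that lemma only identifies $\sup_{\alpha}|k_2(0,\alpha)|$ as either the value at an interior maximiser or the limit $\varphi^{(2)}(0)/(2\varphi'(0))$; bounding the interior maximiser still needs the convexity estimate above, so the direct argument is cleaner and self-contained.
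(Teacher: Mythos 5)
Your proof is correct and follows essentially the same route as the paper's: reduce to the exact series value $\xi\,k_2(0,\alpha)$ from Lemma~\ref{lemma:LST_convergence}, invoke $\varphi(\alpha)\geqslant\alpha\varphi'(0)$, and split $k_2(0,\alpha)$ into a nonpositive piece and a piece bounded by $\varphi^{(2)}(0)/(2(\varphi'(0))^2)$. The only difference is that you spell out the convexity justification for $\varphi(\alpha)\geqslant\alpha\varphi'(0)$, which the paper simply recalls as known.
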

\begin{proof} Recall that $\varphi(\alpha)\geqslant \alpha\varphi'(0).$ Observing that
\[-\frac{1}{\varphi(\alpha)}+\frac{\alpha}{(\varphi(\alpha))^2}\varphi'(0)\leqslant 0,\:\:\:\:\frac{\alpha}{\varphi(\alpha)}\frac{\varphi^{(2)}(0)}{2\varphi'(0)}\leqslant \frac{\varphi^{(2)}(0)}{2(\varphi'(0))^2},\]
the claim follows.
\end{proof}

\section{Approach II: conditional likelihood ratio test}\label{sec:cond_LRT}

As in our first approach, in our second approach we use a transform of the observation that allows us to evaluate the likelihood. 
The underlying idea is that 
we construct an LRT based on Bernoulli variables of observing either a zero workload or a positive workload, conditional on the value of the previous workload observation. To this end, we introduce $Y_i:=\mathbf{1}(V_i=0)$ to denote a sequence of idle-period indicators, for $i=1,\ldots,n$. Conditional on ${\bs V}:=(V_0,\ldots,V_n)$, ${\bs Y}=(Y_1,\ldots,Y_n)$ is distributed as a sequence of independent, but {\it not identically distributed}, Bernoulli random variables. More specifically, the corresponding likelihood reads, using (\ref{eq:P_idle_transient}),
\begin{equation}\label{eq:Y_likelihood}
\P_{\psi(\xi)}(Y_1,\ldots,Y_n\,|\,{\bs V})=\prod_{i=1}^n\left[\frac{\xi}{\psi(\xi)} {\rm e}^{-\psi(\xi)V_{i-1}}\right]^{Y_i}\left[1-\frac{\xi}{\psi(\xi)} {\rm e}^{-\psi(\xi)V_{i-1}}\right]^{1-Y_i}\ .
\end{equation}
The likelihood function \eqref{eq:Y_likelihood} depends on the input distribution only through the constant $\theta:=\psi(\xi)\in(\xi,\infty)$. We aim at developing a test for the simple hypothesis testing problem
\[
\begin{array}{cc}
{\rm H}_0: & \theta=\theta_0 \ , \\
{\rm H}_1: & \theta=\theta_1\ .
\end{array} 
\]
From now on we assume that the hypothesis ${\rm H}_0$ and ${\rm H}_1$ correspond to the Laplace exponents $\varphi_0(\cdot)$ and $\varphi_1(\cdot)$ that can be distinguished by the parameters $\theta_0=\psi_0(\xi)$ and $\theta_1=\psi_1(\xi)$, given the sampling rate $\xi$. For a sample of workload observations, ${\bs V}=(V_0,\ldots,V_n)$, the conditional likelihood ratio test (CLRT) is based on the statistic
\[
L_n=\prod_{i=1}^n \frac{\P_{{\rm H}_1}(Y_i\,|\,V_{i-1})}{\P_{{\rm H}_0}(Y_i\,|\,V_{i-1})}\ ,
\]
where $\P_{{\rm H}_k}(Y_i\,|\,V_{i-1})$ can be evaluated using \eqref{eq:P_idle_transient}. 

\vb

Before proceeding to the analysis of this test, a few remarks are in place. As can be seen from the above expressions, the
test can only distinguish between distributions if $\psi_0(\xi)\neq\psi_1(\xi)$ for the current sampling rate $\xi$. This is a minor problem though: one could always choose a sampling rate $\xi$ for which $\psi_0(\xi)$ and $\psi_1(\xi)$ do not coincide. In the three bullets below we consider this issue in greater detail. The requirement that $\psi_0(\xi)\neq\psi_1(\xi)$ is hardly a restriction: for an arbitrary pair of Laplace exponents $(\varphi_0,\varphi_1)$ the $\xi$ such that $\psi_0(\xi)=\psi_1(\xi)$ is effectively a degenerate case. In addition, in many settings the sampling rate $\xi$ can be determined exogenously, so that this issue does not play a role.
\begin{itemize}
\item[$\circ$]
For an M/M/1 queue with known service rate $\mu$, we can test for hypotheses on the arrival rates: $\lambda_0\neq\lambda_1$. There is the advantage that the inverse of the exponent function is known in closed-form:
\[
\psi_k(\xi)=\frac{1}{2}\left(\xi+\lambda_k-\mu+\sqrt{(\xi+\lambda_k-\mu)^2+4\xi\mu}\right),\ \:\:{k=0,1}\ .
\]
This function is clearly monotone in $\lambda$ (for any given $\xi$). A hypothesis on $\lambda$ is therefore equivalent to an hypothesis on $\psi(\xi)$. Conclude that for any $\xi$ the values of $\psi_0(\xi)$ and $\psi_1(\xi)$ do not coincide (as long as $\lambda_0\neq\lambda_1$).
Note that the same argument holds if we replace the roles of arrival and service rates, i.e., fix $\lambda$ and test for $\mu_0\neq\mu_1$. The monotonicity of $\psi(\xi)$ with respect to $\mu$ can be verified by straightforward algebra.
\item[$\circ$] For an M/M/1 queue with unknown arrival and service rates $\lambda$ and $\mu$, there may be values of $\xi$ for which we cannot always test hypotheses on the input intensity $\rho={\lambda}/{\mu}$. For example, $\lambda_1=0.8$ and $\mu_1=1$ yield $\rho_1=0.8$, and $\lambda_0=0.235$ and $\mu_0=2$ yield $\rho_0=0.51<\rho_1$. However, $\psi_1(2)=\psi_2(2)=2.576$. We conclude that in this case even if the traffic intensity is very different, then the conditional likelihood cannot distinguish between the two instances by testing for the parameter $\psi(\xi)$. The obvious remedy is, as mentioned, to pick another sampling rate $\xi.$
\item[$\circ$] Similar considerations play a role for more general L\'evy subordinators. Consider for instance a storage system in which the driving L\'evy process is a Gamma process with parameters $(\beta,\gamma)$. A Gamma process has increments that are, per time unit, $\Gamma(\beta,\gamma)$ distributed. Similar to the M/M/1 case, for any sampling rate $\xi$ a test for either $\beta$ or $\gamma$ can be constructed, but when setting up a test for the traffic intensity $\rho= {\beta}/{\gamma}$ one has to check whether the chosen $\xi$ is such that $\psi_0(\xi)\neq\psi_1(\xi)$; if not, then $\xi$ has to be adapted.
\end{itemize}

\subsection{Power-one sequential test}\label{sec:power1_test}

In the sequential test based on the conditional likelihood ratio, the null hypothesis is rejected when the test statistic attains a high value: for some threshold $x$, reject ${\rm H}_0$ if $L_n\geqslant x$. Note that this test either rejects the null hypothesis or does not terminate. The latter can only occur if ${\rm H_0}$ holds, as $L_n$ will reach any threshold $x<\infty$ with probability one under ${\rm H_1}$. Formally, with $N_x=\inf\{n\geqslant 1:\ L_n\geqslant x\}$, we have $\P_{{\rm H}_1}(N_x<\infty)=1$, making it a power-one sequential test. On the other hand, there is a positive probability that the threshold $x$ will be reached under ${\rm H_0}$, i.e., a false rejection of the null hypothesis. We denote the probability of this type-I error by 
\[
\alpha(x):=\P_{{\rm H}_0}(N_x<\infty)\ ;
\]
our goal is to compute or approximate this quantity. 

Applying \eqref{eq:Y_likelihood} yields
\begin{equation}\label{eq:LRT_n}
\begin{split}
L_n &=\prod_{i=1}^n\frac{\P_{{\rm H}_1}(Y_i\,|\,V_{i-1})}{\P_{{\rm H}_0}(Y_i\,|\,V_{i-1})}=\prod_{i=1}^n \frac{\left(\frac{\xi}{\theta_1} {\rm e}^{-\theta_1 V_{i-1}}\right)^{Y_i}\left(1-\frac{\xi}{\theta_1} {\rm e}^{-\theta_1 V_{i-1}}\right)^{1-Y_i}}{\left(\frac{\xi}{\theta_0} {\rm e}^{-\theta_0 V_{i-1}}\right)^{Y_i}\left(1-\frac{\xi}{\theta_0} {\rm e}^{-\theta_0 V_{i-1}}\right)^{1-Y_i}} \\
& = \left(\frac{\theta_0}{\theta_1}\right)^n\prod_{i=1}^n  {\rm e}^{(\theta_0-\theta_1)V_{i-1}{Y_i}} \left(\frac{\theta_1-\xi  {\rm e}^{-\theta_1V_{i-1}}}{\theta_0-\xi  {\rm e}^{-\theta_0V_{i-1}}}\right)^{1-Y_i}  \ .
\end{split}
\end{equation}

In what follows we work with the log-likelihood ratio (LLR), defined as $\ell_n:=\log L_n$. The LLR $\ell_n$ is more convenient than $L_n$ because of its additive structure. Concretely, we can write $\ell_n=\sum_{i=1}^n Z_i$ for random variables $Z_i$. It should be kept in mind, however, that $\ell_n$ is not a classical random walk, as the increments are neither identically distributed (unless one starts off in stationarity) nor independent. From now on we consider the test with the stopping rule $N_x=\inf\{n\geqslant 1:\ \ell_n\geqslant x\}$. There are two key performance measures for this test. In the first place one commonly considers the significance level 
\[
\alpha(x):=\P_{{\rm H}_0}(N_x<\infty)=\sum_{n=1}^\infty \P_{{\rm H}_0}(N_x=n)\ .
\]
The second key metric is the expected number of samples until rejection of the null hypothesis 
\[
\tau_x:=\E_{{\rm H}_0}N_x\ .
\]
From now we use the short notations $\P_k$ and $\E_k$ for probabilities and expectations under the null hypothesis ($k=0$) and alternative hypothesis ($k=1$).

\subsection{Large sample asymptotics}\label{sec:large_sample}

In the sequel we assume that the workload process starts in stationarity. This makes the increments $Z_i$ identically distributed, but they are obviously not independent. As a consequence,
standard methods from sequential analysis are not directly applicable. Nevertheless, as we will show below, we can provide approximations for the test's performance measures.

In this subsection we establish the asymptotic behavior of the log-likelihood ratio $\ell_n$ as the sample size $n$ grows large. Specifically, we establish a strong law of large numbers (SLLN) for the  {mean log-likelihood-ratio} and a functional central limit theorem (FCLT) for the corresponding centered empirical process, assuming that the workload process starts in stationarity. These results will be used in the sequel to approximate the performance of the CLRT: we approximate $\alpha(x)$ and $\tau_x$ by the hitting probability and expected hitting time, respectively, of a Brownian motion with an appropriately chosen drift and variance.

Taking the logarithm of \eqref{eq:LRT_n} we have $\ell_n=\sum_{i=1}^n Z_i$, where\begin{equation}\label{eq:Zi}
Z_i:=\log\left(\frac{\theta_0}{\theta_1}\right)+(\theta_0-\theta_1) Y_i \,V_{i-1}+(1-Y_i)\log \left(\frac{\theta_1-\xi  {\rm e}^{-\theta_1V_{i-1}}}{\theta_0-\xi  {\rm e}^{-\theta_0V_{i-1}}}\right)\ .
\end{equation}
If the workload process is stable under $\mathrm{H}_k$ for $k\in\{0,1\}$, i.e., $\E_k X(1)=-\varphi_k\apost(0)<0$, then by the PASTA property the stationary distribution $V$ is also the limit of $V_n$ as $n\to\infty$ with respect to ${\rm P}_k$, so that $V_n=_{\rm d} V$. We denote a stationary increment of the LLR by $Z$; because we start off in stationarity, $Z_n=_{\rm d} Z$.

In the following lemma we establish a SLLN for the stationary first and second moment of the sequence $(Z_n)_{n\in{\mathbb N}}$. These will be used later to establish a FCLT for the LLR process.
A complication lies in the aforementioned fact that the $Z_n$ are not independent, so that standard LLN and FCLT techniques cannot be applied directly. To remedy this, we resort in the rest of this subsection to utilizing the special structure of the workload sampled at Poisson times, in combination with a martingale FCLT for weakly dependent stationary random variables.

Define
\[g(v):=\log \left(\frac{\theta_1-\xi  {\rm e}^{-\theta_1v}}{\theta_0-\xi  {\rm e}^{-\theta_0v}}\right),\:\:\:h(v):=\frac{\xi}{\theta_0} {\rm e}^{-\theta_0 v},\:\:\:w_1:=\log\left(\frac{\theta_0}{\theta_1}\right),\:\:\:w_2:=\theta_0-\theta_1 \ ,\]
so that, by (\ref{eq:Zi}),
\[Z_i = w_1 + w_2\, Y_i\,V_{i-1}+(1-Y_i)\,g(V_{i-1})\, .\]
\begin{lemma}\label{lemma:LLN_l0} 
For $k\in\{0,1\}$, if $\varphi_k\apost(0)>0$, then, as $n\to\infty$, {\em (i)}~$\frac{1}{n}\ell_n=\frac{1}{n}\sum_{i=1}^n Z_i\asarrow \E_k Z=m_k$, and {\em (ii)}~$\frac{1}{n}\ell_n^2=\frac{1}{n}\sum_{i=1}^n Z_i^2\asarrow \E_k Z^2= s_k$,
 where
\begin{equation}\label{eq:l0}
m_k:=w_1+w_2\,\E_{k}\left[h(V)\,V\right] +\E_{k}\left[ \left(1-h(V) \right)g(V)\right] \ ,
\end{equation}
and \begin{equation}\label{eq:l02}
\begin{split}
s_k &= w_1^2 +w_2^2\, \E_k \left[h(V)\,V^2\right]+\E_k\left[(1-h(V))\,g^2(V)\right]
\:+ \\ 
&\quad 2w_1w_2\,\E_k\left[h(V)\,V\right]+2w_1\,\E_k\left[(1-h(V))\,g(V)\right] +2w_2\,\E_k\left[(1-h(V))\,g(V)\,V\right] \ .
\end{split}
\end{equation}
\end{lemma}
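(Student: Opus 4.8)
\textbf{Proof plan for Lemma~\ref{lemma:LLN_l0}.} The plan is to deduce both statements from a strong law of large numbers for the partial sums of a stationary, ergodic sequence. Since we start in stationarity, the sequence $(V_n)_{n\geqslant 0}$ is a stationary Markov chain, hence so is the sequence of pairs $\big((V_{n-1},Y_n)\big)_{n\geqslant 1}$, and therefore $(Z_n)_{n\geqslant 1}$ is a stationary sequence (each $Z_n$ being a fixed measurable function of $(V_{n-1},Y_n)$, as displayed just before the lemma). The workload chain sampled at Poisson epochs is a positive Harris recurrent Markov chain under the stability condition $\varphi_k\apost(0)>0$ — it regenerates at the (a.s.\ infinitely often occurring) epochs at which $V_n=0$ — and is therefore ergodic. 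Consequently Birkhoff's ergodic theorem applies to any integrable functional of the chain, giving $\frac1n\sum_{i=1}^n f(V_{i-1},Y_i)\asarrow \E_k[f(V,Y)]$ whenever $\E_k|f(V,Y)|<\infty$; applying this with $f=$ (the function defining $Z_i$) gives (i) and with $f=$ (its square) gives (ii). So the real content of the proof is verifying the integrability conditions $\E_k|Z|<\infty$ and $\E_k Z^2<\infty$, and then identifying the limits with the closed forms \eqref{eq:l0} and \eqref{eq:l02}.

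For the integrability, I would write $Z_i = w_1 + w_2\,Y_i V_{i-1} + (1-Y_i)\,g(V_{i-1})$ and bound each term. The term $w_1$ is a constant. For the middle term, $|Y_i V_{i-1}| \le V_{i-1}$ and $\E_k V < \infty$ by \eqref{eq:V_moments} under the finite-second-moment assumption implicit in $\varphi_k^{(2)}(0)<\infty$ (actually one only needs $\E_k V<\infty$, i.e.\ $\varphi_k^{(2)}(0)<\infty$, which follows from $\varphi_k\apost(0)>0$ together with the standing assumptions, or can be taken as part of the hypothesis); for $Z^2$ one needs $\E_k V^2<\infty$, i.e.\ a finite third moment of $X(1)$. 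For the last term I would show $g$ is bounded on $[0,\infty)$: indeed $\theta_k - \xi\,{\rm e}^{-\theta_k v} \in [\theta_k-\xi,\theta_k]$ and $\theta_k>\xi$, so both numerator and denominator of the argument of the logarithm are bounded away from $0$ and from $\infty$ uniformly in $v\ge 0$, whence $\sup_{v\ge0}|g(v)|<\infty$. Since $h(v)\in(0,1)$ as well (it is the probability in \eqref{eq:P_idle_transient}), the identification of the limit is then immediate: taking conditional expectations given $V_{i-1}$ inside the ergodic average, $\E_k[Y_i\,|\,V_{i-1}] = h(V_{i-1})$ and $\E_k[(1-Y_i)\,|\,V_{i-1}] = 1-h(V_{i-1})$ under $\mathrm{H}_k$ (using \eqref{eq:P_idle_transient} with $\theta=\theta_k$), which yields \eqref{eq:l0}; expanding $Z_i^2$, using $Y_i^2=Y_i$, $(1-Y_i)^2 = 1-Y_i$, $Y_i(1-Y_i)=0$, and again conditioning on $V_{i-1}$ gives \eqref{eq:l02}.

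The one point requiring a little care — and the closest thing to an obstacle — is justifying the ergodic theorem itself, i.e.\ that the Poisson-sampled workload chain is ergodic (not merely stationary) under $\varphi_k\apost(0)>0$. The cleanest route is regeneration: under $\mathrm{H}_k$ with $\theta_k=\psi_k(\xi)$ one has $\P_k(V_n=0\,|\,V_{n-1}=v) = (\xi/\theta_k)\,{\rm e}^{-\theta_k v}>0$, so from any state there is positive probability of hitting $0$ in one step, and the quasi-busy-period analysis of Section~\ref{sec:QBP} (in particular $\P_k(R<\infty\,|\,V_0=0)=1$ when the queue is stable, which is part of why $r_k(\xi)$ is a genuine probability distribution) shows $\{n:V_n=0\}$ is a.s.\ infinite with finite mean return time; the i.i.d.\ excursions between successive zeros give a renewal-reward representation of $\frac1n\sum_{i=1}^n f(V_{i-1},Y_i)$ whose limit is $\E_k[f(V,Y)]$ by the classical SLLN for renewal-reward processes, provided the per-cycle reward is integrable — which is exactly what the integrability bounds above deliver (here one uses $\E_k R<\infty$ together with the uniform bound on $g$ and $\E_k V^2<\infty$ to control $\E_k\big[\sum_{i \text{ in a cycle}} |f(V_{i-1},Y_i)|\big]$). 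This regeneration argument both establishes ergodicity and directly pins down the limiting constant as the stationary expectation, and it is the approach I would write up. Everything else is routine algebra.
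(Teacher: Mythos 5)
Your proof is correct and follows the same basic route as the paper: invoke an almost-sure law of large numbers for the stationary sequence $(Z_n)$, then identify the limit by taking a conditional expectation given $V_{i-1}$ and using \eqref{eq:P_idle_transient}. The paper, however, compresses the first step into a single invocation of ``PASTA,'' which is really a statement about stationary distributions rather than a strong law, and it does not discuss ergodicity or integrability at all. Your write-up fills both gaps: you justify ergodicity of the Poisson-sampled workload chain via positive Harris recurrence and the regeneration structure at the zero-workload epochs (tying it to the quasi-busy-period analysis of Section~\ref{sec:QBP}), and you verify integrability of $Z$ and $Z^2$ by observing that $g$ is uniformly bounded (since $\theta_j>\xi$ implies $\theta_j-\xi e^{-\theta_j v}\in[\theta_j-\xi,\theta_j]$) so that the only unbounded contribution is the $w_2 Y_i V_{i-1}$ term, controlled by $\E_k V$ (resp.\ $\E_k V^2$). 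You also correctly flag that the lemma as stated implicitly needs $\varphi_k^{(2)}(0)<\infty$ for part~(i) and $|\varphi_k^{(3)}(0)|<\infty$ for part~(ii) — conditions the paper only imposes explicitly later, in Theorem~\ref{thm:ln_FCLT}, but which are already needed here for $m_k$ and $s_k$ to be finite. The identification of the limiting constants matches the paper's computation, including the algebraic simplifications $Y_i^2=Y_i$, $(1-Y_i)^2=1-Y_i$, $Y_i(1-Y_i)=0$ used in part~(ii) that the paper leaves implicit (``Claim (ii) follows in the same manner''). So: same skeleton, but your version is self-contained and more careful about the hypotheses that actually drive the argument.
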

\begin{proof}
We start by proving claim (i). Applying PASTA, we have that $\frac{1}{n}\ell_n=\frac{1}{n}\sum_{i=1}^nZ_i \asarrow \E_k Z$. The stationary sample average can be computed  as follows. Recalling that $V(0)=_{\rm d} V$, the mean of the first increment $Z_1$ (and hence also the mean of all other increments) equals
\[
\begin{split}
\E_k[\E_k[Z_1|V_0]] &= w_1+w_2\,\E_k[Y_1\,V_0]+\E_k\left[(1-Y_1)\,g(V_0)\right] \\ 
&= w_1+w_2\,\E_k[\P_k[Y_1=1|V_0]\,V_0]+\E_k\left[\P_k[Y_1=0|V_0]\,g(V_0)\right] \\
&=w_1+w_2\,\E_{k}\left[h(V)\,V\right] +\E_{k}\left[ \left(1-h(V) \right)g(V)\right]\ ,
\end{split}
\]
where in the last equality \eqref{eq:P_idle_transient} has been used.
Claim (ii) follows in the same manner.
\end{proof}

We next turn our attention to the asymptotic distribution of $\sqrt{n}\left(\ell_n/n-m_k\right)=\left(\ell_n-nm_k\right)/\sqrt{n}$ as $n\to \infty$, for $k=0,1$.
Let $z_i:=\E_k Z_i$ and $M_n:=\sum_{i=1}^n(Z_i-z_i)$. Then 
 \begin{equation}\label{eq:ell_n_decomposition}
\ell_n-nm_k=\sum_{i=1}^n(Z_i-m_k+z_i-z_i)=M_n+\sum_{i=1}^n(z_i-m_k)\ .
\end{equation}
As $V_0$ (and $Z_1$) is stationary, we have $z_i=m_k$ for all $i$, so that we can focus on deriving the limiting distribution of $M_n/\sqrt{n}$ as $n\to\infty$. Let BM$(d, \sigma^2)$ be a Brownian motion with drift $d$ and variance coefficient $\sigma^2$, i.e., at time $t$ having a normal distribution with mean $dt$ and variance $\sigma^2 t$. The main result of this subsection is the following FCLT. 

\begin{theorem}\label{thm:ln_FCLT}
If $\varphi_k>0$ and $|\varphi_k^{(3)}(0)|<\infty$, and the initial workload is stationary $($i.e., $V_0=_{\rm d} V$$)$, then as $n\to\infty$, for $k=0,1$,
\begin{equation}\label{eq:ln_FCLT} \left(
\frac{1}{\sqrt{n}}\left(\ell_{\floor{nt}}-nt\,m_k\right)\right)_{t\geqslant 0}\darrow \mathrm{BM}\left(0,\sigma_k^2\right)\ ,
\end{equation}
{with respect to the Skorohod topology on the functional space $D[0,\infty)$ $($see, e.g., \cite[Ch.\ 3]{book_Billingsley1999}$)$,} where 
$\sigma_k^2:=\lim_{n\to\infty}n^{-1}\,{\Var_k\,M_n} <\infty.$\end{theorem}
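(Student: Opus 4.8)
The plan is to establish the FCLT for $M_n/\sqrt{n}$ by verifying the hypotheses of a martingale-type functional central limit theorem for stationary, weakly dependent sequences, exploiting the Markovian structure of the sampled workload chain $(V_n)_{n\geqslant 0}$. First I would observe that $(V_n)_{n\geqslant 0}$ is a stationary Markov chain (since $V_0 =_{\rm d} V$ and the transition kernel is given by \eqref{eq:V_LST_transient}), and that $Z_i = w_1 + w_2\,Y_i V_{i-1} + (1-Y_i)\,g(V_{i-1})$ is a bounded-in-$L^2$ functional of $(V_{i-1}, Y_i)$; conditionally on $V_{i-1}$ the randomness in $Y_i$ is an independent coin flip, so $(V_{i-1}, Y_i)$ is itself a stationary Markov chain driving the increments. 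Thus $M_n = \sum_{i=1}^n (Z_i - m_k)$ is an additive functional of a stationary Markov chain with mean-zero, square-integrable summands (square-integrability of $Z$ follows from Lemma~\ref{lemma:LLN_l0}(ii), i.e.\ $s_k<\infty$, which in turn uses $\E_k V^2<\infty$, guaranteed by the finite third moment assumption via \eqref{eq:V_moments}).

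Second, I would bring in the speed-of-convergence results of Section~\ref{sec:converge} to control the dependence. The key point is that the summability statements in Theorem~\ref{thm:LST_convergence_anyV} and Lemma~\ref{lemma:LST_convergence} (for $V(0)=0$) give us $\sum_{n\geqslant 1} |\E[\phi(V_n)\mid V_0]-\E\phi(V)| < \infty$ for the relevant functions $\phi$ (namely $v$, $\mathrm e^{-\alpha v}$, $v\,\mathrm e^{-\alpha v}$, and hence for $h(v)\,v$, $g(v)$, $h(v)g(v)$ etc., which are Lipschitz or bounded combinations of these); this is precisely an absolute-summability-of-covariances condition. Concretely, $\mathrm{Cov}_k(Z_1, Z_{1+n})$ can be written, by conditioning on $V_0$ and then on $V_n$, as an integral against the deviation $\E_k[\psi(V_n)\mid V_0]-\E_k\psi(V)$ for suitable $\psi$, and the Section~\ref{sec:converge} bounds make $\sum_{n\geqslant 1}|\mathrm{Cov}_k(Z_1,Z_{1+n})|<\infty$. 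This yields that $\sigma_k^2 := \lim_{n\to\infty} n^{-1}\Var_k M_n = \Var_k Z_1 + 2\sum_{n\geqslant 1}\mathrm{Cov}_k(Z_1, Z_{1+n})$ exists and is finite, establishing the asserted finiteness of $\sigma_k^2$.

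Third, with summable covariances and stationarity in hand, I would invoke a standard FCLT for stationary weakly dependent sequences. The cleanest route is the Maxwell--Woodroofe / Kipnis--Varadhan martingale-approximation theorem: for a stationary ergodic Markov chain, if $\sum_{n\geqslant 1} n^{-3/2}\|\E[S_n \mid \mathcal F_0]\|_2 < \infty$ (where $S_n = \sum_{i=1}^n(Z_i-m_k)$ and $\mathcal F_0 = \sigma(V_0)$), then $S_{\lfloor nt\rfloor}/\sqrt n \Rightarrow \mathrm{BM}(0,\sigma_k^2)$ in the Skorohod topology on $D[0,\infty)$. The required bound is implied by our covariance summability: $\|\E[S_n\mid\mathcal F_0]\|_2 \leqslant \sum_{i=1}^n \|\E[Z_i - m_k\mid V_0]\|_2$, and each term is controlled by the Section~\ref{sec:converge} deviation bounds, giving $\|\E[S_n\mid\mathcal F_0]\|_2 = O(1)$, which is far more than enough for the $n^{-3/2}$-summability. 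Ergodicity of $(V_n)$ follows from the fact that $V(0)=0$ is reached with positive probability from any starting level (using \eqref{eq:P_idle_transient}) together with the Harris-recurrence structure of the reflected L\'evy process. Finally, since $z_i = m_k$ for all $i$ by stationarity, \eqref{eq:ell_n_decomposition} reduces to $\ell_n - n m_k = M_n$, so the FCLT for $M_n/\sqrt n$ is exactly \eqref{eq:ln_FCLT}.

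The main obstacle I anticipate is not the abstract FCLT machinery but the bookkeeping needed to reduce $\mathrm{Cov}_k(Z_1, Z_{1+n})$ and $\E_k[Z_i - m_k \mid V_0]$ to the specific deviation functionals controlled in Section~\ref{sec:converge}: one must expand $Z_i$ into its three pieces, handle the $Y_i$-factor by first conditioning on $V_{i-1}$ (turning $Y_i$ into $h(V_{i-1})$ times a Bernoulli), and then verify that each resulting function of $V_{i-1}$ — e.g.\ $v\mapsto h(v)v$, $v\mapsto g(v)$, and their products — is a Lipschitz function of $v$ (or a bounded combination of $\mathrm e^{-\theta v}$ and $v\,\mathrm e^{-\theta v}$) so that Lemma~\ref{lemma:LST_convergence} and the Lipschitz argument in its proof apply. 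A secondary technical point is justifying ergodicity of the stationary chain carefully enough to license the ergodic theorem used implicitly in identifying the limit; this is where one leans on the positivity of the idle probability in \eqref{eq:P_idle_transient}.
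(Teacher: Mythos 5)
Your strategy is sound and rests on exactly the same key input as the paper's proof, namely the summability of the conditional deviations $\E_k[Z_i\mid V_0]-m_k$ supplied by the Section~\ref{sec:converge} speed-of-convergence results (Lemma~\ref{lemma:FCLT_condition} in the paper) and the absolute summability of the covariances (Proposition~\ref{prop:ln_sigma}) to identify $\sigma_k^2$. The difference is in packaging: the paper carries out the Gordin-type coboundary decomposition by hand, writing $M_n=\tilde M_n+R_n$ with $\tilde M_n=\sum_{j=0}^{n-1}D_j$ a sum of stationary martingale differences $D_j=Z_{j+1}-m_k+z(V_{j+1})-z(V_j)$ (where $z$ solves the Poisson equation via \eqref{eq:dZ_V0}) and $R_n$ an a.s.\ finite remainder, and then applies the stationary ergodic martingale FCLT of \cite[Thm.\ 18.3]{book_Billingsley1999}; you instead invoke the Maxwell--Woodroofe criterion as a black box. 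Both are legitimate martingale-approximation routes, and yours is arguably shorter once the criterion is granted.

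There is, however, one step where your route demands more than the paper's lemmas deliver. To verify $\sum_n n^{-3/2}\|\E[S_n\mid\mathcal F_0]\|_2<\infty$ you assert $\|\E[S_n\mid\mathcal F_0]\|_2=O(1)$ via the bound $\sum_{i=1}^n\|\E[Z_i-m_k\mid V_0]\|_2$; for that sum to stay bounded you need \emph{term-by-term} summability of the $L^2$ norms, i.e.\ $\sum_{i\geqslant 1}\|\E[Z_i-m_k\mid V_0]\|_2<\infty$. Lemma~\ref{lemma:FCLT_condition} only gives almost-sure convergence of the signed series to $z(V_0)$ together with $\E_k[z(V)^2]<\infty$; the absolute-summability results of Lemma~\ref{lemma:LST_convergence} are proved only for $V(0)=0$ (where a monotone coupling fixes the sign of each term), not for a stationary random $V_0$. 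You would therefore need either to extend the monotone-coupling argument to general initial conditions (sandwiching $V_i$ between the processes started from $0$ and from stationarity) so as to control each $\|\E[Z_i-m_k\mid V_0]\|_2$, or to bound $\sup_n|\E[S_n\mid V_0]|$ by an $L^2$ dominating variable built from $k_2^\star(V_0)$ and $k_3(V_0,\cdot)$. This is fillable but not automatic, and it is precisely the step the paper's explicit decomposition avoids, since there only the a.s.\ finiteness of the full series (for $R_n/\sqrt n\to 0$) and the square-integrability of the limit $z(V)$ (for $\E_k[D_j^2]<\infty$) are needed. Your remaining points --- reduction of the covariances to the deviation functionals, square-integrability of $Z$, and the need to justify ergodicity --- are correctly identified and match the paper's treatment.
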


The proof of this theorem relies on the methodology presented in \cite[Ch.\ 18--19]{book_Billingsley1999}. In particular, we decompose $M_n$ into a martingale difference process and an additional term that vanishes when scaled by $1/\sqrt{n}$, and then apply the FCLT for stationary ergodic martingale difference processes (see \cite[Thm.\ 18.3]{book_Billingsley1999}). The decomposition is similar to the one used in the proof of \cite[Thm.\ 19.1]{book_Billingsley1999}, but we utilize the specific structure of our problem and do not impose the stronger assumptions that are required there. Informally, this essentially boils down to showing that the dependence between the increments in the LLR diminishes sufficiently fast. To this end, we apply the results of Section \ref{sec:converge} that describe the convergence rate of the LLR to stationarity. This will then be used to verify the FCLT conditions of the martingale approximation, and in particular to show that $\sigma_k^2<\infty$.

We proceed by sketching the structure of the remainder of this subsection, geared towards proving Theorem \ref{thm:ln_FCLT}. 
\begin{itemize}
\item[$\circ$]
Lemma \ref{lemma:FCLT_condition}, which builds on Theorem \ref{thm:LST_convergence_anyV} and Lemma \ref{lemma: k2_bound}, establishes that the LLR converges fast enough to satisfy a sufficient condition for the FCLT of Eqn.\ \eqref{eq:ln_FCLT}. 
\item[$\circ$]Then, in Proposition~\ref{prop:ln_sigma}, we rely on Lemma \ref{lemma:LST_convergence} and Corollary \ref{C2} to show that $\sigma_k^2:=\lim_{n\to\infty}n^{-1}\,{\Var_kM_n}$ is finite, and moreover the absolute convergence of the series of covariance terms $(\Cov_k(Z_1,Z_{i+1}))_{i=0,1,\ldots}$. 
\item[$\circ$] Then we are in a position to prove Theorem \ref{thm:ln_FCLT}. We use the martingale decomposition and apply Lemma \ref{lemma:FCLT_condition} to verify that $M_n$ can be written as a sum of a martingale difference process $\tilde{M}_n$ with stationary increments that have a finite second moment (thus satisfying the conditions of \cite[Thm.\ 18.3]{book_Billingsley1999}) and an error term $R_n$ that is almost surely finite. Then Proposition \ref{prop:ln_sigma} is used to show that the asymptotic variance of $\tilde{M}_n/\sqrt{n}$ equals $\sigma_k^2$. 
\end{itemize}The proofs of Lemma \ref{lemma:FCLT_condition} and Proposition \ref{prop:ln_sigma} are quite lengthy and are therefore relegated to the appendix.

\begin{lemma}\label{lemma:FCLT_condition} Let $k\in\{0,1\}$.
If $\varphi_k\apost(0)>0$, and the initial workload is stationary $($i.e., $V_0=_{\rm d} V$$)$, then the following claims hold: {\em (a)}~With respect to $\P_k$ we have that almost surely,
\begin{equation}\label{eq:dZ_V0}
\sum_{n=1}^\infty (\E_k[Z_n\,|\,V_0]-m_k)=z(V_0)\ ,
\end{equation}
where
\begin{equation}\label{eq:zV0}
\begin{split}
z(v) &:= \frac{\xi^2}{\theta_k} k_3(v,\theta_k) +\xi\sum_{j=1}^\infty\frac{1}{j}\left(\left(\frac{\xi}{\theta_0}\right)^j k_2(v, \theta_0 j) -\left(\frac{\xi}{\theta_1}\right)^j k_2(v, \theta_1 j)\right) \\
&\quad - \frac{\xi^2}{\theta_k} \sum_{j=1}^\infty\frac{1}{j}\left(\left(\frac{\xi}{\theta_0}\right)^j k_2(v, \theta_0 j+\theta_k) -\left(\frac{\xi}{\theta_1}\right)^j k_2(v, \theta_1 j+\theta_k)\right)\ .
\end{split}
\end{equation}
{\rm (b)} $|z(V_0)|<\infty$ almost surely. {\rm (c)} If in addition $|\varphi_k^{(3)}(0)|<\infty$, then $\E_k[z(V)^2]<\infty$.
\end{lemma}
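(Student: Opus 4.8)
The plan is to obtain (a) by a term-by-term evaluation and then (b)--(c) by crude size estimates on the resulting series. Conditioning on $V_{n-1}$ and using \eqref{eq:P_idle_transient}, write $h_k(v):=(\xi/\theta_k){\rm e}^{-\theta_k v}$ for the $\mathrm H_k$-probability of an empty buffer given previous workload $v$; then $\E_k[Z_n\,|\,V_{n-1}]=\phi_k(V_{n-1})$ with $\phi_k(v):=w_1+w_2\,h_k(v)\,v+(1-h_k(v))\,g(v)$ and $m_k=\E_k[\phi_k(V)]$, so the series in \eqref{eq:dZ_V0} equals $\sum_{n\geqslant1}(\E_k[\phi_k(V_{n-1})\,|\,V_0]-m_k)$. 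Since $\theta_j=\psi_j(\xi)>\xi$ we have $h_j(v)\in[0,\xi/\theta_j)\subset[0,1)$ for all $v\geqslant0$, so we may expand both logarithms in $g$ via $\log(1-x)=-\sum_{m\geqslant1}x^m/m$, giving
\[
g(v)=\log(\theta_1/\theta_0)+\sum_{m\geqslant1}\frac1m\big((\xi/\theta_0)^m{\rm e}^{-m\theta_0 v}-(\xi/\theta_1)^m{\rm e}^{-m\theta_1 v}\big).
\]
Hence $\phi_k(v)$ is a constant plus a geometrically-weighted sum of terms of the form ${\rm e}^{-\alpha v}$ (from $g$, and from $h_k\cdot g$ with exponents shifted by $\theta_k$) and the single term $v\,{\rm e}^{-\theta_k v}$ (from $h_k(v)\,v$). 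For each such term Theorem \ref{thm:LST_convergence_anyV} supplies the accumulated deviation from stationarity: part (ii) gives $\sum_{n\geqslant1}(\E_k[{\rm e}^{-\alpha V_n}|V_0]-\E_k[{\rm e}^{-\alpha V}])=\xi\,k_2(V_0,\alpha)$ and part (iii) gives $\sum_{n\geqslant1}(\E_k[V_n{\rm e}^{-\alpha V_n}|V_0]-\E_k[V{\rm e}^{-\alpha V}])=\xi\,k_3(V_0,\alpha)$. Substituting $\alpha\in\{m\theta_0,m\theta_1\}$, $\alpha\in\{m\theta_0+\theta_k,m\theta_1+\theta_k\}$ and $\alpha=\theta_k$ respectively, and then collecting and simplifying the resulting series, yields \eqref{eq:zV0}.

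The one delicate step is interchanging the sum over the expansion index $m$ with the sum over the sampling index $n$; this I would justify (and, as a bonus, thereby obtain the absolute convergence needed in Proposition \ref{prop:ln_sigma}) by a mild extension of the coupling used in the proof of Lemma \ref{lemma:LST_convergence}. On one probability space realise $V^{(v)}(t)=X(t)+\max\{v,L(t)\}$ and $V^{(\star)}(t)=X(t)+\max\{V,L(t)\}$, with $L(t)=-\inf_{s\leqslant t}X(s)$ and $V$ a stationary workload independent of $X$; these coincide once $L(t)$ exceeds $a:=\max\{v,V\}$, i.e.\ after the stopping time $\tau:=\inf\{t\geqslant0:-X(t)\geqslant a\}$. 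As $-X$ is spectrally negative with positive mean $\varphi_k\apost(0)$ it creeps upward, so $\E_k[\tau\,|\,V]=a/\varphi_k\apost(0)$ and $\E_k[\tau]\leqslant(v+\E_kV)/\varphi_k\apost(0)<\infty$ (finiteness of $\E_kV$ coming from $\varphi_k^{(2)}(0)<\infty$ via \eqref{eq:V_moments}). Because $|{\rm e}^{-\alpha V^{(v)}(S_n)}-{\rm e}^{-\alpha V^{(\star)}(S_n)}|\leqslant\mathbf 1(S_n<\tau)$ for every $\alpha>0$ (the terms coincide after $\tau$, and both lie in $[0,1]$ before) and the Poisson clock $(S_n)$ is independent of $X$ and $V$,
\[
\sum_{n\geqslant1}\big|\E_k[{\rm e}^{-\alpha V_n}\,|\,V_0=v]-\E_k[{\rm e}^{-\alpha V}]\big|\leqslant\sum_{n\geqslant1}\P_k(S_n<\tau)=\xi\,\E_k[\tau]<\infty
\]
\emph{uniformly in} $\alpha$. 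Together with $\sum_{m\geqslant1}m^{-1}(\xi/\theta_j)^m=-\log(1-\xi/\theta_j)<\infty$ this makes the double series absolutely convergent, so Fubini applies and the rearrangement above is legitimate. (The lone $v\,{\rm e}^{-\theta_k v}$-term is handled identically, $x\mapsto x{\rm e}^{-\theta_k x}$ being bounded.)

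For (b), from \eqref{eq:zV0}, using $|k_2(v,\alpha)|\leqslant k_2^\star(v)<\infty$ (Lemma \ref{lemma: k2_bound}), finiteness of $k_3(v,\theta_k)$ for each fixed $v$, and $\sum_{m\geqslant1}m^{-1}(\xi/\theta_j)^m<\infty$, one gets $|z(v)|\leqslant a\,|k_3(v,\theta_k)|+b\,k_2^\star(v)$ with $a,b$ depending only on $\xi,\theta_0,\theta_1$; since $V_0=_{\rm d}V<\infty$ a.s.\ under stability, $|z(V_0)|<\infty$ a.s. (Equivalently, the absolute convergence obtained above already gives $|z(v)|\leqslant\|\phi_k\|_\infty+|m_k|+2\|\phi_k\|_\infty\,\xi\,\E_k[\tau]<\infty$, $\phi_k$ being bounded.) For (c), I would upgrade to an affine bound: $\varphi(\alpha)\geqslant\alpha\varphi_k\apost(0)$ together with $0\leqslant{\rm e}^{-\alpha v}-1+\alpha v\leqslant\alpha v$ gives $|k_2(v,\alpha)|\leqslant|k_2(0,\alpha)|+\alpha v/\varphi(\alpha)\leqslant k_2^\star(0)+v/\varphi_k\apost(0)$, hence $k_2^\star(v)\leqslant k_2^\star(0)+v/\varphi_k\apost(0)$; inspection of the closed form of $k_3(\cdot,\theta_k)$ shows it too is bounded by an affine function of $v$. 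Therefore $|z(v)|\leqslant C_1+C_2 v$, and $\E_k[z(V)^2]\leqslant\E_k[(C_1+C_2V)^2]<\infty$, because under the finite-third-moment hypothesis $\E_kV^2<\infty$ by \eqref{eq:V_moments}.

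I expect the main effort to lie in the bookkeeping of the substitution into Theorem \ref{thm:LST_convergence_anyV} and in the Fubini justification of the interchange of summations; the uniform-in-$\alpha$ coupling bound above is precisely the device that reduces the latter to a routine estimate.
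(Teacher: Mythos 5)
Your proof is essentially correct and follows the same overall strategy as the paper: condition on $V_{n-1}$ to write $\E_k[Z_n\,|\,V_0]$ as a sum of exponential moments of $V_{n-1}$ (via the logarithmic series expansion of $g$), apply Theorem~\ref{thm:LST_convergence_anyV} term by term, and then control $z(v)$ with the aid of $k_2^\star(v)$ from Lemma~\ref{lemma: k2_bound}. There are, however, two places where your argument is genuinely tighter than the paper's, and both are worth noting.

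First, your coupling bound
\[
\sum_{n\geqslant 1}\big|\E_k[\mathrm e^{-\alpha V_n}\,|\,V_0=v]-\E_k[\mathrm e^{-\alpha V}]\big|\leqslant \xi\,\E_k[\tau]\leqslant \frac{\xi(v+\E_k V)}{\varphi_k'(0)},\qquad\text{uniformly in }\alpha>0,
\]
is precisely what is needed to legitimize the interchange of the Taylor index $j$ with the sampling index $n$. The paper's proof simply performs this interchange without comment, and the tools at its disposal do not obviously suffice for a general (random, stationary) $V_0$: Lemma~\ref{lemma: k2_bound} bounds the \emph{signed} limit $k_2(v,\alpha)$, while Corollary~\ref{C2} gives an absolute bound but only when $V(0)=0$ (where a pathwise ordering makes the deviation series one-signed). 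Your coupling of $V^{(v)}$ and $V^{(\star)}$ via the common regulator, together with $\E_k\tau_a=a/\varphi_k'(0)$ for the spectrally negative process $-X$ that creeps upward, fills this gap cleanly and, as you observe, also yields the absolute summability used later in Proposition~\ref{prop:ln_sigma}. (Minor caveat: you implicitly use $\E_k V<\infty$, hence $\varphi_k^{(2)}(0)<\infty$; but Theorem~\ref{thm:LST_convergence_anyV} already needs this, so it is not an extra restriction.)

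Second, for part (c) your affine bound $k_2^\star(v)\leqslant k_2^\star(0)+v/\varphi_k'(0)$, derived from $0\leqslant \mathrm e^{-\alpha v}+\alpha v-1\leqslant \alpha v$ and $\varphi(\alpha)\geqslant\alpha\varphi_k'(0)$, together with the evident affine-in-$v$ structure of $k_3(v,\theta_k)$, gives $|z(v)|\leqslant C_1+C_2 v$ directly. The paper instead bounds $z(V)^2$ by a quadratic form in $k_2^\star(V)$ and $|k_3(V,\theta_k)|$ and appeals rather loosely to the fact that these reduce to combinations of $V,V^2,\mathrm e^{-\alpha V},V\mathrm e^{-\alpha V},V^2\mathrm e^{-\alpha V}$. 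Your route is shorter and makes the dependence on the finite third moment (needed for $\E_k V^2<\infty$ via \eqref{eq:V_moments}) fully transparent. In short: same decomposition, same use of Theorem~\ref{thm:LST_convergence_anyV} and Lemma~\ref{lemma: k2_bound}, but you supply a Fubini justification the paper omits and a cleaner second-moment estimate.
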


The following proposition establishes that the variance of the LLR is finite, and in addition that the series of covariance terms $(\Cov_k(Z_1,Z_{i+1}))_{i=0,1,\ldots}$ converges absolutely. The second part of this statement is important not just for establishing the FCLT approximation but also from a computational perspective. It implies that the covariance series, and subsequently $\sigma_k^2$, can be evaluated efficiently by means of truncation as the remainder of the series vanishes 
 in absolute terms.

\begin{proposition}\label{prop:ln_sigma}
Let $k\in\{0,1\}$. If $\varphi_k\apost(0)>0$ and $|\varphi_k^{(3)}(0)|<\infty$, and the initial workload is stationary $($i.e., $V_0=_{\rm d} V$$)$, then as $n\to\infty$, 
\begin{equation}\label{eq:var_Mn}
\sigma_k^2=\lim_{n\to\infty}n^{-1}{\Var_kM_n} =s_k-m_k^2+2\sum_{i=1}^\infty c_{ki}\ ,
\end{equation}
where $c_{ki}:=\Cov_k(Z_1,Z_{i+1})$ is given by
\begin{equation}\label{eq:cov_ki}
\begin{split}
c_{ki} &= w_2^2\left(\E_k\left[V_0 Y_1 V_{i}Y_{i+1}\right]-\E_k^2\left[V_{0}Y_1\right] \right) \\
&\ +w_2\left(\E_k\left[V_0 Y_1 g(V_{i})(1-Y_{i+1})\right]-\E_k\left[V_0 Y_1\right]\E_k\left[(1-Y_{1})g(V_{0})\right] \right) \\
&\ + w_2\left(\E_k\left[g(V_0)(1-Y_1)V_i Y_{i+1}\right] -{ \E_k\left[V_0 Y_1\right]\E_k\left[(1-Y_{1})g(V_{0})\right]}\right)\\
&\ +\E_k\left[ g(V_{0})(1-Y_{1})g(V_{i})(1-Y_{i+1})\right]-\E_k^2\left[g(V_0) (1-Y_{1})\right]\ .
\end{split}
\end{equation}
Moreover, $\sum_{i=1}^\infty |c_{ki}|<\infty$ and therefore $\sigma_k^2<\infty$.
\end{proposition}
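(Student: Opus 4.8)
The plan is to establish \eqref{eq:var_Mn} and \eqref{eq:cov_ki} by a direct covariance-stationarity argument, and then to obtain the absolute summability of the $c_{ki}$ from the convergence-to-stationarity bounds derived in Section \ref{sec:converge}. First I would note that, because $V_0=_{\rm d}V$ and the workload sampled at Poisson epochs is a stationary, ergodic Markov chain, the sequence $(Z_n)_{n\in{\mathbb N}}$ is strictly stationary. Hence $\Var_k M_n = \Var_k\big(\sum_{i=1}^n Z_i\big) = n\,\Var_k Z_1 + 2\sum_{i=1}^{n-1}(n-i)\,\Cov_k(Z_1,Z_{i+1})$, and the standard Cesàro/dominated-convergence step gives $n^{-1}\Var_k M_n \to \Var_k Z_1 + 2\sum_{i=1}^\infty c_{ki}$, once absolute summability of $(c_{ki})$ is known; here $\Var_k Z_1 = \E_k Z^2 - (\E_k Z)^2 = s_k-m_k^2$ by Lemma \ref{lemma:LLN_l0}. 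The explicit formula \eqref{eq:cov_ki} is then just the expansion of $\Cov_k(Z_1,Z_{i+1})$ using the representation $Z_i = w_1 + w_2\,Y_iV_{i-1} + (1-Y_i)g(V_{i-1})$: the constant $w_1$ contributes nothing to the covariance, $w_1^2$ cancels, and the remaining cross terms produce exactly the four displayed lines (using stationarity to write $\E_k[V_iY_{i+1}]=\E_k[V_0Y_1]$, etc.).

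The substantive part is the bound $\sum_{i=1}^\infty|c_{ki}|<\infty$. The idea is to write each covariance term as a conditional expectation given $V_0$ of a deviation from its stationary value, and then apply the speed-of-convergence results. Concretely, for a product term such as $\E_k[V_0Y_1\,V_iY_{i+1}]$ I would condition on $V_0$ and $V_1$: using \eqref{eq:P_idle_transient}, $\E_k[V_iY_{i+1}\,|\,V_1] = \frac{\xi}{\theta_k}\E_k[V_i e^{-\theta_k V_i}\,|\,V_1]$, which by part (iii) of Theorem \ref{thm:LST_convergence_anyV} differs from its stationary counterpart $\frac{\xi}{\theta_k}\E_k[Ve^{-\theta_kV}]$ by a term whose absolute value, summed over $i$, is controlled by $\xi\,|k_3(V_1,\theta_k)|$ — and Lemma~\ref{lemma:LST_convergence}, Eqn.~\eqref{eq:EVeV_convegence}, gives a uniform (in the starting point, via the sample-path domination $V(t)\le V^\star(t)$ when started empty, extended by conditioning) $L^1$ bound. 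Similarly the $g(V_i)(1-Y_{i+1})$ contributions reduce, after expanding $g$ via the series $\log(\theta_m - \xi e^{-\theta_m v}) = \log\theta_m - \sum_{j\ge1}\frac1j(\xi/\theta_m)^j e^{-\theta_m j v}$ (valid since $\theta_m>\xi$), to geometrically-weighted sums of the quantities $\E_k[e^{-\alpha V_i}\,|\,V_1]-\E_k[e^{-\alpha V}]$ with $\alpha\in\{\theta_mj,\theta_mj+\theta_k\}$, whose summability in $i$ is exactly Lemma \ref{lemma:LST_convergence} / Corollary \ref{C2}, and the geometric weights $(\xi/\theta_m)^j$ make the sum over $j$ converge as well. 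This is in fact the same bookkeeping that produces the function $z(v)$ in Lemma \ref{lemma:FCLT_condition}, so I would lean on that lemma: it already asserts $\sum_{n\ge1}(\E_k[Z_n|V_0]-m_k)=z(V_0)$ with $\E_k[z(V)^2]<\infty$, and the covariance $c_{ki}=\E_k[(Z_1-m_k)\E_k[Z_{i+1}-m_k\,|\,V_1]]$ (using the Markov property to replace conditioning on the whole past by conditioning on $V_1$, then a further conditioning on $V_1$) can be summed over $i$ to give $\E_k[(Z_1-m_k)\,z(V_1)]$, finite by Cauchy–Schwarz since $Z_1-m_k\in L^2$ (which needs $\E_k Z^2=s_k<\infty$, itself following from $\varphi_k^{(2)}(0)<\infty\Rightarrow\E_k V^2<\infty$ via \eqref{eq:V_moments}) and $z(V_1)\in L^2$ by Lemma \ref{lemma:FCLT_condition}(c). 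Justifying the interchange of $\sum_i$ and $\E_k[\cdot]$ requires the absolute-convergence statements, which is why the $L^1$ domination above (not merely $L^2$) is used at the inner step.

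The main obstacle I anticipate is making the interchange of summation, conditional expectation and integration fully rigorous while only invoking the \emph{weak} convergence-rate results of Section \ref{sec:converge} — i.e. finiteness of $\sum_n|\E_k[f(V_n)|V_0]-\E_k f(V)|$ rather than a geometric rate. The delicate point is that the bounding function ($\xi|k_3(V_0,\theta_k)|$ and the series of $\xi|k_2(V_0,\alpha)|$ terms) depends on the random starting level $V_0$ (here $V_1$), so one cannot pull a deterministic bound out; instead one must check that these bounding functions are themselves in $L^1$ (for the interchange with $\E_k$) and, for the final Cauchy–Schwarz, in $L^2$ — which is precisely what Lemma \ref{lemma: k2_bound} ($\E[k_2^\star(V)]<\infty$) and Lemma \ref{lemma:FCLT_condition}(c) are designed to supply. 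Once these integrability facts are in hand, Fubini–Tonelli legitimizes every interchange and the bound $\sum_i|c_{ki}|\le \E_k[|Z_1-m_k|\cdot \zeta(V_1)]<\infty$ follows, where $\zeta(v)$ collects the absolute bounds; this then yields $\sigma_k^2 = s_k-m_k^2+2\sum_{i\ge1}c_{ki}<\infty$ and completes the proof.
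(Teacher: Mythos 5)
Your expansion of $\Var_k M_n$ via the standard stationary identity and the derivation of \eqref{eq:cov_ki} from $Z_i = w_1+w_2Y_iV_{i-1}+(1-Y_i)g(V_{i-1})$ match the paper. The gap is in the argument for $\sum_{i\geqslant 1}|c_{ki}|<\infty$, which is the substantive part of the claim. Your main route---writing $\sum_i c_{ki}=\E_k[(Z_1-m_k)\,z(V_1)]$ via Lemma \ref{lemma:FCLT_condition}(a) and then invoking Cauchy--Schwarz together with Lemma \ref{lemma:FCLT_condition}(c)---only controls $\bigl|\sum_i c_{ki}\bigr|$, not $\sum_i|c_{ki}|$. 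To upgrade to absolute summability via Tonelli one needs $\sum_i \bigl|\E_k[Z_{i+1}-m_k\,|\,V_1]\bigr|$ to be almost surely finite and suitably integrable, but the speed-of-convergence results of Section~\ref{sec:converge} give absolute summability of $\sum_n|\E_k[\mathrm{e}^{-\alpha V_n}|V(0)]-\E_k \mathrm{e}^{-\alpha V}|$ (and the $V\mathrm{e}^{-\alpha V}$ analogue) \emph{only when $V(0)=0$} (Lemma \ref{lemma:LST_convergence}, Corollary \ref{C2}), because the sample-path domination $V(t)\leqslant V^\star(t)$ requires the chain to start empty. For a general starting level $V_1=v>0$, Theorem~\ref{thm:LST_convergence_anyV} and Lemma~\ref{lemma: k2_bound} control the \emph{value} of the signed sum ($\xi k_2(v,\alpha)$, bounded by $\xi k_2^\star(v)$), not the sum of absolute deviations; your appeal to Lemma~\ref{lemma:LST_convergence}/Corollary~\ref{C2} for ``summability in $i$'' of $\E_k[\mathrm{e}^{-\alpha V_i}\,|\,V_1]-\E_k[\mathrm{e}^{-\alpha V}]$ is therefore justified only on the event $\{V_1=0\}$.

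The paper's proof plugs exactly this hole by exploiting the indicator structure of \eqref{eq:cov_ki}. Each line carries a factor of either $Y_1$ or $1-Y_1$, so conditioning on $V_1$ is effectively conditioning on $\{V_1=0\}$ or $\{V_1>0\}$. On $\{V_1=0\}$ the deviation sequences $a_i$ (first line) and $b_i$ (second line, after the Taylor expansion of $g$) are handled directly by Lemma~\ref{lemma:LST_convergence} and Corollary~\ref{C2}. On $\{V_1>0\}$ (third and fourth lines) the paper does \emph{not} try to bound the deviation starting from a positive level; instead it uses
\[
\E\left[V_{i}\mathbf{1}(V_{i+1}=0)\,|\,V_1>0\right]
=\frac{\E\left[V_{i}\mathbf{1}(V_{i+1}=0)\right]-\E\left[V_{i}\mathbf{1}(V_{i+1}=0)\,|\,V_1=0\right]\P(V_1=0)}{\P(V_1>0)}
\]
to re-express the $\{V_1>0\}$-conditional deviation $e_i$ as $a_i\bigl(1-1/\varphi'(0)\bigr)$, a scalar multiple of the already-controlled $\{V_1=0\}$ deviation, and reduces $f_i$ analogously. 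This algebraic reduction to the empty-state conditioning is the step missing from your sketch; without it, absolute summability for the covariances involving $1-Y_1$ does not follow from the results you cite.
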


\begin{proof}[Proof of Theorem \ref{thm:ln_FCLT}]
In {Proposition} \ref{prop:ln_sigma} it is established that \[\lim_{n\to\infty} n^{-1}\Var_kM_n={\sigma^2_k}<\infty;\] this is, however, a necessary but not sufficient condition for the FCLT \eqref{eq:ln_FCLT} to hold. We will show that $M_n$ can be decomposed into two terms; one that is almost surely finite and the other has the same asymptotic distribution as a martingale difference sum that satisfies the conditions of the martingale FCLT \cite[Thm.\ 18.3]{book_Billingsley1999}.

The decomposition relies on iterating expectations, as follows. Define ${\mathscr F}_n:=\sigma\{V_0,\ldots,V_n\}$ and consider
\begin{align*}
Z_i-m_k&=
(Z_i-\E_k[Z_i\,|\,V_{i-1}])+(\E_k[Z_i\,|\,V_{i-1}]-m_k)\ .
\end{align*}
Then observe that the first term, i.e., $D_{i,1}:=Z_i-\E_k[Z_i\,|\,V_{i-1}]$, is a ${\mathscr F}_{i-1}$-martingale difference. We therefore have $\E_k[D_{i,1}|{\mathscr F}_{i-1}]=0$. Similarly, the second term can be decomposed into
\[
\E_k[Z_i\,|\,V_{i-1}]-m_k=(\E_k[Z_i\,|\,V_{i-1}]-\E_k[\E_k[Z_i\,|\,V_{i-1}]|\,V_{i-2}])+(\E_k[\E_k[Z_i\,|\,V_{i-1}]|\,V_{i-2}]-m_k)\ ,
\]
where the first term in the right-hand side of the previous display
\[
D_{i,2}:=\E_k[Z_i\,|\,V_{i-1}]-\E_k[\E_k[Z_i\,|\,V_{i-1}]|\,V_{i-2}]=\E_k[Z_i\,|\,V_{i-1}]-\E_k[Z_i\,|\,V_{i-2}]\ , 
\]
is now a ${\mathscr F}_{i-2}$-martingale difference. Continuing along these lines, we readily obtain, for $j\in\{1,\ldots,i\}$, that
\begin{equation}\label{eq:Dij}
D_{i,j}:=\E_k[Z_i\,|\,V_{i-j+1}]-\E_k[Z_i\,|\,V_{i-j}]\ ,
\end{equation}
is a ${\mathscr F}_{i-j}$-martingale difference. By performing $i$ iterations, we thus find that
\[
Z_i-m_k=\sum_{j=1}^iD_{i,j}+(\E_k[Z_i\,|\,V_0]-m_k)\ ,
\]
which implies that
\begin{equation}\label{prev}
M_n=\sum_{i=1}^n (Z_i-m_k)=\sum_{i=1}^n\sum_{j=1}^iD_{i,j}+\sum_{i=1}^n(\E_k[Z_i\,|\,V_0]-m_k)\ .
\end{equation}
Let us consider the first term in the right-hand side of (\ref{prev}). By changing the order of summation and some relabelling, we obtain
\[
\sum_{i=1}^n\sum_{j=1}^iD_{i,j}=\sum_{i=1}^n\sum_{j=0}^{i-1} D_{i,i-j}=\sum_{j=0}^{n-1}\sum_{i=j+1}^{n} D_{i,i-j}=\sum_{j=0}^{n-1}\sum_{i=j+1}^{\infty} D_{i,i-j}-\sum_{j=0}^{n-1}\sum_{i=n+1}^{\infty} D_{i,i-j} \ ,
\]
and observe that $D_{i,i-j}=\E_k[Z_i\,|\,V_{j+1}]-\E_k[Z_i\,|V_{j}]$ implies that $\sum_{i=j+1}^{\infty} D_{i,i-j}$ is a ${\mathscr F}_{j}$-martingale difference. Note that for now we have assumed that the infinite series are finite almost surely, and we will later verify that this is indeed the case. The stationarity of $V_0$ implies that for any $i> j$,
\[
D_{i,i-j}=\E_k[Z_i\,|\,V_{j+1}]-\E_k[Z_i\,|\,V_{j}]=_{\rm d} \E_k[Z_{i-j}\,|\,V_{1}]-\E_k[Z_{i-j}\,|\,V_{0}]=D_{i-j,i-j}\ .
\]
Therefore, for every $j\in{\mathbb N}$,
\[
D_j:=\sum_{i=j+1}^{\infty} D_{i,i-j}=_{\rm d} \sum_{i=j+1}^{\infty} D_{i-j,i-j}=\sum_{i=1}^{\infty} D_{i,i}\ ,
\]
with $(D_j)_{j\in{\mathbb N}}$ being a sequence of stationary random variables. Now Lemma \ref{lemma:FCLT_condition}(a) implies that, almost surely,
\[
\begin{split}
\sum_{i=1}^{\infty} D_{i,i} &=\sum_{i=1}^{\infty} (\E_k[Z_{i}\,|\,V_{1}]-\E_k[Z_{i}\,|\,V_{0}])=\sum_{i=1}^{\infty} (\E_k[Z_{i}\,|\,V_{1}]-m_k)-\sum_{i=1}^{\infty} (\E_k[Z_{i}\,|\,V_{0}]-m_k) \\
&= \E_k[Z_{1}\,|\,V_{1}]-m_k+z(V_1)-z(V_0)=Z_{1}-m_k+z(V_1)-z(V_0) \ .
\end{split}
\] 
Under the assumption of $|\varphi_k^{(3)}(0)|<\infty$, Lemma \ref{lemma:FCLT_condition}(c) further implies that $\E_k[z(V_0)^2]<\infty$. By applying similar arguments $\E_k[z(V_0)z(V_1)]<\infty$ as well (in  particular, note that one can condition on $V_0$ and use \eqref{eq:V_LST_transient} and \eqref{eq:P_idle_transient} to obtain a decomposition of $\E_k[z(V_0)z(V_1)]$ as linear and quadratic terms of $V_0$). Therefore, $\E_k[D_j^2]<\infty$.
Applying the telescopic structure in \eqref{eq:Dij} yields
\[
\begin{split}
\sum_{j=0}^{n-1}\sum_{i=n+1}^{\infty} D_{i,i-j} &= \sum_{j=0}^{n-1}\lim_{m\to\infty}\sum_{i=n+1}^{m} D_{i,i-j}=\lim_{m\to\infty}\sum_{j=0}^{n-1}\sum_{i=n+1}^{m} D_{i,i-j} = \lim_{m\to\infty}\sum_{i=n+1}^{m}\sum_{j=0}^{n-1} D_{i,i-j} \\
&= \lim_{m\to\infty}\sum_{i=n+1}^{m}\left(\E_k[Z_i|V_n]-\E_k[Z_i|V_0] \right)= \sum_{i=n+1}^{\infty}\left(\E_k[Z_i|V_n]-\E_k[Z_i|V_0] \right)\\
&= \sum_{i=n+1}^{\infty}\left(\E_k[Z_i|V_n]-m_k \right)-\sum_{i=n+1}^{\infty}\left(\E_k[Z_i|V_0]-m_k \right) \ .
\end{split}
\] 
Upon combining the above, we find from (\ref{prev}) that we can write
\[
M_n=\tilde{M}_n +R_n\ ,
\]
where
\[\tilde{M}_n:=\sum_{j=0}^{n-1} D_j,\:\:\:\:\:R_n:=\sum_{i=1}^\infty (\E_k[Z_i\,|\,V_0]-m_k)-\sum_{i=n+1}^{\infty}\left(\E_k[Z_i|V_n]-m_k \right)\ ,\] 
Observe that $\tilde{M}_n$ is a martingale difference sum. Also, $R_n$ is almost surely finite due to Lemma \ref{lemma:FCLT_condition}(b), so that $R_n/\sqrt{n}\to 0$ almost surely as $n\to\infty$. Combining the above, we conclude 
\begin{equation}\label{eq:lim_Mn}
\lim_{n\to\infty}\frac{M_n}{\sqrt{n}}=_{\rm d}\lim_{n\to\infty}\frac{\tilde{M}_n}{\sqrt{n}}=_{\rm d}
\lim_{n\to\infty}\frac{1}{\sqrt{n}}\sum_{j=0}^{n-1}D_j;
\end{equation}
notice that the right-hand side involves a series of the stationary martingale differences $(D_j)_{j\in{\mathbb N}}$. As $\E_k[D_j^2]<\infty$, the conditions of the martingale FCLT \cite[Thm.\ 18.3]{book_Billingsley1999} are satisfied and we conclude that \eqref{eq:ln_FCLT} holds. 

Furthermore, $\E_k[D_j^2]=\lim_{n\to\infty}{n}^{-1}\Var_k\,\tilde{M}_n$. Using the Cauchy-Schwarz inequality
\[
\frac{1}{n}\Cov_k(M_n,R_n)\leqslant \sqrt{\frac{1}{n}\Var_k\,M_n\,\cdot\frac{1}{n}\Var\,R_n}\ ,
\]
together with ${n}^{-1}\Var_k\,M_n\to \sigma_k^2$ (see Lemma \ref{prop:ln_sigma}) and ${n}^{-1}\Var_k\,R_n\to 0$, we conclude that as $n\to \infty$,
\[
\frac{1}{n}\Var_k\tilde{M}_n=\frac{1}{n}\Var_k (M_n-R_n)=\frac{1}{n}\Var_k M_n+\frac{1}{n}\Var_k R_n-\frac{2}{n}\Cov_k(M_n,R_n)\to \sigma_k^2 \ .
\] This concludes the proof.
\end{proof}

\subsection{Brownian approximation}\label{sec:normal_approx}
A standard way to approximate $\alpha(x)$ is by relying on a Brownian approximation, in our case facilitated
by Theorem \ref{thm:ln_FCLT}. Applying this theorem, with $(B(t))_{t\geqslant 0}$ a standard Brownian motion, realizing that $m_0<0$,
\begin{align*}
\alpha(x) &=\P_{0}\left(\sup_{t\geqslant 0}\ell_{\floor{nt}} >x\right)= \P_0\left(\exists t\geqslant 0: \frac{\ell_{\floor{nt}} -nt\,m_0 }{\sqrt{n}}\geqslant \frac{x-nt\,m_0 }{\sqrt{n}}\right)\\
&\approx \P_0\left(\exists t\geqslant 0: \sigma_0 B(t)\geqslant \frac{x-nt\,m_0 }{\sqrt{n}}\right)\\
&=\P_0\left(\exists t\geqslant 0: B(t)+\frac{\sqrt{n}m_0}{\sigma_0}t\geqslant \frac{x}{\sigma_0\sqrt{n}}\right)=
\exp\left(-2\frac{m_0 x}{\sigma^2_0}\right),
\end{align*}
where the last expression is a standard result for the maximum of a Brownian motion with negative drift. Similarly, we can approximate the expected rejection time if the null hypothesis is wrong by
\[
\E_1 N_x \approx \frac{x}{|m_1|}\ .
\]

\subsection{Change of measure and approximation}\label{sec:CLRT_approx}
Apart from a Brownian approximation, another standard technique to approximately evaluate $\alpha(x)$ is by using exponential-change-of-measure techniques. Again, the key difficulty is the intricate dependence between increments of the log-likelihood process.
As before, $\ell_n=\sum_{i=1}^n Z_i$, where the increments $Z_i$ are given by \eqref{eq:Zi}, and $N_x=\inf\{n\geqslant 1:\ \sum_{i=1}^n Z_i\geqslant x\}$. Let the cumulant generating function be given by
\[
\kappa_n(\beta):=\frac{1}{n}\log\E_0  {\rm e}^{\beta \ell_n}=\frac{1}{n}\log\E_0  {\rm e}^{\beta \sum_{i=1}^n Z_i}\ .
\]
Then $M_n(\beta):=\exp({\beta \ell_n-n\kappa_n(\beta)})$ is a martingale with respect to ${\mathscr F}_n:=\sigma\{V_0,\ldots,V_n\}$ such that $\E M_n(\beta)=1$ for all $n\in{\mathbb N}$. Let $\gamma_n$ denote the sequence of solutions to $\kappa_n(\gamma_n)=0$, and consider the change of measure
\[
\P^{(\gamma_n)}(A)=\E_0[ {\rm e}^{\gamma_n \ell_n}\mathbf{1}(A)], \:\:\: A\in\mathscr{F}_n \ .
\]

\begin{lemma}\label{lemma:kappa_Z}
If $\varphi_0\apost(0)>0$ and the initial workload is stationary $($i.e., $V_0=_{\rm d} V$$)$, then
\begin{equation}\label{eq:kappa_Z}
\kappa_1(\beta)=\beta w_1+ \log\left(\frac{\xi(\beta\theta_1+(1-\beta)\theta_0)\varphi_0'(0)}{\theta_0\varphi_0(\beta\theta_1+(1-\beta)\theta_0)}+\frac{1}{\theta_0}\E_0\left[(\theta_1- {\rm e}^{-\theta_1 V})^\beta (\theta_0- {\rm e}^{-\theta_0 V})^{1-\beta}\right]\right)\ .
 \end{equation} 
\end{lemma}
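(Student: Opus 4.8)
The plan is to evaluate $\kappa_1(\beta)=\log\E_0 {\rm e}^{\beta\ell_1}=\log\E_0 {\rm e}^{\beta Z_1}$ by a direct computation, using the representation \eqref{eq:Zi} of $Z_1=w_1+w_2\,Y_1V_0+(1-Y_1)g(V_0)$ together with \eqref{eq:P_idle_transient} and the generalized Pollaczek--Khintchine formula \eqref{eq:GPK}. The only randomness in $Z_1$ beyond $V_0$ enters through the $\{0,1\}$-valued idle indicator $Y_1$, so the first step is to exploit $Y_1(1-Y_1)=0$ to write
\[
 {\rm e}^{\beta Z_1}= {\rm e}^{\beta w_1}\bigl(Y_1\, {\rm e}^{\beta w_2 V_0}+(1-Y_1)\, {\rm e}^{\beta g(V_0)}\bigr)\ ,
\]
and then to condition on $V_0$. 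Since under ${\rm H}_0$ we have $\psi_0(\xi)=\theta_0$, relation \eqref{eq:P_idle_transient} reads $\P_0(Y_1=1\,|\,V_0)=h(V_0)=(\xi/\theta_0)\, {\rm e}^{-\theta_0 V_0}$, which gives
\[
\E_0[ {\rm e}^{\beta Z_1}\,|\,V_0]= {\rm e}^{\beta w_1}\bigl(h(V_0)\, {\rm e}^{\beta w_2 V_0}+(1-h(V_0))\, {\rm e}^{\beta g(V_0)}\bigr)\ .
\]

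The second step is elementary algebra followed by one application of \eqref{eq:GPK}. Using $1-h(v)=(\theta_0-\xi\, {\rm e}^{-\theta_0 v})/\theta_0$ and ${\rm e}^{\beta g(v)}=\bigl((\theta_1-\xi\, {\rm e}^{-\theta_1 v})/(\theta_0-\xi\, {\rm e}^{-\theta_0 v})\bigr)^{\beta}$, the second summand collapses to $\theta_0^{-1}(\theta_1-\xi\, {\rm e}^{-\theta_1 v})^{\beta}(\theta_0-\xi\, {\rm e}^{-\theta_0 v})^{1-\beta}$, while, since $w_2=\theta_0-\theta_1$, the first summand equals $(\xi/\theta_0)\, {\rm e}^{-(\beta\theta_1+(1-\beta)\theta_0)v}$. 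Taking the outer expectation over $V_0=_{\rm d}V$, the first term becomes $(\xi/\theta_0)\,\E_0 {\rm e}^{-(\beta\theta_1+(1-\beta)\theta_0)V}$, which by \eqref{eq:GPK} equals $(\xi/\theta_0)\,(\beta\theta_1+(1-\beta)\theta_0)\,\varphi_0'(0)/\varphi_0(\beta\theta_1+(1-\beta)\theta_0)$, and the second term is retained as the expectation appearing in \eqref{eq:kappa_Z}. Extracting $ {\rm e}^{\beta w_1}$ from the logarithm as the additive term $\beta w_1$ then yields \eqref{eq:kappa_Z}.

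There is no real computational obstacle here; the only point needing attention is the range of $\beta$ on which the identity is meaningful, namely those $\beta$ with $\beta\theta_1+(1-\beta)\theta_0>0$ (an interval containing $[0,1]$, since $\theta_0,\theta_1>\xi>0$). On this range the right-hand side of \eqref{eq:kappa_Z} is finite: $\varphi_0$ is convex with $\varphi_0(0)=0$ and $\varphi_0'(0)>0$, hence strictly positive on $(0,\infty)$, so \eqref{eq:GPK} applies; and the bases $\theta_j-\xi\, {\rm e}^{-\theta_j V}$ of the second expectation lie in $[\theta_j-\xi,\theta_j]$ because $\theta_j>\xi$ and $ {\rm e}^{-\theta_j V}\in(0,1]$, so that expectation is bounded. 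Finally, it should be noted that the stationarity hypothesis $V_0=_{\rm d}V$ is precisely what allows one to identify $\E_0 {\rm e}^{\beta Z_1}$ — and hence $\kappa_1$ — as the genuine per-increment cumulant generating function entering the change-of-measure argument of Section \ref{sec:CLRT_approx}.
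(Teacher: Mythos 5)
Your proof is correct and follows essentially the same route as the paper's own (very terse) proof: write $Z_1$ in the form $w_1+w_2Y_1V_0+(1-Y_1)g(V_0)$, use that $Y_1$ is conditionally Bernoulli with success probability $h(V_0)$ to compute $\E_0[{\rm e}^{\beta Z_1}\,|\,V_0]$, and then apply the generalized Pollaczek--Khintchine formula \eqref{eq:GPK} to the resulting LST term. One remark: your algebra correctly yields $(\theta_1-\xi\,{\rm e}^{-\theta_1 V})^{\beta}(\theta_0-\xi\,{\rm e}^{-\theta_0 V})^{1-\beta}$ inside the second expectation, whereas the displayed formula \eqref{eq:kappa_Z} omits the factors $\xi$ in the exponentials --- this appears to be a typo in the statement, so you should flag the discrepancy rather than assert that your expression ``is retained as the expectation appearing in \eqref{eq:kappa_Z}''.
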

\begin{proof}
Again, applying {PASTA} and the SLLN, in combination with $V_0=_{\rm d} V$, yields the stated.
First observe that
\[\kappa_1(\beta)= \log \E_0  {\rm e}^{\beta(w_1+w_2 v_0Y_1+(1-Y_1)g(V_0))},\]
which equals
\[\beta w_1 +\log\left(\E_0 \left( {\rm e}^{\beta w_2 V_0}h(V_0)+\E_0\left( {\rm e}^{\beta g(V_0)}(1-h(V_0))\right)\right)\right),\]
which, by (\ref{eq:GPK}), equals 
\eqref{eq:kappa_Z}.
\end{proof}

We now point out how $\alpha(x)$ can be approximated in the regime that the sampling rate is very slow, i.e., $\xi\to 0$. Then 
the $Z_i$ are (almost) independent and identically distributed (as the random variable $Z$). Hence, $M_n(\beta)$ roughly equals $\exp({\beta \ell_n-n\kappa_1(\beta)})$, where $\kappa_1(\beta)$ is given by~\eqref{eq:kappa_Z}. The Lundberg coefficient $\gamma>0$ is the solution to $\kappa_1(\gamma)=0$, which is unique because $\kappa_1\apost(0)=\E_{0} Z_1=m_0<0$. Consider the change of measure
\[
\P^{(\beta)}(A)=\E_0[M_n(\beta)\mathbf{1}(A)], \ \:\:A\in\mathscr{F}_n \ .
\]
Then applying  \cite[Thm.\ III.5.1]{book_A2003} we have that, with $C$ defined as $\lim_{x\to\infty}\E_{\gamma} {\rm e}^{-\gamma(N_x-x)}$,
\[
\alpha(x):=\P_{\mathrm{H}_0}(N_x<\infty)\approx C {\rm e}^{-\gamma x}\ ;
\]
a more explicit characterization of $C$ is given in  \cite{KOR}.

{For higher sampling rates the iid stationary approximation may not be satisfactory. Therefore, we now investigate to what extent the correlation between the $Z_i$ can be taken into account.}
Ideally, we would like to relate the stationary coefficient $\gamma$ to the sequence of coefficients $\gamma_n$ that satisfies $\kappa_n(\gamma_n)=0$. 
Analyzing the sequence $\gamma_n$ is challenging because it involves the joint distribution of $(Z_1,\ldots,Z_n)$. The following procedure can be followed.
Let $h_n(\beta):=\E_0  {\rm e}^{\beta \sum_{i=1}^n Z_i}$, and observe that $\kappa_n(\beta)=0$ is equivalent to $h_n(\beta)=1$. We have that
\[
h_2(\beta)=\E_0\left[ {\rm e}^{\beta Z_1+\beta Z_2}\right]=\mathrm{Cov}_0\left( {\rm e}^{\beta Z_1}, {\rm e}^{\beta Z_2}\right)+\E_0\left[ {\rm e}^{\beta Z_1}\right]\E_0\left[ {\rm e}^{\beta Z_2}\right]\ ,
\]
and as $Z_1$ and $Z_2$ both have the same marginal (and stationary) distribution we can write
\begin{equation}\label{eq:h2}
h_2(\beta)=\mathrm{Cov}_0\left( {\rm e}^{\beta Z_1}, {\rm e}^{\beta Z_2}\right)+(h_1(\beta))^2\ .
\end{equation}
Eqn.\ \eqref{eq:h2} can be generalized to
\begin{equation}\label{eq:hn}
\begin{split}
h_n(\beta) &= \mathrm{Cov}_0\left( {\rm e}^{\beta \sum_{i=1}^{n-1}Z_i}, {\rm e}^{\beta Z_n}\right)+h_1(\beta)h_{n-1}(\beta) \\
&= \mathrm{Cov}_0\left( {\rm e}^{\beta \sum_{i=1}^{n-1}Z_i}, {\rm e}^{\beta Z_n}\right)+h_1(\beta)\left(\mathrm{Cov}_0\left( {\rm e}^{\beta \sum_{i=1}^{n-2}Z_i}, {\rm e}^{\beta Z_{n-1}}\right)+h_1(\beta)h_{n-2}(\beta)\right) \\
& \ \ \vdots \\
&= \sum_{j=2}^{n}h_{1}(\beta)^{n-j}\mathrm{Cov}_0\left( {\rm e}^{\beta \sum_{i=1}^{j-1}Z_i}, {\rm e}^{\beta Z_j}\right)+h_1(\beta)^n \ .
\end{split}
\end{equation}

The conclusion is that the sequence $\gamma_n$ can be numerically approximated by simulating the solutions to \eqref{eq:hn}. Such a simulated sequence of solutions is illustrated in Figure \ref{fig:gamma_n}. In Section~\ref{sec:simulation} we explore the performance of the error approximations based on $\gamma_1$ and $\gamma_{20}$.

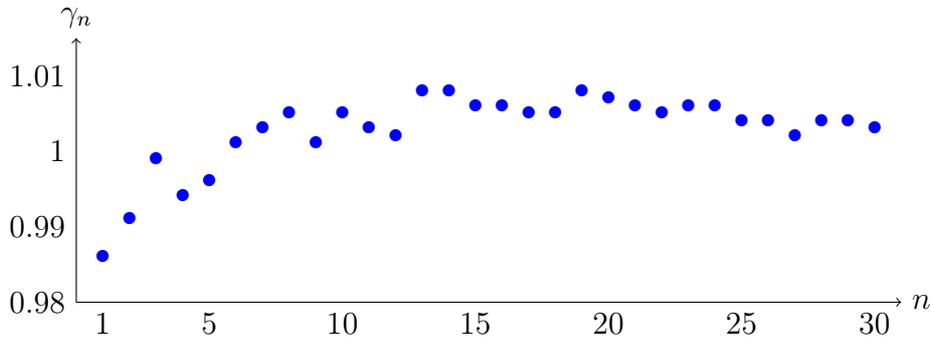
\begin{figure}[H]
\centering
\begin{tikzpicture}[xscale=0.35,yscale=100]
 \def\xmin{0}
 \def\xmax{31}
 \def\ymin{0.98}
 \def\ymax{1.015}
  \draw[->] (\xmin,\ymin) -- (\xmax,\ymin) node[right] {$n$} ;
  \draw[->] (\xmin,\ymin) -- (\xmin,\ymax) node[above] {$\gamma_n$} ;
  \foreach \x in {1,5,10,15,20,25,30}
  \node at (\x,\ymin) [below] {\x};
  \foreach \y in {0.98,0.99,1,1.01}
    \node at (0,\y) [left] {\y};

    \foreach \Point in {( 1 , 0.986 ), ( 2 , 0.991 ), ( 3 , 0.999 ), ( 4 , 0.994 ), ( 5 , 0.996 ), ( 6 , 1.001 ), ( 7 , 1.003 ), ( 8 , 1.005 ), ( 9 , 1.001 ), ( 10 , 1.005 ), ( 11 , 1.003 ), ( 12 , 1.002 ), ( 13 , 1.008 ), ( 14 , 1.008 ), ( 15 , 1.006 ), ( 16 , 1.006 ), ( 17 , 1.005 ), ( 18 , 1.005 ), ( 19 , 1.008 ), ( 20 , 1.007 ), ( 21 , 1.006 ), ( 22 , 1.005 ), ( 23 , 1.006 ), ( 24 , 1.006 ), ( 25 , 1.004 ), ( 26 , 1.004 ), ( 27 , 1.002 ), ( 28 , 1.004 ), ( 29 , 1.004 ), ( 30 , 1.003 )}
    {\node[blue] at \Point {\textbullet};}
    
\end{tikzpicture}
\caption{A solution via simulation to the sequence of solutions $\gamma_n$ to the change of measure equation. The alternative hypothesis were M/M/1 queues with $\lambda_0=0.6$ and $\lambda_1=0.8$ and $\mu_0=\mu_1=10$. The sampling rate is $\xi=3$.}
\label{fig:gamma_n}
\end{figure}

\section{Simulation analysis}\label{sec:simulation} %
In this section we present numerical results on tests corresponding to an M/M/1 queue, distinguishing between 
two values of the arrival rate  $\lambda$  ($\lambda_0=0.6$ and $\lambda_1=0.8$), for a given value of the service rate ($\mu=10$). `QPBT' refers to the test based on the quasi busy periods (Approach~I), and  `CLRT' to the conditional likelihood ratio test (Approach II). 

From our experiments, the following observations can be made:
\begin{itemize}
\item[$\circ$] The QBPT and CLRT tests presented in Sections \ref{sec:QBP} and \ref{sec:cond_LRT}  agree in over 80\% of the sample-path realizations. That is to say, in the vast majority of all cases, if one test rejects, then so does the other. The thresholds for both tests were chosen so that they have approximately the same significance level $\alpha(x)$. 
\item[$\circ$]  Both tests greatly outperform the na\"{\i}ve mean test $n^{-1}\sum_{i=1}^n V_i\geqslant x$ that was discussed in Remark \ref{R1}. This na\"{\i}ve  test has a very lower power for the same significance levels.
\item[$\circ$]  All approximation techniques presented in this paper perform reasonably well. See for example Figure \ref{fig:alpha_lrt_approx} that compares the type-I error probability of the CLRT, based on simulation with three approximations: the Brownian approximation of Section \ref{sec:normal_approx}, the change-of-measure approximation of Section \ref{sec:CLRT_approx} using $\gamma_1$,  and the change-of-measure approximation of Section \ref{sec:CLRT_approx} using $\gamma_{20}$.
\item[$\circ$]  In Figure \ref{fig:alpha_lrt_zeros} the type-I error probabilities are plotted for increasing sampling rates $\xi$. The thresholds are chosen so that the approximated error probability equals 5\%. 
For the QBPT, $x=-{\gamma}^{-1}\log (0.05)$, where $\gamma$ corresponds to the change-of-measure approximation presented in Section \ref{sec:QBP}.
For the CLRT, $x=-{\gamma_{20}}^{-1}\log (0.05)$, where $\gamma_{20}$ corresponds to the change-of-measure approximation presented in Section \ref{sec:CLRT_approx}. We observe that the approximation improves as the sampling rate increases.
\item[$\circ$] In Figure \ref{fig:beta_lrt_zeros} the type-II error probabilities $\beta$ are plotted for increasing sampling rates $\xi$, using the same parameters as in Figure \ref{fig:alpha_lrt_zeros}. The tests were truncated at $n=10^3$. For a low sampling rate, the truncation has no effect; we observe that $\beta\approx 1$, as is the case for $n\to\infty$. However, for higher sampling rates, many more observations are required, and in addition the test does not have power one. This is particularly true for the QBPT, were the power drops to almost 70\%, while for the CLRT it remains above 90\%. 
\item[$\circ$] Figure \ref{fig:tau_lrt_zeros} plots the corresponding expected number of samples until rejection (conditional on rejection before truncation). The QBPT test generally rejects faster, but as we saw in Figure \ref{fig:beta_lrt_zeros} this comes with a lower power, meaning that if the test was not truncated these expectations will be much higher. 
\end{itemize}

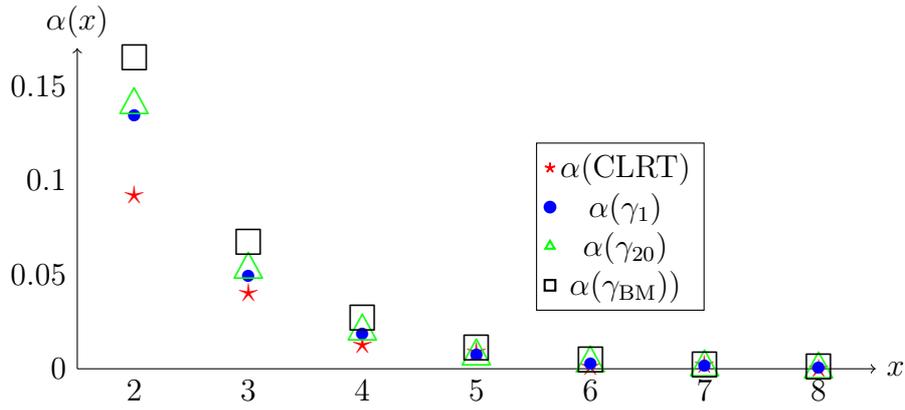
\begin{figure}[h]
\centering
\begin{tikzpicture}[xscale=1.5,yscale=25]
 \def\xmin{1.5}
  \def\xmax{8.5}
  \def\ymin{0}
  \def\ymax{0.17}
    \draw[->] (\xmin,\ymin) -- (\xmax,\ymin) node[right] {$x$} ;
    \draw[->] (\xmin,\ymin) -- (\xmin,\ymax) node[above] {$\alpha(x)$} ;
    \foreach \x in {2,3,4,5,6,7,8}
    \node at (\x,\ymin) [below] {\x};
    \foreach \y in {0,0.05,0.1,0.15}
    \node at (\xmin,\y) [left] {\y};
    
    \foreach \Point in { ( 2 , 0.092 ), ( 3 , 0.04 ), ( 4 , 0.013 ), ( 5 , 0.009 ), ( 6 , 0.001 ), ( 7 , 0.002 ), ( 8 , 0 )}	
    {\node[red] at \Point {\scalebox{1.2}{$\star$}};}

    \foreach \Point in {( 2 , 0.134 ), ( 3 , 0.049 ), ( 4 , 0.018 ), ( 5 , 0.007 ), ( 6 , 0.002 ), ( 7 , 0.001 ), ( 8 , 0 )}
    {\node[blue] at \Point {\textbullet};}
    
    \foreach \Point in {( 2 , 0.14 ), ( 3 , 0.053 ), ( 4 , 0.02 ), ( 5 , 0.007 ), ( 6 , 0.003 ), ( 7 , 0.001 ), ( 8 , 0 )}
    {\node[green] at \Point  {\scalebox{1.2}{$\triangle$}};}
    
    \foreach \Point in {( 2 , 0.165 ), ( 3 , 0.067 ), ( 4 , 0.027 ), ( 5 , 0.011 ), ( 6 , 0.005 ), ( 7 , 0.002 ), ( 8 , 0.001 )}
    {\node[black] at \Point  {\scalebox{1.2}{$\square$}};}

    \begin{customlegend}
    [legend entries={ $\alpha(\mathrm{CLRT})$, $\alpha(\gamma_1)$, $\alpha(\gamma_{20})$, $\alpha(\gamma_{\mathrm{BM}}))$},
    legend style={at={(7,0.12)}}]    
    \addlegendimage{red,mark=star,only marks,thick } 
    \addlegendimage{blue,mark=*,only marks,thick} 
    \addlegendimage{green,mark=triangle,only marks,thick }
    \addlegendimage{mark=square,only marks,thick }
    \end{customlegend}
\end{tikzpicture}
\caption{Probability $\alpha$(CLRT) of type-I errors as a function of the threshold $x$ for the CLRT, together with three approximations. 
The (non-approximative) probabilities were computed using simulation with $n=10^3$ samples.
 The sampling rate is $\xi=3$.}
\label{fig:alpha_lrt_approx}
\end{figure}

\begin{figure}[H]
\centering
\begin{tikzpicture}[xscale=1.2,yscale=60]
  \def\xmin{0}
  \def\xmax{11}
  \def\ymin{0}
  \def\ymax{0.06}
    \draw[->] (\xmin,\ymin) -- (\xmax,\ymin) node[right] {$\xi$} ;
    \draw[->] (\xmin,\ymin) -- (\xmin,\ymax) node[above] {$\alpha(x)$} ;
    \foreach \x in {0.5,1,2,3,4,5,6,7,8,9,10}
    \node at (\x,\ymin) [below] {\x};
    \foreach \y in {0,0.01,0.03,0.05}
    \node at (0,\y) [left] {\y};
    
    \foreach \Point in { ( 0.5 , 0.03 ), ( 1 , 0.037 ), ( 2 , 0.036 ), ( 3 , 0.037 ), ( 4 , 0.042 ), ( 5 , 0.038 ), ( 6 , 0.042 ), ( 7 , 0.034 ), ( 8 , 0.042 ), ( 9 , 0.038 ), ( 10 , 0.038 )}	
    {\node[red] at \Point {\scalebox{1.2}{$\star$}};}

    \foreach \Point in {( 0.5 , 0.027 ), ( 1 , 0.033 ), ( 2 , 0.032 ), ( 3 , 0.034 ), ( 4 , 0.04 ), ( 5 , 0.038 ), ( 6 , 0.04 ), ( 7 , 0.04 ), ( 8 , 0.047 ), ( 9 , 0.044 ), ( 10 , 0.048 )}
    {\node[blue] at \Point {\textbullet};}
    
    \draw[black, thick, dashed] (0,0.05)--(\xmax,0.05);

    \begin{customlegend}
    [legend entries={$\alpha(\mathrm{QBPT})$, $\alpha(\mathrm{CLRT})$},
    legend style={at={(7,0.02)}}]    
    \addlegendimage{red,mark=star,only marks,thick } 
    \addlegendimage{blue,mark=*,only marks,thick} 
    \end{customlegend}
\end{tikzpicture}
\caption{Probability of type-I errors using both approaches ($\alpha$(CLRT) and $\alpha$(QBPT)). 
The (non-approximative) probabilities were computed using simulation with $n=10^3$ samples.}
\label{fig:alpha_lrt_zeros}
\end{figure}

\begin{figure}[H]
\centering
\begin{tikzpicture}[xscale=1.2,yscale=13]
  \def\xmin{0}
  \def\xmax{11}
  \def\ymin{0.7}
  \def\ymax{1.05}
    \draw[->] (\xmin,\ymin) -- (\xmax,\ymin) node[right] {$\xi$} ;
    \draw[->] (\xmin,\ymin) -- (\xmin,\ymax) node[above] {$\beta(x)$} ;
    \foreach \x in {0.5,1,2,3,4,5,6,7,8,9,10}
    \node at (\x,\ymin) [below] {\x};
    \foreach \y in {0.7,0.8,0.9,1}
    \node at (0,\y) [left] {\y};
    
    \foreach \Point in { ( 0.5 , 1 ), ( 1 , 1 ), ( 2 , 0.999 ), ( 3 , 0.992 ), ( 4 , 0.977 ), ( 5 , 0.949 ), ( 6 , 0.917 ), ( 7 , 0.872 ), ( 8 , 0.833 ), ( 9 , 0.797 ), ( 10 , 0.74 )}	
    {\node[red] at \Point {\scalebox{1.2}{$\star$}};}

    \foreach \Point in {( 0.5 , 1 ), ( 1 , 1 ), ( 2 , 1 ), ( 3 , 1 ), ( 4 , 0.999 ), ( 5 , 0.995 ), ( 6 , 0.988 ), ( 7 , 0.977 ), ( 8 , 0.961 ), ( 9 , 0.947 ), ( 10 , 0.923 )}
    {\node[blue] at \Point {\textbullet};}
    
    \draw[black, thick, dashed] (0,1)--(\xmax,1);

    \begin{customlegend}
    [legend entries={$\beta(\mathrm{QBPT})$, $\beta(\mathrm{CLRT})$},
    legend style={at={(6.5,0.8)}}]    
    \addlegendimage{red,mark=star,only marks,thick } 
    \addlegendimage{blue,mark=*,only marks,thick} 
    \end{customlegend}
\end{tikzpicture}
\caption{
Probability of type-II errors using both approaches ($\beta$(CLRT) and $\beta$(QBPT)).
The (non-approximative) probabilities were computed using simulation with $n=10^3$ samples.}
\label{fig:beta_lrt_zeros}
\end{figure}

\begin{figure}[h]
\centering
\begin{tikzpicture}[xscale=1.2,yscale=0.006]
  \def\xmin{0}
  \def\xmax{11}
  \def\ymin{0}
  \def\ymax{550}
    \draw[->] (\xmin,\ymin) -- (\xmax,\ymin) node[right] {$\xi$} ;
    \draw[->] (\xmin,\ymin) -- (\xmin,\ymax) node[above] {$\tau(x)$} ;
    \foreach \x in {0.5,1,2,3,4,5,6,7,8,9,10}
    \node at (\x,\ymin) [below] {\x};
    \foreach \y in {0,100,200,300,400,500}
    \node at (0,\y) [left] {\y};
    
    \foreach \Point in { ( 0.5 , 77.948 ), ( 1 , 114.726 ), ( 2 , 183.3 ), ( 3 , 250.039 ), ( 4 , 305.414 ), ( 5 , 362.659 ), ( 6 , 402.015 ), ( 7 , 444.47 ), ( 8 , 469.136 ), ( 9 , 501.973 ), ( 10 , 527.425 )}	
    {\node[red] at \Point {\scalebox{1.2}{$\star$}};}

    \foreach \Point in {( 0.5 , 64.018 ), ( 1 , 78.144 ), ( 2 , 119.817 ), ( 3 , 161.546 ), ( 4 , 202.176 ), ( 5 , 250.258 ), ( 6 , 286.345 ), ( 7 , 329.28 ), ( 8 , 352.947 ), ( 9 , 392.918 ), ( 10 , 417.662 )}
    {\node[blue] at \Point {\textbullet};}
    
    

    \begin{customlegend}
    [legend entries={$\tau(\mathrm{QBPT})$, $\tau(\mathrm{CLRT})$},    legend style={at={(8.5,180)}}]    
    \addlegendimage{red,mark=star,only marks,thick } 
    \addlegendimage{blue,mark=*,only marks,thick} 
    \end{customlegend}
\end{tikzpicture}
\caption{
Expected number of samples until rejection of the null-hypothesis (under $\mathrm{H}_1$), using both approaches ($\tau$(CLRT) and $\tau$(QBPT)). The (non-approximative) probabilities were computed using simulation with $n=10^3$ samples.}
\label{fig:tau_lrt_zeros}
\end{figure}
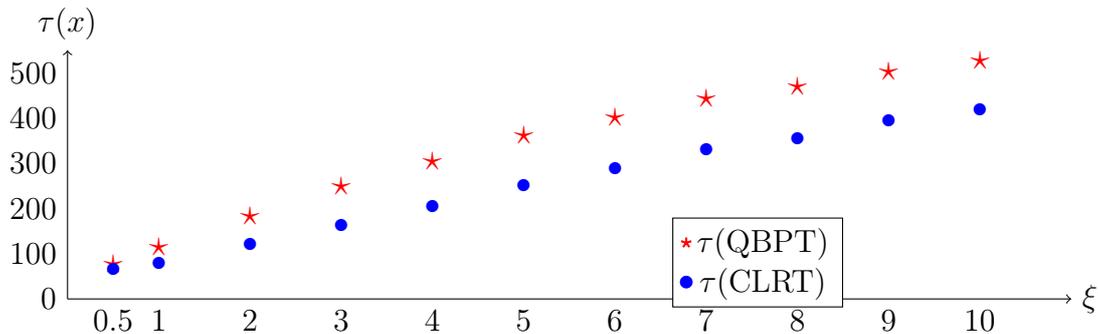

\section{Discussion and concluding remarks}\label{sec:DCR}
This paper has focused on hypothesis testing for L\'evy-driven storage systems. Tests are developed that are capable of distinguishing a Laplace exponent $\varphi_0(\cdot)$ from an alternative Laplace exponent $\varphi_1(\cdot).$ In the setup considered the driving L\'evy process is a subordinator minus a deterministic drift. In both approaches proposed, we exploit the property that for such L\'evy processes the workload level is zero with positive probability. An open challenge is to set up tests for more general L\'evy input. We remark that reflected Brownian motion is not covered by the current framework, but it can be dealt with due to the fact that for this specific model the likelihood can be evaluated in closed form. 

In both approaches information is `lost': in the first approach it is only used whether a workload sample is positive or zero (thus leading to i.i.d.\ quasi busy periods), in the second approach it is only used whether a workload sample is 0 conditional on the previous observation. One thus wonders whether one could develop a test in which `more information is used'. Due to the lack of an explicit expression for the joint density of $V_1,\ldots, V_n$, inevitably, either some information provided by the sample will be lost, or the likelihood of the sample is to be evaluated in an approximate manner.

An important extension of this work is to consider composite hypotheses of the form
\[
\begin{array}{cc}
{\rm H}_0: & \varphi \in \Phi_0 \ , \\
{\rm H}_1: & \varphi \in \Phi_1 \ ,
\end{array} 
\]
where $\Phi_0,\Phi_1$ are disjoint collections of Laplace exponents corresponding to possible L\'evy input distributions. In this case one can construct a generalized likelihood-ratio test (GLRT), for some $x>0$,
\[
L_n:= \frac{\sup_{\varphi \in \Phi_1}\P_{\varphi}(V_{1},\ldots, V_n)}{\sup_{\varphi \in \Phi_0}\P_{\varphi}(V_{1},\ldots, V_n)}\ \geqslant x\ ,
\]
where the likelihood $\P_{\varphi}$ is given by the probability measure associated with the input process with exponent $\varphi$. Note that the likelihoods can correspond to either of the two methods presented in this work. The GLRT poses two main challenges. First of all, the practical problem of evaluating the supremum terms. In our quasi-busy-period approach the likelihood terms are not explicit and can only be evaluated numerically using the recursive formulas (involving the derivatives of $\varphi$ and its inverse) presented in Section \ref{sec:QBP}. Therefore, maximizing these functions on an elaborate function space seems challenging. If $\Phi_i$, $i=1,2$, are parametric families, e.g., $\Phi_0$ corresponds to Compound Poisson and $\Phi_1$ corresponds to Gamma processes, then ad hoc computational methods for the GLRT computation can potentially be constructed. For the second approach, namely the conditional likelihood ratio test, this task becomes easier because the value of the suprema can be evaluated by applying the MLE method of \cite{RBM2019}. The second challenge is the asymptotic performance analysis of the GLRT. As we saw, this is not straightforward even in the context of simple hypothesis. Standard results may apply to the QBPT method due to the iid observations, although this method may be hard to implement in practice. For the CLRT the issue of weak dependence will have to be again carefully addressed for the composite case.

Poisson sampling has evident computational advantages, as pointed out in our paper, but in some practical settings one may prefer equidistant sampling. The techniques presented in \cite{SBM2016} could potentially be used to approximate the corresponding density, but computational challenges are anticipated. Alternatively, the likelihood can be evaluated by numerical inversion of (\ref{eq:V_LST_transient}); this concerns an inversion to obtain a density from a transform as well as an inversion to translate the exponentially distributed time into its deterministic counterpart. In a related setting, such (double) inversion techniques have been explored in \cite{AdIM}.

\section{Appendix: Proofs for Section \ref{sec:large_sample}}

\begin{proof}[Proof of Lemma \ref{lemma:FCLT_condition}] First realize that in our setting $\varphi_k\apost(0)>0$ (i.e., stability of the storage system) implies
$\P_k(V_0<\infty)=1$. For ease, we leave out the subscript $k$ throughout the proof.
(a) By \eqref{eq:Zi} and \eqref{eq:l0}, $\sum_{n=1}^\infty (\E[Z_i\,|\,V_0]-m_k)=d(V_0)+e(V_0)+f(V_0)$, where
\begin{align*}
d(V_0)&: =  \sum_{n=1}^\infty \big(\E\left[Y_{n}V_{n-1}\,|\,V_0\right]-\E\left[Y_1V_{0}\right]\big)\ ,
\\
e(V_0)&:=\sum_{n=1}^\infty \big(\E\left[ g(V_{n-1})\,|\,V_0\right]-\E\left[g(V_0) \right]\big) \ ,
\\
f(V_0)&:=-\sum_{n=1}^\infty \big(\E\left[ Y_{n}g(V_{n-1})\,|\,V_0\right]-\E\left[Y_{1}g(V_0) \right]\big) \ .
\end{align*}
We deal with the three terms separately.

{\it --~First term.} Recall that $Y_n=\mathbf{1}(V_n=0)$ and $\P(V_n=0|V_{n-1})=\frac{\xi}{\theta_k} {\rm e}^{-\theta_k V_{n-1}}$ by \eqref{eq:P_idle_transient}. Thus,
\[
\E\left[Y_{n}V_{n-1}\,|\,V_0\right]=\frac{\xi}{\theta_k}\E\left[V_{n-1} {\rm e}^{-\theta_k V_{n-1}}\,|\,V_0\right]\ , 
\]
and similarly
\[
\E\left[Y_1V_{0}\right]=\frac{\xi}{\theta_k}\E\left[V_{0} {\rm e}^{-\theta_k V_{0}}\right]\ ,
\]
yielding
\[
d(V_0) = \frac{\xi}{\theta_k} \sum_{n=1}^\infty \big(\E\left[V_{n-1} {\rm e}^{-\theta_k V_{n-1}}\,|\,V_0\right]-\E\left[V_{0} {\rm e}^{-\theta_k V_{0}}\right]\big)\ .
\]
Therefore, by Theorem \ref{thm:LST_convergence_anyV}, $d(V_0) = ({\xi^2}/{\theta_k})\cdot k_3(V_0,\theta_k)$.

{\it --~Second term.} The function $g(\cdot)$ can be written, by a Taylor expansion, as
\begin{equation}\label{eq:g_Taylor}
g(v)=\frac{1}{w_1}+\sum_{j=1}^\infty\frac{1}{j}\left(\left(\frac{\xi}{\theta_0}\right)^j {\rm e}^{-\theta_0 j v}-\left(\frac{\xi}{\theta_1}\right)^j {\rm e}^{-\theta_1 j v}\right)\ .
\end{equation}
Therefore,
\[
\begin{split}
e(V_0)&= \sum_{j=1}^\infty\frac{1}{j}\left(\frac{\xi}{\theta_0}\right)^j\sum_{n=1}^\infty\left(\E\left[ {\rm e}^{-\theta_0 j V_{n}}\,|\,V_0\right]-\E\left[ {\rm e}^{-\theta_0 j V}\right]\right) \\
&\quad -\sum_{j=1}^\infty\frac{1}{j}\left(\frac{\xi}{\theta_1}\right)^j\sum_{n=1}^\infty\left(\E\left[ {\rm e}^{-\theta_1 j V_{n}}\,|\,V_0\right]-\E\left[ {\rm e}^{-\theta_1 j V}\right]\right) \ ,
\end{split}
\]
and by \eqref{eq:LST_convegence_anyV} (part (ii) of Theorem \ref{thm:LST_convergence_anyV}),
\[
e(V_0)= \xi\sum_{j=1}^\infty\frac{1}{j}\left(\left(\frac{\xi}{\theta_0}\right)^j k_2(V_0, \theta_0 j) -\left(\frac{\xi}{\theta_1}\right)^j k_2(V_0, \theta_1 j)\right) \ .
\]

{\it --~Third term.} To deal with the third term, we combine the arguments used for the previous two terms. Firstly, conditioning on $Y_{n-1}$, we obtain
\[
f(V_0)=-\frac{\xi}{\theta_k}\sum_{n=1}^\infty \big(\E\left[  {\rm e}^{-\theta_k V_{n-1}}g(V_{n-1})\,|\,V_0\right]-\E\left[ {\rm e}^{-\theta_k V_{0}}g(V_0) \right]\big) \ .
\]
Using the Taylor expansion of $g(\cdot)$, this yields 
\[
\begin{split}
f(V_0)&=-\frac{\xi}{\theta_k} \sum_{j=1}^\infty\frac{1}{j}\left(\frac{\xi}{\theta_0}\right)^j\sum_{n=1}^\infty\left(\E\left[ {\rm e}^{-(\theta_0 j+\theta_k) V_{n}}\,|\,V_0\right]-\E\left[ {\rm e}^{-(\theta_0 j+\theta_k)V}\right]\right) \\
&\quad -\sum_{j=1}^\infty\frac{1}{j}\left(\frac{\xi}{\theta_1}\right)^j\sum_{n=1}^\infty\left(\E\left[ {\rm e}^{-(\theta_1 j+\theta_k) V_{n}}\,|\,V_0\right]-\E\left[ {\rm e}^{-(\theta_1 j+\theta_k) V}\right]\right) \ .
\end{split}
\]
Then, by \eqref{eq:LST_convegence_anyV} (part (ii) of Theorem \ref{thm:LST_convergence_anyV}),
\[
f(V_0)=- \frac{\xi^2}{\theta_k} \sum_{j=1}^\infty\frac{1}{j}\left(\left(\frac{\xi}{\theta_0}\right)^j k_2(V_0, \theta_0 j+\theta_k) -\left(\frac{\xi}{\theta_1}\right)^j k_2(V_0, \theta_1 j+\theta_k)\right) \ .
\]
Combining all of the above, we conclude \eqref{eq:dZ_V0}.

(b) By the triangle inequality we have $|z(V_0)|\leqslant |d(V_0)|+|e(V_0)|+|f(V_0)|$. If $\P(V_0<\infty)=1$, then \eqref{eq:EVeV_convegence_anyV} (part (iii) of Theorem \ref{thm:LST_convergence_anyV}) implies that $|k_2(V_0,\theta)|<\infty$ and $|k_3(V_0,\theta)|<\infty$ for any $\theta\in(0,\infty)$, so that $|d(V_0)|<\infty$. Furthermore, denoting, for $k=0,1$,
\[
a_k:=\sum_{j=1}^\infty\frac{1}{j}\left(\frac{\xi}{\theta_k}\right)^j,
\]
we have $a_k<\infty$ (using that ${\xi}/{\theta_k}<1$), and
\[
e(V_0)= \xi a_0\sum_{j=1}^\infty\frac{1}{j a_0}\left(\frac{\xi}{\theta_0}\right)^j k_2(V_0, \theta_0 j) -\xi a_1\sum_{j=1}^\infty\frac{1}{j a_1}\left(\frac{\xi}{\theta_1}\right)^j k_2(V_0, \theta_1 j) \ .
\]
For the first sum we apply Lemma \ref{lemma: k2_bound}:
\[
\begin{split}
\left|\sum_{j=1}^\infty\frac{1}{j a_0}\left(\frac{\xi}{\theta_0}\right)^j k_2(V_0, \theta_0 j)\right| &\leqslant \sum_{j=1}^\infty\frac{1}{j a_0}\left(\frac{\xi}{\theta_0}\right)^j \left|k_2(V_0, \theta_0 j)\right| \\
&\leqslant \sum_{j=1}^\infty\frac{1}{j a_0}\left(\frac{\xi}{\theta_0}\right)^j k_2^\star(V_0)=k_2^\star(V_0) \ .
\end{split}
\]
The same argument can be used for the second sum in $e(V_0)$. Then the triangle inequality yields
$
|e(V_0)|\leqslant \xi(a_0+a_1) k_2^\star(V_0),$
which is finite almost surely if $\P(V_0<\infty)=1$. The same argument yields that $|f(V_0)|\leqslant A k_2^\star(V_0)<\infty$ for some positive constant $A$ almost surely.

(c) Finally, we will verify that the $z(V)$ has a finite second moment by considering
\[
\begin{split}
z(V)^2&=|z(V)|^2\leqslant (|d(V)|+|e(V)|+|f(V)|)^2 \\
&= |d(V)|^2+|e(V)|^2+|f(V)|^2+2\,|d(V)|\cdot|e(V)|+2\,|d(V)|\cdot|f(V)|+2\,|e(V)|\cdot|f(V)| \ .
\end{split}
\]
Using the upper bounds established in part (b) we have that
\[
z(V)^2\leqslant A_1 k_3(V_0,\theta_k)^2+A_2k_2^\star(V_0)^2+A_3k_2^\star(V_0)\left|k_3(V_0,\theta_k)\right|\ ,
\]
for some positive and finite constants $A_1,A_2,A_3$. Hence, by the definitions of $k_2(v,\alpha)$ and $k_3(v,\alpha)$ in Theorem \ref{thm:LST_convergence_anyV} and $k_2^\star(V_0)$ in Lemma \ref{lemma: k2_bound}, $z(V)^2$ is upper bounded by a linear combination of terms of the form $V$, $V^2$, $ {\rm e}^{-\alpha V}$, $V {\rm e}^{-\alpha V}$ and $V^2 {\rm e}^{-\alpha V}$ (with $\alpha\in(0,2\max\{\theta_0,\theta_1,\alpha^\star\}]$). Therefore, appealing to \eqref{eq:V_moments}, if $\varphi\apost(0)>0$ and $|\varphi_k^{(3)}(0)|<\infty$, then $\E[z(V)^2]<\infty$.
\end{proof}

\begin{proof}[Proof of Proposition \ref{prop:ln_sigma}]
If $Z_0$ is stationary then $Z_i=_{\rm d} Z_0$ for any $i\in{\mathbb N}$. The first claim then follows from the standard identity
\[\Var_k\,M_n = n\,\Var_k\,Z_1+2\sum_{i=1}^{n-1}(n-i)\,\Cov_k(Z_1,Z_{i+1}) \ ;\]
the decomposition of $c_{ki}$ in \eqref{eq:cov_ki} is obtained by plugging in \eqref{eq:Zi}, and using the assumption that $V_0$ is stationary.

It is left to prove that $\sum_{i=1}^\infty |c_{ki}|<\infty$, which we show by using that $V_n$ converges fast enough to its stationary distribution, relying on Lemma \ref{lemma:LST_convergence}. Below we denote the density of the workload level at sample $n$ by $f_{n}(\cdot)$; likewise, the density of the conditional workload at sample $n$ given an initial workload $V_0=v_0$ is denoted by by $f_n(\cdot\,|\,v_0)$. We treat the four individual terms in the right-hand side of (\ref{eq:cov_ki}) separately. For ease, we leave out the subscript $k$ throughout the proof.

{\it --~First term.} Recall that $Y_n=\mathbf{1}(V_n=0)$. Thus,
\[
\E\left[V_0 Y_1 V_{i}Y_{i+1}\right]=\E\left[V_0V_{i}\mathbf{1}(V_{i+1}=0)\mathbf{1}(V_1=0)\right]= \E\left[V_0V_{i}\mathbf{1}(V_{i+1}=0)\,|\,V_1=0\right]\P(V_1=0)\ .
\]
As $V_{i+1}$ and $V_0$ are independent conditional on $V_1$ for any $i\in{\mathbb N}$, we have that
\[
\begin{split}
\E\left[V_0 Y_1 V_{i}Y_{i+1}\right]&= \E\left[V_0\,|\,V_1=0\right]\P(V_1=0)\E\left[V_{i}\mathbf{1}(V_{i+1}=0)\,|\,V_1=0\right] \\ 
&= \E\left[V_0\mathbf{1}(V_1=0)\right]\,\E\left[V_{i}\mathbf{1}(V_{i+1}=0)\,|\,V_1=0\right] \ .
\end{split}
\]
As a consequence, the first term of \eqref{eq:cov_ki} can be written as
\[
w_2^2\,\E\left[V_0\mathbf{1}(V_1=0)\right]a_i \ ,\:\:\:
a_i:=\E\left[V_{i}\mathbf{1}(V_{i+1}=0)\,|\,V_1=0\right]-\E\left[V_0\mathbf{1}(V_1=0)\right]\ .
\]
Next, applying \eqref{eq:P_idle_transient} yields, for $i\in{\mathbb N}$,
\[
\begin{split}
a_i &= \int_0^\infty v\,\P(V_{i+1}=0\,|\,V_{i}=v) f_{i-1}(v\,|\,0) \diff v- \int_0^\infty v\,\P(V_1=0\,|\,V_{0}=v) f_{0}(v) \diff v \\ 
&= \frac{\xi}{\theta_k}\left(\E\left[V_{i-1} {\rm e}^{-\theta_k V_{i-1}}\,|\,V_0=0\right]-\E\left[V {\rm e}^{-\theta_k V}\right]\right) \ .
\end{split} 
\]
Applying \eqref{eq:EVeV_convegence} in Lemma \ref{lemma:LST_convergence} we conclude that if $|\varphi_k^{(3)}(0)|<\infty$, then
$
\sum_{i=1}^\infty |a_i\,|\,<\infty.
$

{\it --~Second term.} Applying similar arguments, the second term term of \eqref{eq:cov_ki} is given by
\[
w_2\left(\E\left[V_0 Y_1g(V_{i}) (1-Y_{i+1})\right]-\E\left[V_0 Y_1\right]\E\left[g(V_{0})(1-Y_{1})\right]\right)=w_2\,\E\left[V_0 Y_1\right]b_i\ ,
\]
where 
\[
b_i:=\E\left[ g(V_{i})\mathbf{1}(V_{i+1}>0)\,|\,V_1=0\right]-\E\left[g(V_{0})\mathbf{1}(V_1>0)\right] \ .
\]
In order to apply Lemma \ref{lemma:LST_convergence} we first apply a Taylor expansion to $b_i$. First, conditioning on $V_{i-1}$ and applying \eqref{eq:P_idle_transient} yields
\[
\E\left[ g(V_{i})\mathbf{1}(V_{i+1}>0)|V_1=0\right] =\E\left[ \left(1-\frac{\xi}{\theta_k} {\rm e}^{-\theta_k V_{i}}\right)g(V_{i})|V_1=0\right] \ .
\]
We thus have that $b_i=c_i-d_i$, where
\begin{align*}
c_i&:=\E\left[ g(V_{i})\,|\,V_1=0\right]-\E\left[g(V_{0})\right] \ ,
\\
d_i&:=\E\left[\frac{\xi}{\theta_k} {\rm e}^{-\theta_k V_{i}}g(V_{i})\,|\,V_1=0\right]-\E\left[\frac{\xi}{\theta_k} {\rm e}^{-\theta_k V_{0}}g(V_{0})\right] \ .
\end{align*}
Applying the Taylor expansion \eqref{eq:g_Taylor} yields
\[
\begin{split}
\sum_{i=1}^\infty c_i &= \sum_{j=1}^\infty\frac{1}{j}\left(\frac{\xi}{\theta_0}\right)^j\sum_{i=1}^\infty\left(\E\left[ {\rm e}^{-\theta_0 j V_{i}}\,|\,V_1=0\right]-\E\left[ {\rm e}^{-\theta_0 j V_{0}}\right]\right) \,-\\
&\quad\:\sum_{j=1}^\infty\frac{1}{j}\left(\frac{\xi}{\theta_1}\right)^j\sum_{i=1}^\infty\left(\E\left[ {\rm e}^{-\theta_1 j V_{i}}\,|\,V_1=0\right]-\E\left[ {\rm e}^{-\theta_1 j V_{0}}\right]\right) \ ,
\end{split}
\]
We prove the finiteness of the first term in the right-hand side in the previous display; the other term follows analogously.
By Corollary \ref{C2}, and recalling $\xi<\theta_0$, 
\[\sum_{j=1}^\infty\frac{1}{j}\left(\frac{\xi}{\theta_0}\right)^j\sum_{i=1}^\infty\left(\E\left[ {\rm e}^{-\theta_0 j V_{i}}\,|\,V_1=0\right]-\E\left[ {\rm e}^{-\theta_0 j V_{0}}\right]\right)\leqslant \Xi \sum_{j=1}^\infty\frac{1}{j}\left(\frac{\xi}{\theta_0}\right)^j =-\Xi\log\left(1-\frac{\xi}{\theta_0}\right). \]
Hence by applying the triangle inequality we conclude that $\sum_{i=1}^\infty |c_i\,|\, <\infty$. Similarly, a Taylor expansion \eqref{eq:g_Taylor} yields
\[
\begin{split}
d_i &= \frac{\xi}{\theta_k}\frac{1}{w_1}\left(\E\left[ {\rm e}^{-\theta_k V_{i}}\,|\,V_1=0\right]-\E\left[ {\rm e}^{-\theta_k V_{0}}\right] \right) \\
&\quad +\frac{\xi}{\theta_k} \sum_{j=1}^\infty\frac{1}{j}\left(\frac{\xi}{\theta_0}\right)^j\left(\E\left[ {\rm e}^{-(\theta_0 j+\theta_k) V_{i}}\,|\,V_1=0\right]-\E_k\left[ {\rm e}^{-(\theta_0 j+\theta_k) V_{0}}\right]\right) \\
&\quad -\frac{\xi}{\theta_k}\sum_{j=1}^\infty\frac{1}{j}\left(\frac{\xi}{\theta_1}\right)^j\left(\E\left[ {\rm e}^{-(\theta_1 j+\theta_k) V_{i}}\,|\,V_1=0\right]-\E\left[ {\rm e}^{-(\theta_1 j+\theta_k)V_{0}}\right]\right) \ .
\end{split}
\]
Again, by applying Lemma \ref{lemma:LST_convergence} and Corollary \ref{C2} we conclude that $\sum_{i=1}^\infty |d_i\,|\, <\infty$, and by the triangle inequality that
$
\sum_{i=1}^\infty |b_i\,|\,<\infty.$

{\it --~Third term.} By conditioning on $V_1>0$, the third term of \eqref{eq:cov_ki} equals
\[
w_2\left(\E\left[g(V_0)(1-Y_1)V_i Y_{i+1}\right] - \E\left[V_0 Y_1\right]\E\left[(1-Y_{1})g(V_{0})\right]\right)=w_2\,\E\left[g(V_0)(1-Y_1)\right]e_i\ ,
\]
where
\[
e_i:=\E\left[V_{i}\mathbf{1}(V_{i+1}=0)|V_1>0\right]-\E\left[V_0\mathbf{1}(V_1=0)\right]\ .
\]
Now observe that
\begin{align*} \bar e_i\::=&\,\E\left[V_{i}\mathbf{1}(V_{i+1}=0)\,|\,V_1=0\right] - \E\left[V_{i}\mathbf{1}(V_{i+1}=0)\,|\,V_1>0\right]\\
=&\,\E\left[V_{i}\mathbf{1}(V_{i+1}=0)\,|\,V_1=0\right] - \frac{\E\left[V_{i}\mathbf{1}(V_{i+1}=0)\right]-\E\left[V_{i}\mathbf{1}(V_{i+1}=0)\,|\,V_1=0\right]{\P(V_1=0)}}{\P(V_1>0)}\\
=&\, \frac{\E\left[V_{i}\mathbf{1}(V_{i+1}=0)\,|\,V_1=0\right] - \E\left[V_{i}\mathbf{1}(V_{i+1}=0)\right]}{\P(V_1>0)} =\frac{a_i}{\varphi'(0)}. 
\end{align*} From $e_i=a_i-\bar e_i=a_i(1-1/\varphi'(0))$ and the first term, it follows that $\sum_{i=1}^\infty |e_i\,|\,<\infty.$

{\it --~Fourth term.} The last term of \eqref{eq:cov_ki} can be expressed as
\[
\E\left[ g(V_{0})(1-Y_{1})g(V_{i})(1-Y_{i+1})\right]-\E^2\left[g(V_0) (1-Y_{1})\right]=\E\left[g(V_0)(1-Y_1)\right]f_i\ ,
\]
where
\[
f_i:=\E\left[g(V_{i})\mathbf{1}(V_{i+1}>0)\,|\,V_1>0\right]-\E\left[g(V_0)\mathbf{1}(V_1>0)\right]\ .
\]
Then it is a matter of straightforwardly combining ideas from the second and third term to prove that $\sum_{i=1}^\infty |f_i\,|\,<\infty.$

Now that we have bounds on all four terms, by applying the triangle inequality once more we conclude that there exists a constant $0<\kappa <\infty$ such that
\[
\sum_{i=1}^{\infty}|c_{ki}| \leqslant \kappa \left(\sum_{i=1}^{\infty}|a_{i}|+\sum_{i=1}^{\infty}|b_{i}|+\sum_{i=1}^{\infty}|e_{i}|+\sum_{i=1}^{\infty}|f_{i}|\right) <\infty\ ,
\]
and we thus conclude that $\sigma_k^2<\infty$. 
\end{proof}

\bibliographystyle{abbrv}

\begin{thebibliography}{99}

\bibitem{AdIM}
N. Asghari, P. den Iseger, and M. Mandjes.
Numerical techniques in L\'evy fluctuation theory.
{\it Methodology and Computing in Applied Probability,}
{\bf 16}, 31--52, 2014.


\bibitem{book_A2003}
S.~Asmussen.
\newblock {\em {Applied Probability and Queues}}.
\newblock Springer, New York, NY, USA, 2003.


\bibitem{BR1972}
U. Bhat and S.~Subba Rao.
\newblock {A statistical technique for the control of traffic intensity in the
 queuing systems M/G/1 and GI/M/1}.
\newblock {\em Operations Research}, {\bf 20}, 955--966, 1972.

\bibitem{BR1987}
U. Bhat and S.~Subba Rao.
\newblock Statistical analysis of queueing systems.
\newblock {\em Queueing Systems}, {\bf 1}, 217--247, 1987.

\bibitem{book_Billingsley1999}
P.~Billingsley.
\newblock {\em {Convergence of Probability Measures}}.
\newblock Wiley, Chichester, UK, 1999.


\bibitem{book_DM2015}
K.~D\k{e}bicki and M.~Mandjes.
\newblock {\em Queues and L{\'e}vy Fluctuation Theory}.
\newblock Springer, New York, NY, USA, 2015.


\bibitem{GMW1993}
P. Glynn, B.~Melamed, and W.~Whitt.
\newblock Estimating customer and time averages.
\newblock {\em Operations Research}, {\bf 41}, 400--408, 1993.

\bibitem{KBM2006}
O.~Kella, O.~Boxma, and M.~Mandjes.
\newblock {A L{\'e}vy process reflected at a Poisson age process}.
\newblock {\em Journal of Applied Probability}, {\bf 43}, 221--230, 2006.

\bibitem{KOR} D.~Korshunov.
\newblock{On distribution tail of the maximum of a random walk}.
\newblock{\em Stochastic Processes and their Applications},
{\bf 72}, 97--103, 1997. 



\bibitem{book_MT2009}
S.~Meyn and R. Tweedie.
\newblock {\em Markov Chains and Stochastic Stability}.
\newblock Cambridge University Press, 2nd Edition, Cambridge, UK, 2009.

\bibitem{RBH1984}
S.~Subba Rao, U.~Bhat, and K.~Harishchandra.
\newblock {Control of traffic intensity in a queue--a method based on SPRT}.
\newblock {\em Opsearch}, {\bf 21}, 63--80, 1984.

\bibitem{RBM2019}
L.~Ravner, O.~Boxma, and M.~Mandjes.
\newblock {Estimating the input of a L\'evy-driven queue by Poisson sampling of
 the workload process}.
\newblock {\em Bernoulli}, {\bf 25}, 3734--3761, 2019.

\bibitem{book_S2013}
D.~Siegmund.
\newblock {\em Sequential analysis: tests and confidence intervals}.
\newblock Springer Science \& Business Media, 2013.

\bibitem{RH1986}
S.~Subba~Rao and K.~Harishchandra.
\newblock {On a large sample test for the traffic intensity in GI|G|$s$ queue}.
\newblock {\em Naval Research Logistics Quarterly}, {\bf 33}, 545--550, 1986.

\bibitem{SBM2016}
N. Starreveld, R. Bekker, and M. Mandjes. Transient analysis of one-sided L\'evy-driven queues. {\it Stochastic Models}, {\bf 32}, 481--512, 2016.

\bibitem{TT1979}
P.~Tuominen and R. Tweedie.
\newblock {Exponential ergodicity in Markovian queueing and dam models}.
\newblock {\em Journal of Applied Probability}, {\bf 16}, 867--880, 1979.

\bibitem{book_Y2013}
B.~Yakir.
\newblock {\em Extremes in Random Fields: A Theory and its Applications}.
\newblock Wiley, Chichester, UK, 2013.

\end{thebibliography}
\small{
}

\end{document}